\documentclass[12pt,reqno]{amsart}

\usepackage[a4paper,margin=1in]{geometry}
\usepackage{amsmath,amssymb,amsthm,mathtools,bm, mathrsfs}
\usepackage{microtype}
\usepackage{enumitem}
\usepackage{booktabs}
\usepackage[hidelinks]{hyperref}
\usepackage[nameinlink,noabbrev]{cleveref}
\usepackage{bbm}

\newtheorem{theorem}{Theorem}[section]
\newtheorem{proposition}[theorem]{Proposition}
\newtheorem{lemma}[theorem]{Lemma}
\newtheorem{corollary}[theorem]{Corollary}
\newtheorem{remark}[theorem]{Remark}
\newtheorem{definition}[theorem]{Definition}
\newtheorem{example}[theorem]{Example}
\newtheorem*{ac}{Acknowledgement}

\newcommand{\bC}{\mathbb{C}}

\newcommand{\bR}{\mathbb{R}}
\newcommand{\cL}{\mathcal{L}}

\newcommand{\cH}{\mathcal{H}}
\newcommand{\cS}{\mathcal{S}}
\newcommand{\cG}{\mathcal{G}}
\newcommand{\cJ}{\mathcal{J}}
\newcommand{\bE}{\mathbb{E}}
\newcommand{\cI}{\mathcal{I}}
\newcommand{\sP}{\mathscr{P}}
\newcommand{\Tr}{\mathrm{Tr}}
\newcommand{\Id}{\mathrm{Id}}

\newcommand{\bfM}{\mathbf{M}}

\newcommand{\KMS}{\mathrm{KMS}}

\newcommand{\mix}{\mathrm{mix}}
\newcommand{\diam}{\mathrm{diam}}
\newcommand{\Ent}{\mathrm{Ent}}

\allowdisplaybreaks[2]
\renewcommand{\Re}{\operatorname{Re}}
\renewcommand{\Im}{\operatorname{Im}}

\title[Quantum Cheeger Inequalities]{Quantum Cheeger Inequalities for KMS-Symmetric Quantum Markov Semigroups}

\author{Jincheng Wan}
\address{Jincheng Wan, Tsinghua University, Beijing}
\email{wanjc23@mails.tsinghua.edu.cn }

\author{Jinsong Wu}
\address{Jinsong Wu, Beijing Institute of Mathematical Sciences and Applications, Beijing, 101408, China}
\email{wjs@bimsa.cn}

\date{}

\begin{document}
	
	\begin{abstract}
		In this paper, we establish a quantum Cheeger inequality for primitive KMS-symmetric quantum Markov semigroups in terms of projection conductance. 
		We discuss both projection conductance and classical conductance for graph-based KMS-symmetric quantum Markov semigroups. 
		We show that hypercontractivity and the logarithmic Sobolev inequality hold for primitive KMS-symmetric quantum Markov semigroups. 
		We also present applications of the quantum Cheeger inequality to logarithmic Sobolev inequalities, hypercontractivity, and complete modified logarithmic Sobolev inequalities.
	\end{abstract}
	
	\maketitle
	
	\section{Introduction}
	
	In 1970, Cheeger \cite{Cheeger70} obtained a lower bound on the second eigenvalue of the Laplacian on a compact Riemannian manifold in terms of the manifold’s isoperimetric constant.
	This result is known as the Cheeger inequality. 
	Alon \cite{Alon86} proved a Cheeger inequality for the Laplacian of a finite graph. 
	Lawler and Sokal \cite{LawSok88} extended Cheeger-type inequalities to general Markov chains and Markovian jump processes.
	
	Quantum Markov semigroups are noncommutative generalizations of classical Markov chains. 
	The quantum counterparts of reversibility for classical Markov chains are GNS symmetry and KMS symmetry. 
	In this paper, we focus on primitive KMS-symmetric quantum Markov semigroups. 
	The generators and Dirichlet forms of KMS-symmetric quantum Markov semigroups have been investigated by Kossakowski, Frigerio, Gorini, and Verri \cite{KFGV77}, Fagnola and Umanità \cite{FagUma10}, Vernooij and Wirth \cite{VerWir23}, Wirth \cite{Wir26}, among others. 
	A quantum analogue of the Cheeger inequality for quantum channels was obtained by Temme, Kastoryano, Ruskai, Wolf, and Verstraete \cite{TKR10}. 
	In this paper, we obtain a quantum analogue of the Cheeger inequality for primitive KMS-symmetric quantum Markov semigroups. 
	We also find applications to characterizing the mixing time and the diameter of the state space with respect to Connes’ distance.
        Quantum Cheeger inequality characterizes the bound of the spectral gap in terms of conductance.
    For primitive KMS-symmetric quantum Markov semigroups, spectral-gap
estimates yield quantitative bounds for logarithmic Sobolev inequalities
and hypercontractivity. Under the additional Poissonization assumption,
they also yield bounds for the complete modified logarithmic Sobolev
inequality.
	
	The hypercontractivity and logarithmic Sobolev inequality for primitive quantum Markov semigroups were first formalized by Olkiewicz and Zegarlinski \cite{OlkZeg99}, who also established their equivalence. 
	Kastoryano and Temme \cite{KasTem13} developed quantum logarithmic Sobolev inequalities and their applications to rapid mixing. 
	This was later investigated by Carbone and Martinelli \cite{CarMar15} and by Temme, Pastawski, and Kastoryano \cite{TPK14}.
	
	Beigi, Datta, and Rouzé \cite{BDR20} proved hypercontractivity and quantum reverse hypercontractivity for GNS-symmetric quantum Markov semigroups via the quantum Stroock--\allowbreak Varopoulos inequality. 
	Bardet and Rouzé \cite{BarRou22} developed weak decoherence-free logarithmic Sobolev inequalities and hypercontractivity in amalgamated $L^p$ spaces. Their extension of Gross' integration lemma relates these properties, with an additional weak constant in the reverse implication depending on the center of the decoherence-free algebra. They also showed that the strong versions, without a defect term, fail in this framework for semigroups that are neither primitive nor unitary. 
	In this paper, we present a direct finite-dimensional proof of hypercontractivity for primitive KMS-symmetric quantum Markov semigroups, without an additional $L^p$-regularity assumption or the quantum Stroock--\allowbreak Varopoulos inequality.
	
	Modified logarithmic Sobolev inequalities (MLSI) and complete modified logarithmic Sobolev inequalities (CMLSI) have been studied intensively in \cite{BGJ22,BGJ23,GaoRou22,CarMaa17,GJLL25,LWW26}. In this paper, we show that primitive KMS-symmetric quantum Markov semigroups with Poissonized generators satisfy CMLSI and obtain a bound in terms of the projection conductance.

	The paper is organized as follows.
	In Section 2, we recall fundamentals of KMS-symmetric quantum Markov semigroups.
	In Section 3, we prove the quantum Cheeger inequality by introducing projection conductance.
	In Section 4, we compare the projection conductance and graph conductance in the graph-based KMS-symmetric quantum Markov semigroups.
	In Section 5, we introduce the graph-based KMS-symmetric quantum Markov semigroups with nonzero Hamiltonian for further study.
	In Section 6, we show that the complete modified logarithmic Sobolev inequality holds for Poissonized KMS-symmetric quantum Markov semigroups.
	In Section 7, we show that hypercontractivity and logarithmic Sobolev inequalities hold for primitive KMS-symmetric quantum Markov semigroups.
	
	\begin{ac}
		The paper is based on the numerous communications with ChatGPT Pro 5.6 and 6.
		J. Wu was supported by grants from Beijing Institute of Mathematical Sciences and Applications.
		J.~Wu was supported by NSFC (Grant no. 12371124). 
	\end{ac}

	\section{Preliminaries}
    Throughout the paper, we assume that $n\ge2$.
    
	In this section, we shall briefly recall the notation for KMS-symmetric quantum Markov semigroups on matrix algebras.
	
	Let $M_n(\bC)$ denote the algebra of $n\times n$ complex matrices, equipped with the unnormalized trace $\Tr$.
	Let $\sP$ be the space of all projections in $M_n(\bC)$ and $\cS(M_n(\bC))$ the space of density matrices in $M_n(\bC)$.
	We also denote $\cS(M_n(\bC))$ by $\cS(M_n)$ for simplicity.
	The Hilbert-Schmidt inner product $\langle \cdot, \cdot\rangle_{\mathrm{HS}}$ on $M_n(\bC)$ is defined as $\langle X,Y\rangle_{\mathrm{HS}}=\Tr(X^*Y)$ for $X, Y\in M_n(\bC)$. In the following, $\langle\cdot,\cdot\rangle$ denotes the Hilbert--Schmidt
inner product $\langle\cdot,\cdot\rangle_{\mathrm{HS}}$ unless otherwise specified.
	Suppose that $\rho$ is a faithful state on $M_n(\bC)$ with density matrix $D_\rho$.
	The KMS inner product $\langle\cdot, \cdot\rangle_{\KMS, \rho}$ is defined as 
	\begin{align*}
		\langle X, Y\rangle_{\KMS,\rho}= \Tr(X^*D_\rho^{1/2} Y D_\rho^{1/2}), \quad X, Y\in M_n(\bC).
	\end{align*}
	Let $s_-$ and $s_+$ denote the smallest and largest eigenvalues of $D_\rho$, respectively.
	Then for any $X\in M_n(\bC)$, we have that 
	\begin{align*}
		s_+^{-1}\|X\|^2_{\KMS, \rho} \leq   \|X\|^2_{\mathrm{HS}} \leq s_-^{-1}\|X\|^2_{\KMS, \rho}.
	\end{align*}
	For a linear map $\cL$ on $M_n(\bC)$, its KMS-norm is given by
	$\displaystyle \|\cL\|_{\KMS,\rho}=\sup_{X\neq 0}\frac{\|\cL(X)\|_{\KMS, \rho}}{\|X\|_{\KMS,\rho}}$

	Suppose that $\{\Phi_t\}_{t\geq 0}$ is a quantum Markov semigroup on the matrix algebra $M_n(\bC)$ with unnormalized trace $\Tr$.
	Let $\cL$ be the Lindbladian of $\{\Phi_t\}_{t\geq 0}$ such that $\Phi_t=e^{-t\cL}$.
	The GKSL representation \cite{GKS76,Lin76} of the generator is as follows:
	\begin{align}\label{eq:gksl}
		\cL(X)=i[H, X] +\sum_{j=1}^m \frac{1}{2}\{V_j^*V_j, X\}-\sum_{j=1}^m V_j^*XV_j,
	\end{align}
	where $H\in M_n(\bC)$ is Hermitian and $V_j\in M_n(\bC)$.
	A quantum Markov semigroup $\{\Phi_t\}_{t\geq 0}$ is called
	primitive if it admits a faithful invariant state and
	$\ker(\cL)=\bC I$. Equivalently, it has a unique invariant state
	$\rho$, which is faithful and satisfies $\rho\Phi_t=\rho$
	for all $t\geq 0$.
	
	A quantum Markov semigroup $\{\Phi_t\}_{t\geq 0}$ is KMS-symmetric with respect to a faithful state $\rho$ if for all $t\geq 0$ and $X,Y\in M_n(\bC)$,
	\begin{align}
		\Tr(X^* D_{\rho}^{1/2}\Phi_t(Y)D_\rho^{1/2})=\Tr(\Phi_t(X)^* D_{\rho}^{1/2}YD_\rho^{1/2}),
	\end{align}
	where $D_\rho$ is the density matrix of $\rho$, i.e. $\rho(\cdot)=\Tr(D_\rho \cdot)$.
	This implies that $\{\Phi_t\}_{t\geq 0}$ is in equilibrium with respect to $\rho$.
	For a quantum Markov semigroup admitting a faithful invariant state,
	we denote by $\bE_\Phi$ its ergodic projection onto $\ker\cL$:
	\[
	\bE_\Phi(X)
	=\lim_{T\to\infty}\frac1T\int_0^T\Phi_t(X)\,dt,
	\qquad X\in M_n(\bC).
	\]
	In finite dimensions, this limit exists in norm.
	If the semigroup is KMS-symmetric, then
	\[
	\bE_\Phi=\lim_{t\to\infty}\Phi_t.
	\]
	By Theorem 2.5 in \cite{VerWir23}, there exist matrices $W_1,\ldots,W_m\in M_n(\bC)$ such that  
	\begin{align}\label{eq:derivation}
		\langle X, \cL(Y)\rangle_{\KMS, \rho} =\sum_{j=1}^m \left\langle [W_j, X], [W_j, Y] \right\rangle_{\KMS,\rho}.
	\end{align}
We may choose the indexed family $\{W_j\}_{j=1}^m$ to be invariant under taking adjoints.
Since the KMS norm is invariant under taking adjoints and $\cL$ is $*$-preserving and KMS-symmetric,
$\langle X^*,\cL(X^*)\rangle_{\KMS,\rho}=\langle X,\cL(X)\rangle_{\KMS,\rho}$.
Thus the replacement $\widetilde{W}_j=\frac{W_j}{\sqrt{2}},\; \widetilde{W}_{m+j}=\frac{W_j^*}{\sqrt{2}},\; 1 \le j \le m$ preserves the quadratic form in \eqref{eq:derivation}, and polarization preserves the full identity.
	The Fisher information $\cI_{\cL}$ of the semigroup is
	\begin{align*}
		\cI_{\cL}(D)=\Tr(\cL^*(D)(\log D-\log \bE_{\Phi}^*(D))), 
	\end{align*}
	where $D$ is a faithful density matrix in $M_n(\bC)$ and $\cL^*$ is the adjoint of $\cL$ with respect to $\Tr$.
	When $\{\Phi_t\}_{t\geq 0}$ is primitive, we have that $\cI_{\cL}(D)=\Tr(\cL^*(D)(\log D-\log D_\rho))$.
	The relative entropy $H(D\| D_\rho)$ is $\Tr(D(\log D-\log D_\rho))$.
	For any faithful density matrices $D_1,D_2$, the integral representation
	of quantum relative entropy gives
	\begin{equation}\label{eq:relative-entropy-integral}
		H(D_1\Vert D_2)
		=
		\int_0^1(1-t)\,
		\left\langle
		D_1-D_2,\,
		\mathbf{K}_{(1-t)D_2+tD_1}^{-1}(D_1-D_2)
		\right\rangle\,dt.
	\end{equation}
	The proof can be found in Lemma 2.2 of \cite{GaoRou22}. 
	Recall that for any positive definite matrix $D\in M_n(\bC)$, the logarithmic-mean operator $\mathbf{K}_D$ on $M_n(\bC)$ is defined as 
	\[\mathbf{K}_D(X)=\int_0^1 D^s X D^{1-s}\; ds,\quad X \in M_n(\bC).\]
	Lemma 2.1 in \cite{GaoRou22} yields
	\begin{equation}\label{eq: K monotone}
		0<D_1\leq cD_2
		\quad\Longrightarrow\quad
		\left<X,\mathbf{K}_{D_1}^{-1}X \right> \geq c^{-1} \left<X, \mathbf{K}_{D_2}^{-1} X \right>.
	\end{equation}

	Let \(\{\Phi_t\}_{t\geq 0}\) be a quantum Markov semigroup admitting a faithful
	invariant state, with ergodic projection \(\mathbb{E}_{\Phi}\). We say that
	\(\{\Phi_t\}\) satisfies a \emph{modified logarithmic Sobolev inequality}
	(MLSI) if there exists \(\beta>0\) such that, for every density matrix \(D\)
	and every \(t\geq0\),
	\begin{align}\label{eq:mlsi}
		H(\Phi_t^*(D) \| \mathbb{E}_{\Phi}^*(D))
		\leq e^{-2\beta t} H(D \| \mathbb{E}_{\Phi}^*(D)),
	\end{align}
	equivalently, for every faithful density matrix \(D\),
	\begin{align}\label{eq:mlsi2}
		\mathcal{I}_{\mathcal{L}}(D)
		\geq 2\beta H(D \| \mathbb{E}_{\Phi}^*(D)).
	\end{align}
	In the primitive case, inequalities \eqref{eq:mlsi} and \eqref{eq:mlsi2}
	reduce to
	\[
	H(\Phi_t^*(D)\|D_\rho)\leq e^{-2\beta t}H(D\|D_\rho),
	\qquad
	\mathcal{I}_{\mathcal{L}}(D)\geq 2\beta H(D\|D_\rho).
	\]
	We denote by \(\alpha_{\mathrm{MLSI}}\) the optimal constant in inequalities
	\eqref{eq:mlsi} and \eqref{eq:mlsi2}.
	
	For \(\beta\geq0\), the semigroup satisfies
	\(\operatorname{CMLSI}(\beta)\) if, for every \(m\in\mathbb{N}\) and every
	faithful density matrix \(D\) on \(M_n\otimes M_m\),
	\begin{equation}\label{eq:cmlsi}
		\mathcal{I}_{\mathcal{L}\otimes\operatorname{Id}_m}(D)
		\geq 2\beta H\bigl(D\|(\mathbb{E}_\Phi^* \otimes \operatorname{Id}_m )(D)\bigr).
	\end{equation}
	If the semigroup is primitive with invariant density matrix \(D_\rho\), then
	\(\mathbb{E}_\Phi^*(X)=\operatorname{Tr}(X)D_\rho\). Consequently,
	\[
	(\mathbb{E}_\Phi^*\otimes\operatorname{Id}_m)(D)=D_\rho\otimes D_R,
	\qquad D_R=(\operatorname{Tr}_{M_n}\otimes\Id_m)(D).
	\]
	We denote by \(\alpha_{\mathrm{CMLSI}}(\mathcal{L})\) the supremum of all
	\(\beta\geq0\) for which \eqref{eq:cmlsi} holds for every \(m\in\mathbb{N}\)
	and every faithful density matrix \(D\in M_n\otimes M_m\).

	\section{Quantum Cheeger Inequality}
	
	In this section, we obtain a quantum Cheeger inequality for primitive KMS-symmetric quantum Markov semigroups.
	
	\begin{definition}[Projection Dirichlet Conductance]
		Suppose that $\{\Phi_t\}_{t\geq 0}$ is a primitive KMS-symmetric quantum Markov semigroup.
		The projection Dirichlet conductance is defined to be
		\begin{align*}
			h_\rho(\cL):=\inf_{0, I\neq P \in \sP}\frac{\langle P, \cL(P)\rangle_{\KMS,\rho}}{\min\{\rho(P), 1-\rho(P)\}}
		\end{align*}
	\end{definition}
	By primitivity and compactness of the set of nonzero proper projections, we have $h_\rho(\cL)>0$.
	
	\begin{remark}
		In the tracial case, with $\cL=\Id-S$ and $S$ the symmetrized quantum channel considered in \cite{TKR10}, this definition reduces to the projection conductance used there. 
		In the present setting, the generator $\cL$ is naturally interpreted as a Laplacian in the quantum framework: 
		it is the infinitesimal generator of a primitive KMS-symmetric quantum Markov semigroup, and its associated Dirichlet form plays the role of the noncommutative analogue of the classical Dirichlet energy. 
		Accordingly, projection conductance may be viewed as a quantum generalization of the classical Cheeger/isoperimetric constant, capturing the bottleneck behaviour of the semigroup through projections.
	\end{remark}

	Recall that the spectral gap $\lambda_{\KMS}$ of the semigroup is 
	\begin{align*}
		\lambda_{\KMS}=\inf_{\substack{0\neq X=X^*,\\ \rho(X)=0}}\frac{\langle X, \cL(X)\rangle_{\KMS, \rho}}{\|X\|_{\KMS,\rho}^2}.
	\end{align*}

	\begin{proposition}\label{prop:lowerbd}
		Suppose that $\{\Phi_t\}_{t\geq 0}$ is a primitive KMS-symmetric quantum Markov semigroup.
		We have that
		\begin{align*}
			\frac{h_\rho(\cL)^2s_-^2}{2s_+^2\|\cL\|_{\KMS, \rho}} \leq \lambda_{\KMS}.
		\end{align*}
	\end{proposition}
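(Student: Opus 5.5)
The plan is to adapt the co-area (level-set) argument behind the classical Cheeger inequality, as in \cite{TKR10}. The level sets are replaced by spectral projections of a spectral-gap minimiser, and the Dirichlet form is expressed through the commutator representation \eqref{eq:derivation}. Because the KMS weights $D_\rho^{1/2}$ are not diagonal in the eigenbasis of the minimiser, I will move between KMS and Hilbert--Schmidt norms. I expect this change of norm to produce the factor $s_-^2/s_+^2$ in the statement.

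\textbf{Step 1 (median shift and positive part).} Let $X=X^*$ with $\rho(X)=0$ attain $\lambda_{\KMS}$. Choose $c\in\bR$ such that both spectral projections $\mathbf 1_{(c,\infty)}(X)$ and $\mathbf 1_{(-\infty,c)}(X)$ have $\rho$-weight at most $1/2$, and put $Y=X-cI$. Since $\cL(I)=0$, we have $\langle Y,\cL Y\rangle_{\KMS,\rho}=\langle X,\cL X\rangle_{\KMS,\rho}$. Since $\rho(X)=0$, we also have $\|Y\|^2_{\KMS,\rho}\ge\|X\|^2_{\KMS,\rho}$. Write $Y=Y_+-Y_-$ with spectral decomposition $Y=\sum_a y_a E_a$.

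In the eigenbasis of $Y$, the Hilbert--Schmidt commutator energy is
\[
\sum_j\|[W_j,Y]\|_{\mathrm{HS}}^2=\sum_{a,b}c_{ab}(y_a-y_b)^2,\qquad c_{ab}=\sum_j\|E_aW_jE_b\|_{\mathrm{HS}}^2 .
\]
Because $|y_a^+-y_b^+|\le|y_a-y_b|$, the contributions of $Y_+$ and $Y_-$ are controlled by that of $Y$. Converting between KMS and Hilbert--Schmidt norms costs one factor $s_+/s_-$.

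\textbf{Step 2 (co-area).} Set $Z=Y_+^2=\int_0^\infty P_t\,dt$ with $P_t=\mathbf 1_{(\sqrt t,\infty)}(Y)$. By the choice of $c$, each nonzero $P_t$ satisfies $\rho(P_t)\le 1/2$. The definition of $h_\rho(\cL)$ therefore gives
\[
\int_0^\infty\langle P_t,\cL P_t\rangle_{\KMS,\rho}\,dt\;\ge\;h_\rho(\cL)\int_0^\infty\rho(P_t)\,dt=h_\rho(\cL)\,\rho(Y_+^2).
\]
For the upper bound, I dominate $\langle P_t,\cL P_t\rangle_{\KMS,\rho}\le s_+\sum_j\|[W_j,P_t]\|_{\mathrm{HS}}^2=s_+\sum_{y_a>\sqrt t\ge y_b}(c_{ab}+c_{ba})$. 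Integrating in $t$ then gives the bound $s_+\sum_{a,b}c_{ab}\,|(y_a^+)^2-(y_b^+)^2|$.

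\textbf{Step 3 (Cauchy--Schwarz).} Factor $|(y_a^+)^2-(y_b^+)^2|=|y_a^+-y_b^+|\,(y_a^++y_b^+)$ and apply Cauchy--Schwarz. This bounds the right-hand side of Step 2 by
\[
s_+\Bigl(\sum c_{ab}(y^+_a-y^+_b)^2\Bigr)^{1/2}\Bigl(\sum c_{ab}(y^+_a+y^+_b)^2\Bigr)^{1/2}.
\]
The first factor is at most $\bigl(s_-^{-1}\langle Y_+,\cL Y_+\rangle_{\KMS,\rho}\bigr)^{1/2}$. The second factor is at most $\bigl(2\sum_j\|W_jY_+\|^2+\|Y_+W_j\|^2\bigr)^{1/2}$, which I bound by $\bigl(2s_-^{-1}\|\cL\|_{\KMS,\rho}\|Y_+\|^2_{\KMS,\rho}\bigr)^{1/2}$. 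I would obtain this by testing $\langle Z,\cL Z\rangle_{\KMS,\rho}\le\|\cL\|_{\KMS,\rho}\|Z\|^2_{\KMS,\rho}$ on suitable $Z$, using the adjoint-invariance of $\{W_j\}$ arranged after \eqref{eq:derivation}. Finally I compare $\rho(Y_+^2)$ with $\|Y_+\|^2_{\KMS,\rho}$, which costs another factor involving $s_\pm$. Squaring gives $h^2\|Y_+\|^2\lesssim 2\|\cL\|\,\langle Y_+,\cL Y_+\rangle$. Doing the same for $Y_-$ and adding, using that $Y_+$ and $Y_-$ have orthogonal supports, yields $h_\rho(\cL)^2 s_-^2\le 2s_+^2\|\cL\|_{\KMS,\rho}\lambda_{\KMS}$.

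\textbf{Main obstacle.} The delicate part is the bookkeeping of the $s_\pm$ factors, so that exactly $s_-^2/s_+^2$ results. In particular, the bound on $\sum c_{ab}(y_a+y_b)^2$ by $2\|\cL\|_{\KMS,\rho}$ times a norm of $Y_+$ must be done carefully, since the commutator decomposition mixes left and right multiplication by $W_j$. The first thing I would try is to check that $\sum_j(W_j^*W_j)$, acting on both sides, is dominated by $\|\cL\|$ through the Lindblad form \eqref{eq:gksl}.
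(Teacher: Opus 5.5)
Your strategy is the paper's proof with the classical Cheeger inequality proved inline rather than quoted. The paper forms the reversible chain on a rank-one spectral resolution $X=\sum_j x_jF_j$, with weights $w_{jk}=s_-\sum_\ell(\|F_jW_\ell F_k\|^2_{\mathrm{HS}}+\|F_kW_\ell F_j\|^2_{\mathrm{HS}})$. It proves $h_A\ge s_-h_\rho(\cL)/s_+$ (this is your Step 2) and $d^A_{\max}\le\|\cL\|_{\KMS,\rho}$, and then applies $\lambda_A\ge h_A^2/(2d^A_{\max})$.

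The gap is in your Step 3, which is exactly where the degree bound belongs. The inequality $\sum_j(\|W_jY_+\|^2_{\mathrm{HS}}+\|Y_+W_j\|^2_{\mathrm{HS}})\le s_-^{-1}\|\cL\|_{\KMS,\rho}\|Y_+\|^2_{\KMS,\rho}$ cannot be derived from $\cL$, and your route via $\sum_jW_j^*W_j$ and \eqref{eq:gksl} fails. Replace $W_j$ by $W_j+\mu I$ and its adjoint partner by $W_j^*+\bar\mu I$. The family stays adjoint-invariant and every commutator is unchanged, so \eqref{eq:derivation} and $\cL$ are unchanged, but the left side grows like $|\mu|^2$. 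The offending terms are the diagonal blocks $E_aW_jE_a$, which $\cL$ never sees. Only pairs $a\neq b$ arise in Step 2 (from $y_a>\sqrt t\ge y_b$), so you should instead bound
\[
\sum_{a\neq b}c_{ab}(y_a^++y_b^+)^2\le 2\sum_a(y_a^+)^2\sum_{b\neq a}(c_{ab}+c_{ba})=2\sum_a(y_a^+)^2\sum_j\|[W_j,E_a]\|^2_{\mathrm{HS}}.
\]
Then test $\cL$ on $Z=E_a$:
\[
\sum_j\|[W_j,E_a]\|^2_{\mathrm{HS}}\le s_-^{-1}\langle E_a,\cL(E_a)\rangle_{\KMS,\rho}\le s_-^{-1}\|\cL\|_{\KMS,\rho}\|E_a\|^2_{\KMS,\rho}\le s_-^{-1}\|\cL\|_{\KMS,\rho}\,\rho(E_a),
\]
where the last step uses $\|E_a\|^2_{\KMS,\rho}=\|E_aD_\rho^{1/2}E_a\|^2_{\mathrm{HS}}\le\rho(E_a)$. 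The second Cauchy--Schwarz factor is therefore at most $(2s_-^{-1}\|\cL\|_{\KMS,\rho}\,\rho(Y_+^2))^{1/2}$. This is the paper's bound $d^A_{\max}\le\|\cL\|_{\KMS,\rho}$.

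The end of your argument also needs repair to reach the stated constant. Do not convert to $\langle Y_\pm,\cL(Y_\pm)\rangle_{\KMS,\rho}$ before adding. The weight $D_\rho^{1/2}$ mixes the spectral subspaces of $Y$, so nothing forces $\langle Y_+,\cL(Y_+)\rangle_{\KMS,\rho}+\langle Y_-,\cL(Y_-)\rangle_{\KMS,\rho}\le\langle Y,\cL(Y)\rangle_{\KMS,\rho}$. Instead, add at the Hilbert--Schmidt level using $(y_a^+-y_b^+)^2+(y_a^--y_b^-)^2\le(y_a-y_b)^2$. The pointwise bound $|y_a^+-y_b^+|\le|y_a-y_b|$ alone loses a factor $2$. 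Then convert once, via $\sum_j\|[W_j,Y]\|^2_{\mathrm{HS}}\le s_-^{-1}\langle X,\cL(X)\rangle_{\KMS,\rho}$.

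The norm comparison costs nothing. Keep $\rho(Y_\pm^2)$ throughout and finish with $\rho(Y_+^2)+\rho(Y_-^2)=\rho(Y^2)=\rho(X^2)+c^2\ge\|X\|^2_{\KMS,\rho}$. This yields $h_\rho(\cL)^2\|X\|^2_{\KMS,\rho}\le 2s_+^2s_-^{-2}\|\cL\|_{\KMS,\rho}\langle X,\cL(X)\rangle_{\KMS,\rho}$, which is exactly the claim. Any further $s_\pm$ loss at the comparison step, as you anticipate, would fall short of it.
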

	\begin{proof}
		Suppose that $\displaystyle X=\sum_{j=1}^n x_jF_j$ with $\rho(X)=0$, where $\{F_j\}_{j=1}^n$ is an orthogonal family of minimal projections in $M_n(\bC)$ and $x_j\in \bR$.  
		Let $y_j=\rho(F_j)$.
		Then $y_j >0$ and $\displaystyle \sum_{j=1}^n x_j y_j=0$.
		
		Let 
		\begin{align*}
			w_{jk} =s_-\sum_{\ell=1}^m (\|F_jW_\ell F_{k}\|^2+ \|F_k W_\ell F_{j}\|^2),  \quad j \neq k.
		\end{align*}
		We define a classical  Markov generator $A$ on $\ell^2_n$ reversible with respect to $\vec{y}=(y_1, \ldots, y_n)$ as follows
		\begin{align*}
			(A \vec{t})_{j}=\frac{1}{y_j}\sum_{k\neq j} w_{jk} (t_j-t_k),
		\end{align*}
		where $(A \vec{t})_{j}$ is the $j$-th entry of the vector $A\vec{t}$.
		
		Let  
		\[
		h_A :=
		\min_{\varnothing \neq S \subsetneq \{1,\ldots,n\}}
		\frac{ \sum_{j\in S, \, k\notin S} w_{jk} }{\min\{\rho(F_S), 1-\rho(F_S)\}},
		\qquad
		F_S := \sum_{j\in S} F_j.
		\]
		Note that $h_A$ is the classical conductance.
		Then by Equation \eqref{eq:derivation}, we have that 
		\begin{align*}
			h_\rho(\cL)\leq & \frac{\langle F_S, \cL(F_S)\rangle_{\KMS, \rho}}{\min\{\rho(F_S), 1-\rho(F_S)\}}
			= \frac{\sum_{j=1}^m \langle [W_j, F_S], [W_j, F_S] \rangle_{\KMS,\rho} }{\min\{\rho(F_S), 1-\rho(F_S)\}} \\
			\leq & \frac{s_+ s_-^{-1} \sum_{j\in S, \, k\notin S} w_{jk} }{\min\{\rho(F_S), 1-\rho(F_S)\}}.
		\end{align*}
		Taking the minimum over all nonempty proper subsets $S$ yields
		\begin{align}\label{eq:classical}
			h_A \geq \frac{s_- h_\rho(\cL)}{s_+}.
		\end{align}
		
		Let $\lambda_A$ be the spectral gap of the classical Markov chain.
		The classical Cheeger inequality states that 
		\begin{align}\label{eq: classical Cheeger ineq}
			\frac{h_A^2}{2d_{\max}^A}\leq  \lambda_A \leq 2h_A,
		\end{align}
		where $\displaystyle d_{\max}^A=\max_{1\leq j\leq n}\{d_j^A\}$ and $\displaystyle d_j^A=\frac{\sum_{k\neq j} w_{jk}}{y_j}$.
		We have that 
		\begin{align*}
			d_j^A=& \frac{s_- \sum_{\ell=1}^m \|[W_\ell,F_j ]\|_{HS}^2}{\rho(F_j)}
			\leq \frac{ \sum_{\ell=1}^m \|[W_\ell,F_j ]\|_{\KMS, \rho}^2}{\rho(F_j)} \\
			= & \frac{ \langle F_j, \cL(F_j)\rangle_{\KMS, \rho}}{\rho(F_j)}\leq \|\cL\|_{\KMS, \rho} \frac{\| F_j\|_{\KMS,\rho}^2}{\rho(F_j)} \leq \|\cL\|_{\KMS, \rho}.
		\end{align*}
		This implies that $d_{\max}^A \leq \|\cL\|_{\KMS, \rho}$.
		We have that 
		\begin{align}\label{eq:lowerbdcl}
			\lambda_A \geq \frac{s_-^2 h_\rho(\cL)^2}{2s_+^2\|\cL\|_{\KMS, \rho}}.
		\end{align}
		Now we have that 
		\begin{align*}
			\langle X, \cL(X)\rangle_{\KMS, \rho} 
			\geq & s_-\sum_{\ell=1}^m \left\|[W_\ell,X ]\right\|_{HS}^2
			=\sum_{j<k} w_{jk}(x_j-x_k)^2 \\
			\geq & \lambda_A \sum_{j=1}^n y_j x_j^2
			\geq \frac{s_-^2 h_\rho(\cL)^2}{2s_+^2\|\cL\|_{\KMS, \rho}} \Tr(D_\rho X^2)\quad \text{ Equation \eqref{eq:lowerbdcl}}\\
			\geq & \frac{s_-^2 h_\rho(\cL)^2}{2s_+^2\|\cL\|_{\KMS, \rho}} \|X\|_{\KMS, \rho}^2
		\end{align*}
		This proves the proposition.
	\end{proof}
	
	\begin{remark}
		It is worth noting that the derivation matrices $W_\ell$ do not affect the lower bound of the spectral gap $\lambda_{\KMS}$ in the proof of Proposition \ref{prop:lowerbd}.     
	\end{remark}

	In the following, we shall obtain an upper bound for the spectral gap of $\cL$ in terms of the projection conductance.
	\begin{proposition}\label{prop:upperbd}
		We have that 
		\begin{align}\label{eq:cheegerupper}
			\lambda_{\KMS} \leq \left(\sqrt{\frac{s_-}{s_+}}+ \sqrt{\frac{s_+}{s_-}}\right) h_{\rho}(\cL).
		\end{align}
		Moreover, the upper bound is sharp.
	\end{proposition}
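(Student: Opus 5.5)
The plan is to test the variational formula for $\lambda_{\KMS}$ on a centred projection. Fix a nonzero proper projection $P$ and set $X=P-\rho(P)I$, which is self-adjoint with $\rho(X)=0$. Since $\ker\cL=\bC I$ and $\cL$ is KMS-symmetric, $\cL(I)=0$ and $\langle I,\cL(P)\rangle_{\KMS,\rho}=\langle \cL(I),P\rangle_{\KMS,\rho}=0$. Hence
\[
\langle X,\cL(X)\rangle_{\KMS,\rho}=\langle P,\cL(P)\rangle_{\KMS,\rho}.
\]
It therefore suffices to bound $\|X\|_{\KMS,\rho}^2$ from below by a multiple of $\rho(P)(1-\rho(P))$.

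\textbf{Step 1: KMS variance versus GNS-type variance.} Write $D_\rho=\sum_a \mu_a E_a$ in an eigenbasis and expand $X$ in matrix units $X_{ab}$. Then
\[
\|X\|_{\KMS,\rho}^2=\sum_{a,b}\sqrt{\mu_a\mu_b}\,|X_{ab}|^2,
\qquad
\tfrac12\bigl(\Tr(X^*D_\rho X)+\Tr(XD_\rho X^*)\bigr)=\sum_{a,b}\tfrac{\mu_a+\mu_b}{2}|X_{ab}|^2 .
\]
Put $r=s_-/s_+$. Entrywise we have $\sqrt{\mu_a\mu_b}\ge \kappa\,\frac{\mu_a+\mu_b}{2}$ with
\[
\kappa=\min_{a,b}\frac{2\sqrt{\mu_a\mu_b}}{\mu_a+\mu_b}\ \ge\ \frac{2}{\sqrt r+1/\sqrt r}.
\]
This holds because $t\mapsto 2/(\sqrt t+1/\sqrt t)$ is monotone in the eigenvalue ratio $t$, and the worst ratio is $s_+/s_-$. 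For self-adjoint $X$, both GNS terms equal $\rho(X^2)$. With $X=P-\rho(P)I$ and $P^2=P$,
\[
\rho(X^2)=\rho(P)-\rho(P)^2=\rho(P)(1-\rho(P)).
\]
Consequently
\[
\|X\|_{\KMS,\rho}^2\ge \frac{2}{\sqrt r+1/\sqrt r}\,\rho(P)(1-\rho(P)).
\]

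\textbf{Step 2: conclude.} Use the elementary inequality $\min\{a,1-a\}\le 2a(1-a)$ for $a\in(0,1)$, which holds since $\max\{a,1-a\}\ge 1/2$. Then
\[
\lambda_{\KMS}\le \frac{\langle P,\cL(P)\rangle_{\KMS,\rho}}{\|X\|_{\KMS,\rho}^2}
\le \Bigl(\sqrt r+\tfrac1{\sqrt r}\Bigr)\frac{\langle P,\cL(P)\rangle_{\KMS,\rho}}{2\rho(P)(1-\rho(P))}
\le \Bigl(\sqrt{\tfrac{s_-}{s_+}}+\sqrt{\tfrac{s_+}{s_-}}\Bigr)\frac{\langle P,\cL(P)\rangle_{\KMS,\rho}}{\min\{\rho(P),1-\rho(P)\}}.
\]
Taking the infimum over $P$ gives \eqref{eq:cheegerupper}.

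\textbf{Sharpness (expected main obstacle).} Equality forces three conditions at once: the minimizing projection must have $\rho(P)=1/2$; $X$ must be supported on the off-diagonal entries between the eigenvalues $s_+$ and $s_-$; and $X$ itself must be a spectral-gap minimizer. I would first try $n=2$ with $D_\rho=\mathrm{diag}(s_+,s_-)$. I would take a projection $P$ whose range is chosen so that $\rho(P)=1/2$ and $P$ has a large off-diagonal part, for instance $P=\frac12\begin{pmatrix}1&1\\1&1\end{pmatrix}$ when $s_+$ and $s_-$ are close; otherwise I would adjust the angle. I would then design a Poissonized KMS-symmetric generator via \eqref{eq:derivation}, for example $\cL=\Id-\bE$ for a suitable conditional expectation, or a family of $W_j$ killing the diagonal. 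The aim is a generator whose Dirichlet form is a multiple of the KMS norm on trace-zero off-diagonal elements and larger elsewhere. Checking that the infimum defining $h_\rho(\cL)$ is attained at this $P$ is the delicate part. If exact equality fails in general, I would exhibit a family of examples for which the ratio tends to the constant.
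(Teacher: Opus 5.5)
The inequality part of your proposal is correct and is the paper's argument. You use the centering identity $\langle P-\rho(P)I,\cL(P-\rho(P)I)\rangle_{\KMS,\rho}=\langle P,\cL(P)\rangle_{\KMS,\rho}$, then the entrywise bound $\|P-\rho(P)I\|_{\KMS,\rho}^2\ge\frac{2\sqrt{s_-s_+}}{s_-+s_+}\rho(P)(1-\rho(P))$, then $\min\{a,1-a\}\le 2a(1-a)$.

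\textbf{The gap.} The statement also asserts sharpness, and you do not prove it. You name no generator and compute neither $\lambda_{\KMS}$ nor $h_\rho(\cL)$ for any example. Your fallback of a family whose ratio tends to the constant would only give asymptotic optimality, not the attained equality the paper proves. You also misplace the difficulty. You do not need to check directly that your $P$ attains the infimum defining $h_\rho(\cL)$. The inequality you just proved gives $h_\rho(\cL)\ge\frac{\sqrt{s_-s_+}}{s_-+s_+}\lambda_{\KMS}$ for every admissible $\cL$, so it suffices to exhibit one projection at which every inequality in your chain is an equality. No angle adjustment is needed either. In $M_2$ with $D_\rho=\mathrm{diag}(s_1,s_2)$, the projection $P=\frac12\begin{pmatrix}1&1\\1&1\end{pmatrix}$ has $\rho(P)=\frac{s_1+s_2}{2}=\frac12$ for all $s_1,s_2$. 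Moreover $P-\frac12 I$ is purely off-diagonal, so your Steps 1 and 2 are already equalities.

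\textbf{How to close it.} What remains is a primitive KMS-symmetric $\cL$ for which $P-\frac12 I$ is a spectral-gap minimizer. The paper takes $\cL(X)=\gamma\bigl(\frac12\{V^*V,X\}-V^*XV\bigr)$ with $V=\begin{pmatrix}0&\sqrt{s_1}\\ \sqrt{s_2}&0\end{pmatrix}$ and $s_1\neq s_2$.
\begin{itemize}
\item $E_{12}+E_{21}$ is an eigenvector with eigenvalue $\gamma(\frac12-\sqrt{s_1s_2})$, and the centered diagonal part has eigenvalue $\gamma$, so $\lambda_{\KMS}=\gamma(\frac12-\sqrt{s_1s_2})$.
\item Then $\langle P,\cL(P)\rangle_{\KMS,\rho}=\frac12\gamma(\frac12-\sqrt{s_1s_2})\sqrt{s_1s_2}$, hence $h_\rho(\cL)\le\sqrt{s_1s_2}\,\lambda_{\KMS}$, matching the lower bound.
\end{itemize}
Your own suggestion $\cL=\Id-\bE$ also works if you take $\bE(X)=\rho(X)I$.
\begin{itemize}
\item This is a primitive KMS-symmetric (indeed Poissonized) generator with $\langle X,\cL(X)\rangle_{\KMS,\rho}=\|X\|_{\KMS,\rho}^2$ whenever $\rho(X)=0$, so $\lambda_{\KMS}=1$.
\item Also $\langle P,\cL(P)\rangle_{\KMS,\rho}=\|P-\frac12 I\|_{\KMS,\rho}^2=\frac12\sqrt{s_1s_2}$, which gives $h_\rho(\cL)\le\sqrt{s_1s_2}$.
\item Equality follows because $\sqrt{s_1/s_2}+\sqrt{s_2/s_1}=1/\sqrt{s_1s_2}$.
\end{itemize}
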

	\begin{proof}
		Let $P$ be a nonzero proper projection in $M_n(\bC)$.
		Note that 
		\begin{align*}
			\langle P-\rho(P)I, \cL (P-\rho(P)I)  \rangle_{\KMS, \rho}
			=\langle P, \cL(P) \rangle_{\KMS, \rho},
		\end{align*}
		and
		\begin{align*}
			\|P-\rho(P)I\|_{\KMS, \rho}^2 \geq \frac{2\sqrt{s_-s_+}}{s_-+s_+} \Tr(D_\rho (P-\rho(P)I)^2)
			=\frac{2\sqrt{s_-s_+}}{s_-+s_+} \rho(P)(1-\rho(P)).
		\end{align*}
		We have that 
		\begin{align*}
			\lambda_{\KMS}(\cL) \leq &  \frac{\langle P-\rho(P)I, \cL (P-\rho(P)I)  \rangle_{\KMS,\rho}}{\|P-\rho(P)I\|_{\KMS, \rho}^2} \\
			\leq & \frac{s_-+s_+}{2\sqrt{s_-s_+}}\frac{\langle P, \cL(P) \rangle_{\KMS, \rho}}{\rho(P)(1-\rho(P))}. 
		\end{align*}
		Taking the infimum over all nonzero proper projections $P$, we obtain
		\begin{align*}
			\lambda_{\KMS}(\cL)\leq \frac{s_-+s_+}{\sqrt{s_-s_+}} h_\rho(\cL).
		\end{align*}
		
		We consider the matrix algebra $M_2(\bC)$.
		Let 
		\begin{align*}
			D_\rho =\begin{pmatrix}
				s_1 & 0 \\ 0 &  s_2
			\end{pmatrix}, \quad V =\begin{pmatrix}
				0 & \sqrt{s_1} \\ \sqrt{s_2} & 0
			\end{pmatrix},\quad P =\frac{1}{2}\begin{pmatrix}
				1 & 1 \\ 1 & 1
			\end{pmatrix}.
		\end{align*}
		We define the linear map $\cL$ on $M_2(\bC)$ as follows: for any $X\in M_2(\bC)$,
		\begin{align*}
			\cL(X)=\gamma\left(\frac{1}{2} \{V^*V, X\}- V^*XV\right),
		\end{align*}
		where $\gamma>0$, $s_1,s_2>0$, $s_1+s_2=1$, and $s_1\ne s_2$.
These assumptions ensure that the example is primitive.
		For the given projection $P$, we have that 
		\begin{align*}
			\langle P, \cL(P) \rangle_{\KMS,\rho} =\frac{1}{2} \gamma\left(\frac{1}{2}-\sqrt{s_1s_2}\right)\sqrt{s_1s_2},
		\end{align*}
		and $\displaystyle \|P-\rho(P)I\|_{\KMS,\rho}^2=\frac12\sqrt{s_1s_2}$.
		We have that
		\begin{align*}
			h_\rho(\cL)\leq \gamma \left(\frac{1}{2}-\sqrt{s_1s_2}\right)\sqrt{s_1s_2}.
		\end{align*}
		On the other hand, Equation \eqref{eq:cheegerupper} implies that 
		\begin{align*}
			h_{\rho}(\cL) 
			\geq & \frac{\sqrt{s_1s_2}}{s_1+s_2}\lambda_{\KMS}
			=\sqrt{s_1s_2}\lambda_{\KMS} \\
			= & \gamma\left(\frac{1}{2}-\sqrt{s_1s_2}\right)\sqrt{s_1s_2}.
		\end{align*}
		This shows that the upper bound is attained and completes the proof of the proposition.
	\end{proof}

	\begin{theorem}[Quantum Cheeger Inequality]\label{thm:cheeger}
		Suppose that $\{\Phi_t\}_{t\geq 0}$ is a primitive KMS-symmetric quantum Markov semigroup.
		We have that 
		\begin{align*}
			\frac{h_\rho(\cL)^2s_-^2}{2s_+^2\|\cL\|_{\KMS,\rho}} \leq \lambda_{\KMS} \leq \left(\sqrt{\frac{s_-}{s_+}}+ \sqrt{\frac{s_+}{s_-}}\right) h_{\rho}(\cL).
		\end{align*}
	\end{theorem}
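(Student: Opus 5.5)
The theorem is the conjunction of Proposition~\ref{prop:lowerbd} and Proposition~\ref{prop:upperbd}, so the plan is to combine the two under the standing hypotheses: $\{\Phi_t\}_{t\geq0}$ is primitive and KMS-symmetric with respect to $\rho$. These hypotheses give $h_\rho(\cL)>0$, and $\lambda_{\KMS}$ is well defined as the infimum over self-adjoint $X\neq 0$ with $\rho(X)=0$.

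For the lower bound, I will invoke Proposition~\ref{prop:lowerbd} directly. Its argument runs as follows. Fix a self-adjoint $X$ with $\rho(X)=0$ and spectrally decompose it along minimal projections $F_j$. This produces a reversible classical generator $A$ with weights $w_{jk}$ built from the derivation matrices $W_\ell$ of \eqref{eq:derivation}. One then compares the classical conductance $h_A$ with $h_\rho(\cL)$ using the projections $F_S$, which gives \eqref{eq:classical}. The maximal degree is bounded by $\|\cL\|_{\KMS,\rho}$. The classical Cheeger inequality \eqref{eq: classical Cheeger ineq} then transfers back to $\langle X,\cL(X)\rangle_{\KMS,\rho}$ via the comparisons $s_-\|\cdot\|_{\mathrm{HS}}^2\le\|\cdot\|_{\KMS,\rho}^2$ and $\Tr(D_\rho X^2)\ge\|X\|_{\KMS,\rho}^2$. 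Taking the infimum over $X$ yields the left inequality.

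For the upper bound, I will invoke Proposition~\ref{prop:upperbd}. Take $P-\rho(P)I$ as a test element for $\lambda_{\KMS}$; it is self-adjoint and centered. Two facts are needed:
\begin{itemize}
\item Since $\cL(I)=0$ and $\cL$ is KMS-symmetric, the energy of $P-\rho(P)I$ is unchanged, i.e.\ equal to $\langle P,\cL(P)\rangle_{\KMS,\rho}$.
\item The KMS norm is compared with $\rho(P)(1-\rho(P))$ by a Kantorovich-type factor $\frac{2\sqrt{s_-s_+}}{s_-+s_+}$.
\end{itemize}
Finally, $\rho(P)(1-\rho(P))\ge\frac12\min\{\rho(P),1-\rho(P)\}$. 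Taking the infimum over nonzero proper projections gives the right inequality.

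The main obstacle is the norm comparison $\|Y\|_{\KMS,\rho}^2\ge\frac{2\sqrt{s_-s_+}}{s_-+s_+}\Tr(D_\rho Y^2)$ for self-adjoint $Y$. I would prove it by writing $Y$ in the eigenbasis of $D_\rho$. Then
\[
\|Y\|^2_{\KMS,\rho}=\sum_{a,b}\sqrt{\lambda_a\lambda_b}\,|Y_{ab}|^2,
\qquad
\Tr(D_\rho Y^2)=\sum_{a,b}\tfrac{\lambda_a+\lambda_b}{2}\,|Y_{ab}|^2 .
\]
So it suffices to check the entrywise ratio $\frac{2\sqrt{\lambda_a\lambda_b}}{\lambda_a+\lambda_b}\ge\frac{2\sqrt{s_-s_+}}{s_-+s_+}$. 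This holds because $\frac{2\sqrt{t}}{1+t}$ is monotone decreasing in $t=\lambda_a/\lambda_b\ge1$, so the ratio is minimized at the extreme eigenvalues. Combining both propositions then proves Theorem~\ref{thm:cheeger}.
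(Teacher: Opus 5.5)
Your proposal is correct and is essentially the paper's proof. The theorem is obtained by combining Propositions~\ref{prop:lowerbd} and~\ref{prop:upperbd}, and your sketches of both follow the paper's arguments: the $X$-dependent classical comparison chain with the classical Cheeger inequality for the lower bound, and the test element $P-\rho(P)I$ with the factor $\frac{2\sqrt{s_-s_+}}{s_-+s_+}$ for the upper bound. Your entrywise verification of $\|Y\|_{\KMS,\rho}^2\ge\frac{2\sqrt{s_-s_+}}{s_-+s_+}\Tr(D_\rho Y^2)$ in the eigenbasis of $D_\rho$ correctly supplies a step the paper only asserts.
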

	\begin{proof}
		It follows from Propositions \ref{prop:lowerbd} and \ref{prop:upperbd} directly.
	\end{proof}
	
	We define the worst-case trace-distance mixing time $t_{\mix}$ for the semigroup $\{\Phi_t\}_{t\geq 0}$ as follows:
	for $0<\varepsilon<1$,
	\begin{align*}
		t_{\mix}(\varepsilon) =\inf\left\{ t\geq 0: \sup_{D\in \cS(M_n)} \|\Phi_t^*(D)-D_\rho\|_1\leq 2\varepsilon\right\},
	\end{align*}
	where $\|X\|_1=\Tr(|X|)$ for $X\in M_n(\bC)$.
	
	\begin{theorem}\label{thm: Quantum Cheeger ineq}
		Suppose that $\{\Phi_t\}_{t\geq 0}$ is a primitive KMS-symmetric quantum Markov semigroup.
		For $\displaystyle 0< \varepsilon < \frac{1}{2}$, we have that \begin{align*}
			\frac{\sqrt{s_+s_-}}{s_-+s_+} \frac{1}{h_{\rho}(\cL)}\log \frac{1}{2\varepsilon} \leq t_{\mix}(\varepsilon)\leq \frac{2s_+^2\|\cL\|_{\KMS, \rho}}{s_-^2 h_\rho(\cL)^2}\log \frac{\sqrt{s_-^{-1}-1}}{2\varepsilon}.
		\end{align*}   
	\end{theorem}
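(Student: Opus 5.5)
Both bounds follow from the spectral gap $\lambda_{\KMS}$ combined with the two halves of Theorem~\ref{thm:cheeger}. For the upper bound on $t_{\mix}$ we show exponential decay in trace norm at rate $\lambda_{\KMS}$ and then insert the lower Cheeger bound. For the lower bound on $t_{\mix}$ we exhibit a slowly decaying eigenvector and then insert the upper Cheeger bound.

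\emph{Upper bound.} Let $D\in\cS(M_n)$ and set $X_t:=D_\rho^{-1/2}\Phi_t^*(D)D_\rho^{-1/2}-I$. KMS-symmetry gives
\[
\Phi_t^*(D_\rho^{1/2}YD_\rho^{1/2})=D_\rho^{1/2}\Phi_t(Y)D_\rho^{1/2},
\]
so $X_t=\Phi_t(X_0)$. Moreover $X_0$ is self-adjoint with $\rho(X_0)=\Tr(D)-1=0$. Since $\Phi_t=e^{-t\cL}$ with $\cL$ KMS-selfadjoint and $\ker\cL=\bC I$, we get $\|X_t\|_{\KMS,\rho}\le e^{-\lambda_{\KMS}t}\|X_0\|_{\KMS,\rho}$. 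Note that $\cL$ preserves self-adjointness and the spectral gap over self-adjoint elements equals the gap on the complement of $\bC I$. Next write
\[
\|\Phi_t^*(D)-D_\rho\|_1=\|D_\rho^{1/2}X_tD_\rho^{1/2}\|_1 .
\]
By H\"older, $\|D_\rho^{1/4}(D_\rho^{1/4}X_tD_\rho^{1/4})D_\rho^{1/4}\|_1\le \|D_\rho^{1/4}\|_4^2\,\|D_\rho^{1/4}X_tD_\rho^{1/4}\|_2=\|X_t\|_{\KMS,\rho}$. It remains to bound the initial quantity:
\[
\|X_0\|^2_{\KMS,\rho}=\Tr\bigl(D D_\rho^{-1/2}DD_\rho^{-1/2}\bigr)-1\le s_-^{-1}\Tr(D^2)-1\le s_-^{-1}-1 .
\]
Therefore $\|\Phi_t^*(D)-D_\rho\|_1\le\sqrt{s_-^{-1}-1}\,e^{-\lambda_{\KMS}t}$, and this is at most $2\varepsilon$ once $t\ge\lambda_{\KMS}^{-1}\log\frac{\sqrt{s_-^{-1}-1}}{2\varepsilon}$. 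Proposition~\ref{prop:lowerbd} then gives the stated bound. The crude bound on $\|X_0\|_{\KMS,\rho}$ is the step to check carefully; if $\Tr(D D_\rho^{-1/2}DD_\rho^{-1/2})\le s_-^{-1}$ fails as written, use $\|D_\rho^{-1/4}DD_\rho^{-1/4}\|_2^2\le\|D_\rho^{-1/2}\|_\infty\|D\|_2^2$ via operator monotonicity.

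\emph{Lower bound.} Take a self-adjoint eigenvector $Y$ with $\cL Y=\lambda_{\KMS}Y$, $\rho(Y)=0$ and $\|Y\|_\infty=1$; self-adjointness is available because $\cL$ commutes with the adjoint. Then $\Phi_t(Y)=e^{-\lambda_{\KMS}t}Y$. Pick a unit vector $\xi$ with $\langle\xi,Y\xi\rangle=\pm1$ and let $D=|\xi\rangle\langle\xi|$. By duality,
\[
\|\Phi_t^*(D)-D_\rho\|_1\ge|\Tr((\Phi_t^*(D)-D_\rho)Y)|=|\Tr(D\,\Phi_t(Y))-\rho(Y)|=e^{-\lambda_{\KMS}t}.
\]
Hence $t_{\mix}(\varepsilon)\ge\lambda_{\KMS}^{-1}\log\frac1{2\varepsilon}$, which is positive because $\varepsilon<1/2$. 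Proposition~\ref{prop:upperbd} gives $\lambda_{\KMS}^{-1}\ge\frac{\sqrt{s_+s_-}}{s_-+s_+}h_\rho(\cL)^{-1}$, which completes the argument. The main obstacle is the upper bound's transfer from KMS norm to trace norm with exactly the constant $\sqrt{s_-^{-1}-1}$; the remaining steps are routine.
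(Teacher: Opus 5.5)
Your proposal is correct and follows the paper's proof essentially step for step. The lower bound uses a self-adjoint $\lambda_{\KMS}$-eigenvector tested against a rank-one projection. The upper bound uses the KMS transfer $X_t=\Phi_t(X_0)$, the estimate $\|D_\rho^{1/2}X_tD_\rho^{1/2}\|_1\le\|X_t\|_{\KMS,\rho}$, spectral-gap decay, and $\|X_0\|_{\KMS,\rho}^2\le s_-^{-1}\Tr(D^2)-1$, followed by Propositions~\ref{prop:lowerbd} and~\ref{prop:upperbd}.

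Your hedge on that last bound is unnecessary, and no operator monotonicity is needed: $\|D_\rho^{-1/4}DD_\rho^{-1/4}\|_2\le\|D_\rho^{-1/4}\|_\infty^2\|D\|_2=s_-^{-1/2}\|D\|_2$ follows directly from H\"older.
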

	\begin{proof}
		We first prove that
		\begin{equation}\label{eq: t mix bound}
			\frac{1}{\lambda_{\mathrm{KMS}}}\log\frac{1}{2\varepsilon}
			\le t_{\mix}(\varepsilon)
			\le \frac{1}{\lambda_{\mathrm{KMS}}}
			\log\frac{\sqrt{s_-^{-1}-1}}{2\varepsilon}.
		\end{equation}
		Let $X$ be a Hermitian matrix in $M_n(\bC)$ such that
		\[
		\mathcal L(X)=\lambda_{\KMS}X,
		\qquad \rho(X)=0,
		\qquad \|X\|_\infty=1,
		\]
		and $P_0$ is a rank-one projection such that $|\Tr(XP_0)|=1$.
		Then 
		\begin{align*}
			\|\Phi_t^*(P_0)-D_\rho\|_1
			\geq & |\Tr\bigl((\Phi_t^*(P_0)-D_\rho)X\bigr)| \\
			= &|\Tr\bigl((P_0-D_\rho)\Phi_t(X)\bigr)| \\
			=& e^{-\lambda_{\KMS}t}|\Tr(P_0X)-\rho(X)|=e^{-\lambda_{\KMS}t}.
		\end{align*}
		Hence
		\[
		2\varepsilon \ge \sup_{D \in \cS(M_n)}
		\|\Phi_{t_\mix(\varepsilon)}^*(D)-D_\rho\|_1
		\ge e^{-\lambda_{\KMS}t_{\mix}(\varepsilon)}.
		\]
		It follows that
		\begin{equation*}
			t_{\mix}(\varepsilon) \ge
			\frac{1}{\lambda_{\mathrm{KMS}}}\log\frac{1}{2\varepsilon}.
		\end{equation*}
		
		We now prove the upper bound of \eqref{eq: t mix bound}. 
		For any density matrix $D$, we have that 
		\begin{equation}
			\begin{aligned}
				\| \Phi_t^*(D)-D_\rho\|_1 
				&= \| D_\rho^{1/2} \Phi_t (D_\rho^{-1/2} D D_\rho^{-1/2}-I) D_\rho^{1/2} \|_1 \\
				&\le  \|\Phi_t (D_\rho^{-1/2} D D_\rho^{-1/2}-I) \|_{\KMS,\rho}\\
				&\le e^{-\lambda_\KMS t} \|D_\rho^{-1/2} D D_\rho^{-1/2}-I \|_{\KMS,\rho},
			\end{aligned}
		\end{equation}
		Here the first equality follows from KMS symmetry,
\[
\Phi_t^*(D_\rho^{1/2}XD_\rho^{1/2})
=D_\rho^{1/2}\Phi_t(X)D_\rho^{1/2}.
\]
The first inequality follows from the Cauchy--Schwarz inequality, and the second follows from the spectral gap.
Moreover, since $\Tr(D^2)\leq1$, we have
\[
\begin{aligned}
\|D_\rho^{-1/2}DD_\rho^{-1/2}-I\|_{\KMS,\rho}^2
&=\Tr\!\left[(D_\rho^{-1/4}DD_\rho^{-1/4})^2\right]-1\\
&\leq s_-^{-1}\Tr(D^2)-1\leq s_-^{-1}-1.
\end{aligned}
\]
		By taking
		\[
		t_*=
		\frac{1}{\lambda_{\KMS}}
		\log\frac{\sqrt{s_-^{-1}-1}}{2\varepsilon},
		\]
		we obtain that
		\[
		\sup_{D\in \cS(M_n)}
		\|\Phi_{t_*}^*(D)-D_\rho\|_1
		\le2\varepsilon.
		\]
		Hence $t_{\mathrm{mix}}(\varepsilon)\leq t_*$, which proves the second inequality in \eqref{eq: t mix bound}.
		
		Finally, substituting the bounds on $\lambda_{\KMS}$ from the quantum Cheeger inequality (Theorem \ref{thm:cheeger}) into inequality \eqref{eq: t mix bound} completes the proof of the theorem.
	\end{proof}
	
	\begin{remark}
		For the slow-mixing certification, it suffices to check finitely many projections with a small ratio.
		Theorem \ref{thm: Quantum Cheeger ineq} gives a lower bound on the necessary preparation time.
	\end{remark}

	We define a distance on the state space $\cS(M_n(\bC))$ by
	\begin{align*}
		d_{\cL}(D_1, D_2)=\sup_{\substack{X=X^*, \\ \langle X, \cL(X)\rangle_{\KMS, \rho}\leq 1}} |\Tr((D_1-D_2)X)|,
	\end{align*}
	where $D_1, D_2$ are density matrices.
	The diameter of the state space with respect to the distance $d_{\cL}$ is denoted by $\diam_{d_{\cL}}\cS(M_n(\bC))$. 
	
	Note that for a classical resistance network, the square of this distance between point masses is effective resistance.
	\begin{theorem}\label{thm:nongeodis1}
		Suppose that $\{\Phi_t\}_{t \geq 0}$ is a primitive KMS-symmetric quantum Markov semigroup.
		We have that 
		\begin{align*}
			\sqrt{\frac{2}{h_\rho(\cL)}} \leq \diam_{d_{\cL}} \cS(M_n(\bC)) \leq \frac{2s_+}{s_-h_\rho(\cL) } \sqrt{\frac{\|\cL\|_{\KMS, \rho}}{s_-}}.
		\end{align*}
	\end{theorem}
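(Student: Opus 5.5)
Both bounds rest on a duality. For a density matrix difference $\Delta = D_1 - D_2$, which is Hermitian and traceless, the quantity $d_{\cL}(D_1,D_2)$ is a supremum of a linear functional over the ellipsoid $\{X=X^*:\langle X,\cL(X)\rangle_{\KMS,\rho}\le 1\}$. Adding multiples of $I$ to $X$ does not change $\Tr(\Delta X)$ or the Dirichlet form, so we may restrict to $\rho(X)=0$.

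\textbf{Lower bound.} We exhibit two states and a test observable built from a near-optimal projection.
\begin{itemize}
\item Fix a nonzero proper projection $P$ and write $p=\rho(P)$. Without loss of generality $p\le 1/2$; otherwise replace $P$ by $I-P$, which leaves $\langle P,\cL(P)\rangle_{\KMS,\rho}$ unchanged because $\cL(I)=0$.
\item Take $D_1$ a pure state supported in $\mathrm{Ran}(P)$ and $D_2$ a pure state supported in $\mathrm{Ran}(I-P)$, so that $\Tr((D_1-D_2)P)=1$.
\item Put $X=P/\sqrt{\langle P,\cL(P)\rangle_{\KMS,\rho}}$. This gives $d_{\cL}(D_1,D_2)\ge \langle P,\cL(P)\rangle^{-1/2}_{\KMS,\rho}$.
\item Since $\langle P,\cL(P)\rangle_{\KMS,\rho}$ equals $p\cdot(\text{ratio})\le \tfrac12(\text{ratio})$, we get $d_{\cL}\ge \sqrt{2/\text{ratio}}$.
\item Taking $P$ to (nearly) achieve $h_\rho(\cL)$, which is attained by compactness, gives the lower bound $\sqrt{2/h_\rho(\cL)}$.
\end{itemize}
This is essentially routine.

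\textbf{Upper bound.} We need, for Hermitian $X$ with $\rho(X)=0$ and $\langle X,\cL X\rangle_{\KMS,\rho}\le 1$, a uniform bound on $|\Tr((D_1-D_2)X)|\le 2\|X\|_\infty$.
\begin{itemize}
\item The spectral gap gives $\|X\|_{\KMS,\rho}^2\le 1/\lambda_{\KMS}$.
\item Diagonalize $X$ as $\sum x_jF_j$, exactly as in Proposition \ref{prop:lowerbd}. Then $\|X\|_{\KMS,\rho}^2=\Tr(D_\rho X^2)$ only up to comparison: $\|X\|^2_{\KMS,\rho}\ge s_-\|X\|^2_{HS}\ge s_-\|X\|_\infty^2$.
\item Hence $\|X\|_\infty\le (s_-\lambda_{\KMS})^{-1/2}$.
\item Inserting Proposition \ref{prop:lowerbd}, $\lambda_{\KMS}\ge h_\rho^2s_-^2/(2s_+^2\|\cL\|_{\KMS,\rho})$, gives $\|X\|_\infty\le \frac{\sqrt2 s_+}{s_- h_\rho}\sqrt{\|\cL\|_{\KMS,\rho}/s_-}$.
\end{itemize}

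The expected main obstacle is the constant. Bounding $|\Tr((D_1-D_2)X)|$ by $2\|X\|_\infty$ gives a factor $2\sqrt2$, not the stated $2$.

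To recover $2$, I would not pass through $\|X\|_\infty$ naively. Instead, I would exploit $\rho(X)=0$ more cleverly: $X$ is traceless with respect to $D_\rho$, so its spectrum straddles $0$. The oscillation $\max_j x_j-\min_j x_j$ is what matters, since $\Tr((D_1-D_2)X)\le \max x-\min x$.

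I would bound this oscillation directly through the classical chain $A$ from Proposition \ref{prop:lowerbd}. The Dirichlet form dominates $\sum_{j<k}w_{jk}(x_j-x_k)^2$, and the oscillation squared is at most $\sum_j y_j x_j^2/\min_j y_j$-type expressions, controlled by $1/(s_-\lambda_A)$.

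An alternative route to the oscillation estimate: apply the Cauchy–Schwarz estimate to $\max x$ and $-\min x$ separately and add. Concretely, write the oscillation as $(x_{\max}-\bar x)+(\bar x-x_{\min})$, where $\bar x$ is the $\rho$-mean, and bound each term by $\sqrt{1/(s_-\lambda_A)}$. This gives $2/\sqrt{s_-\lambda_A}$.

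Combined with \eqref{eq:lowerbdcl}, $\lambda_A\ge s_-^2h_\rho^2/(2s_+^2\|\cL\|_{\KMS,\rho})$, this yields $\frac{2\sqrt2 s_+}{s_-h_\rho}\sqrt{\|\cL\|_{\KMS,\rho}/s_-}$. If the factor $\sqrt2$ persists, I would tighten by using $\sum_j y_jx_j^2\ge y_{j_0}x_{j_0}^2$ with $y_{j_0}\ge s_-$ applied to the $\rho$-centered vector, and check whether the $\|\cL\|$ comparison $d^A_{\max}\le\|\cL\|$ can absorb the remaining factor.
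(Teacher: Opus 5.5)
Your lower bound is the paper's argument: the test observable $P/\sqrt{\langle P,\cL(P)\rangle_{\KMS,\rho}}$, paired with states supported on $P$ and $I-P$, where $P$ minimizes the conductance ratio. The upper bound, however, has a genuine gap. As written, it proves the estimate with $2\sqrt2$ in place of $2$, and none of your proposed repairs removes the $\sqrt2$. Your ``separate Cauchy--Schwarz'' for $x_{\max}-\bar x$ and $\bar x-x_{\min}$ bounds each extreme eigenvalue individually by $(s_-\lambda)^{-1/2}$. So does your fallback $\sum_j y_jx_j^2\ge y_{j_0}x_{j_0}^2$, and that individual bounding is exactly where the factor is lost. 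Appealing to $d^A_{\max}\le\|\cL\|_{\KMS,\rho}$ cannot help either, because the loss occurs when you pair $D_1-D_2$ with $X$, not in the spectral-gap estimate.

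The missing idea is to bound the two extremes jointly. For $X=X^*\neq0$ with $\rho(X)=0$, $X$ is not scalar, so its largest and smallest eigenvalues occupy distinct indices. Hence
\[
(x_{\max}-x_{\min})^2\le 2\bigl(x_{\max}^2+x_{\min}^2\bigr)\le 2\|X\|_{\mathrm{HS}}^2\le \frac{2}{s_-}\|X\|_{\KMS,\rho}^2\le \frac{2}{s_-\lambda_{\KMS}}.
\]
Combining this with your oscillation bound $|\Tr((D_1-D_2)X)|\le x_{\max}-x_{\min}$ and Proposition \ref{prop:lowerbd} gives exactly $\frac{2s_+}{s_-h_\rho(\cL)}\sqrt{\|\cL\|_{\KMS,\rho}/s_-}$.

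Equivalently, pair $\Delta=D_1-D_2$ with $X$ by Cauchy--Schwarz in the Hilbert--Schmidt norm rather than by the $(1,\infty)$ H\"older pairing. We have $\|\Delta\|_{\mathrm{HS}}^2\le\|\Delta\|_1\|\Delta\|_\infty\le 2$. By contrast, charging $\|\Delta\|_1\le2$ against $\|X\|_\infty\le\|X\|_{\mathrm{HS}}$ wastes a factor $\sqrt2$.

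The paper does the dual version of this. It sets $D_-=D_\rho^{-1/2}(D_1-D_2)D_\rho^{-1/2}$ and writes $d_{\cL}(D_1,D_2)^2=\langle D_-,\cL^{-1}(D_-)\rangle_{\KMS,\rho}$. It bounds this by $\lambda_{\KMS}^{-1}\|D_-\|_{\KMS,\rho}^2\le\lambda_{\KMS}^{-1}s_-^{-1}\|D_1-D_2\|_{\mathrm{HS}}^2$, then uses $\|D_1-D_2\|_{\mathrm{HS}}^2\le\|D_1-D_2\|_1\le2$ before inserting the Cheeger lower bound. With the fix above, your primal argument is the same Cauchy--Schwarz estimate as the paper's.
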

	\begin{proof}
		Suppose that $P$ is a projection such that $P\neq 0, I$ and $D_1, D_2$ are density matrices supported on $P$ and $I-P$ respectively.
		Then $\Tr((D_1-D_2)P)=1$ and 
		\begin{align}
			d_{\cL}(D_1, D_2) \geq \frac{1}{\sqrt{\langle P, \cL(P)\rangle_{\KMS, \rho}}}.
		\end{align}
		By taking the projection $P$ minimizing the conductance, we have that
		\begin{align*}
			\langle P, \cL(P)\rangle_{\KMS, \rho} =h_{\rho}(\cL) \min\{\rho(P), 1-\rho(P)\}\leq \frac{1}{2} h_{\rho}(\cL).
		\end{align*}
		By taking the supremum over all distances, we obtain that 
		\begin{align}
			\sqrt{\frac{2}{h_\rho(\cL)}} \leq \diam_{d_{\cL}} \cS(M_n(\bC)).   
		\end{align}
		
		Let
		\begin{align*}
			D_{-}=D_\rho^{-1/2}(D_1-D_2) D_\rho^{-1/2}.
		\end{align*}
		We have that $\rho(D_-)=0$ and 
		\begin{align*}
			d_{\cL}(D_1,D_2)=\sqrt{\langle D_-, \cL^{-1}(D_-)\rangle_{\KMS, \rho}},
		\end{align*}
		where $\cL^{-1}$ acts on the Hilbert subspace $\cH_0=\{X\in M_n(\bC): \rho(X)=0\}$.
		Now we have that 
		\begin{align*}
			d_{\cL}(D_1, D_2)^2 \leq & \frac{1}{\lambda_{\KMS}} \|D_-\|_{\KMS, \rho}^2
			\leq \frac{1}{\lambda_{\KMS}} s_-^{-1}\|D_1-D_2\|_{HS}^2\\
			\leq & \frac{2s_+^2 \|\cL\|_{\KMS, \rho}}{h_{\rho}(\cL)^2s_-^2} s_-^{-1} \|D_1-D_2\|_1 \quad \text{Theorem \ref{thm:cheeger}} \\
			\leq& \frac{4s_+^2 \|\cL\|_{\KMS, \rho}}{h_{\rho}(\cL)^2s_-^3}.
		\end{align*}
		This shows that 
		\begin{align*}
			\diam_{d_{\cL}} \cS(M_n(\bC)) \leq \frac{2s_+}{s_-h_\rho(\cL) } \sqrt{\frac{\|\cL\|_{\KMS, \rho}}{s_-}}.    
		\end{align*}
		Combining the two inequalities, we see that the inequality is true.
	\end{proof}
	
	\begin{remark}
		Theorem \ref{thm:nongeodis1} shows that small projection conductance implies large distance between quantum sectors.    
	\end{remark}
	
	An alternative way to define the distance is the Connes' distance
	\begin{align*}
		d_{\Gamma_0}(D_1, D_2)=\sup_{\substack{X=X^*,\\ \|\Gamma_0(X)\|\leq 1}}|\Tr((D_1-D_2)X)|,
	\end{align*}
	where $\displaystyle \Gamma_0(X)=\sum_{j=1}^m [W_j, X]^*[W_j , X]$.
	By the adjoint pairing of the indexed family $\{W_j\}_{j=1}^m$, for every $X=X^*$ we have
\[
\sum_j[W_j,X]^*[W_j,X]=\sum_j[W_j,X][W_j,X]^*.
\]
For any matrix $A$, the arithmetic--geometric mean inequality in an eigenbasis of $D_\rho$ gives
\[
\|A\|_{\KMS,\rho}^2
\leq\frac12\Tr\!\left(D_\rho(A^*A+AA^*)\right).
\]
Applying this to $A=[W_j,X]$ and summing yields
\[
\langle X,\cL(X)\rangle_{\KMS,\rho}
\leq\Tr(D_\rho\Gamma_0(X))\leq\|\Gamma_0(X)\|.
\]
	This shows that $d_{\Gamma_0}(D_1, D_2)\leq d_{\cL}(D_1, D_2)$,  and
	\begin{align*}
		\diam_{d_{\Gamma_0}}\cS(M_n(\bC)) \leq \diam_{d_{\cL}}\cS(M_n(\bC))\leq \frac{2s_+}{s_-h_\rho(\cL) } \sqrt{\frac{\|\cL\|_{\KMS, \rho}}{s_-}}.
	\end{align*}
	
	\section{Graph-based KMS-Symmetric QMS}\label{sec:graph}

In this section, we establish graph-dependent Cheeger bounds
for graph-based KMS-symmetric quantum Markov semigroups.
    
	Suppose that $\cG=(V(\cG), E(\cG))$ with the vertex set $V(\cG)=\{1,2, \ldots, n\}$ is a finite undirected graph with a positive weight $\gamma_{jk}>0$ on the edge $e_{jk}$ in $E(\cG)$, where $\gamma_{jk}=\gamma_{kj}$. 
	
	Let $\{E_{jk}\}_{j,k=1}^n$ be the system of matrix units of $M_n(\bC)$.
	Let $\rho$ be a faithful state on $M_n(\bC)$ with density $\displaystyle D_\rho=\sum_{j=1}^n s_j E_{jj}$.
We set $\gamma_{jk}=0$ whenever $e_{jk}\notin E(\cG)$.
Throughout this section, $n\geq2$. We use the Rayleigh quotient on the
space of matrices with zero $\rho$-mean to define $\lambda_{\KMS}$;
thus $\lambda_{\KMS}=0$ whenever $\ker\cL\neq\bC I$.
The quantity $\lambda_{\cG}$ denotes the second eigenvalue of the
unweighted graph Laplacian, counting multiplicity, and is therefore
zero for a disconnected graph.
	
	Let $d_j$ be the weighted degree with respect to $\rho$ at vertex $j$, which is defined as follows:
	\begin{align*}
		d_j=\sum_{k: e_{jk}\in E(\cG)}\gamma_{jk} s_k, \quad d_{\max}=\max_j\{d_j\}.
	\end{align*}
	Let $\vec{s}=(s_1, \ldots, s_n)^{\mathsf{T}}$ and
	\begin{align}\label{eq:spectralgap}
		\varphi_{\cG}=\inf_{\vec{s}\perp \vec{t},\  \vec{t}\neq 0}\frac{\displaystyle \sum_{e_{jk}\in E(\cG)}\gamma_{jk}s_js_k |t_j-t_k|^2}{\displaystyle \sum_{j=1}^n s_j|t_j|^2},
	\end{align}
	where $\vec{t}=(t_1, \ldots, t_n)^{\mathsf{T}}$.
	If $E(\cG)\neq\varnothing$, we have 
	\begin{align*}
		\frac{(\min_{e_{jk}}\gamma_{jk})\min_{e_{jk}}s_js_k}{\max_j s_j}   \lambda_{\cG}\leq \varphi_{\cG}\leq 2d_{\max},
	\end{align*}
	For an edgeless graph, $\varphi_{\cG}=d_{\max}=\lambda_{\cG}=0$,
and the generator defined below is zero.

	For each edge $e_{jk}\in E(\cG)$, we set
	\begin{align}
		V_{jk}=\sqrt{s_j}E_{jk}+\sqrt{s_k} E_{kj}\in M_n(\bC).
	\end{align}
	The Lindbladian associated to the graph $\cG$ is given by
	\begin{align}\label{eq:mnlind1}
		\cL(X) =\sum_{e_{jk}\in E(\cG)} \gamma_{jk} \left(\frac{1}{2}\{V_{jk}^*V_{jk}, X\}- V_{jk}^* X V_{jk}\right), \quad X\in M_n(\bC).
	\end{align}
	By the GKSL representation of the generator of the quantum Markov semigroup \cite{Lin76}, we see that the corresponding semigroup is a quantum Markov semigroup, denoted by $\{\Phi_t\}_{t\geq 0}$.
	Moreover, we have that 
	\begin{align}
		\cL(E_{jj}) =& \sum_{e_{jk}\in E(\cG)} \gamma_{jk} (s_k E_{jj}-s_jE_{kk}), \label{eq:lindbase1}\\
		\cL(E_{jk})= & \frac{d_j+d_k}{2} E_{jk}-\mathbbm{1}_{e_{jk}\in E(\cG)} \gamma_{jk}\sqrt{s_js_k}E_{kj}, \quad j\neq k. \label{eq:lindbase2}
	\end{align}
	
	Let 
	\begin{align}\label{eq:coherence}
		\phi_{\cG}=\min_{j<k}\frac{2\sqrt{s_js_k}}{s_j+s_k} \left(\frac{d_j+d_k}{2}-\mathbbm{1}_{e_{jk}}\gamma_{jk}\sqrt{s_js_k} \right).
	\end{align}
	We also define the conductance of $\cG$ by
	\begin{align*}
		h_{\rho, \cG}=\min_{\emptyset \neq S\subsetneq V(\cG)}\frac{\displaystyle \sum_{j\in S, \, k\notin S}\gamma_{jk} s_js_k}{\displaystyle \min\left\{\sum_{j\in S} s_j, \sum_{j\notin S} s_j\right\}}.
	\end{align*}
	\begin{proposition}
		Suppose that $\{\Phi_t\}_{t\geq 0}$ is a primitive KMS-symmetric quantum Markov semigroup with generator \eqref{eq:mnlind1}. 
		We have that 
		\begin{align*}
			\frac{1}{2}\min\{\varphi_{\cG}, \phi_{\cG}\}   \leq h_\rho(\cL)\leq h_{\rho, \cG}.
		\end{align*}   
	\end{proposition}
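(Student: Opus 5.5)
The plan is to prove the two inequalities separately. For the upper bound I test $h_\rho(\cL)$ on diagonal projections. For the lower bound I use the fact that $\cL$ in \eqref{eq:mnlind1} respects the splitting of $M_n(\bC)$ into its diagonal part and the $2\times 2$ off-diagonal blocks $\mathrm{span}\{E_{jk},E_{kj}\}$. This splitting is read off from \eqref{eq:lindbase1} and \eqref{eq:lindbase2}.

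\emph{Upper bound.} For $\emptyset\neq S\subsetneq V(\cG)$ let $F_S=\sum_{j\in S}E_{jj}$, so that $\rho(F_S)=\sum_{j\in S}s_j$.
\begin{itemize}
\item Summing \eqref{eq:lindbase1} over $j\in S$, the terms with both endpoints in $S$ cancel pairwise by the symmetry $\gamma_{jk}=\gamma_{kj}$. This leaves
\[
\cL(F_S)=\sum_{j\in S,\,k\notin S}\gamma_{jk}\bigl(s_kE_{jj}-s_jE_{kk}\bigr).
\]
\item Since $F_S$ and $D_\rho$ are diagonal, $\langle F_S,\cL(F_S)\rangle_{\KMS,\rho}=\Tr(F_SD_\rho\cL(F_S))$. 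This equals $\sum_{j\in S,k\notin S}\gamma_{jk}s_js_k$.
\item Dividing by $\min\{\rho(F_S),1-\rho(F_S)\}$ and minimizing over $S$ gives $h_\rho(\cL)\le h_{\rho,\cG}$.
\end{itemize}

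\emph{Lower bound.} Let $P$ be a nonzero proper projection and set $X=P-\rho(P)I$.
\begin{itemize}
\item As in the proof of Proposition~\ref{prop:upperbd}, $\langle X,\cL(X)\rangle_{\KMS,\rho}=\langle P,\cL(P)\rangle_{\KMS,\rho}$ and $\Tr(D_\rho X^2)=\rho(P)(1-\rho(P))\ge\frac12\min\{\rho(P),1-\rho(P)\}$.
\item It therefore suffices to show
\[
\langle X,\cL(X)\rangle_{\KMS,\rho}\ge\min\{\varphi_{\cG},\phi_{\cG}\}\,\Tr(D_\rho X^2)
\]
for every Hermitian $X$ with $\rho(X)=0$.
\end{itemize}

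Write $X=X_d+\sum_{j<k}X_{jk}$, where $X_d=\sum_j t_jE_{jj}$ is the diagonal part and $X_{jk}=x_{jk}E_{jk}+\overline{x_{jk}}E_{kj}$ is a Hermitian off-diagonal block. By \eqref{eq:lindbase1} and \eqref{eq:lindbase2}, $\cL$ maps each of these summands into its own subspace. These subspaces are mutually orthogonal both for the KMS inner product and for $(A,B)\mapsto\Tr(D_\rho A^*B)$, since $D_\rho$ is diagonal. Hence both the quadratic form and $\Tr(D_\rho X^2)=\sum_{j,k}s_j|X_{jk}|^2$ split over the summands, and I bound each piece separately.
\begin{itemize}
\item \textbf{Diagonal piece.} Here $\vec t\perp\vec s$ because $\rho(X)=0$. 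A direct computation from \eqref{eq:lindbase1} gives
\[
\langle X_d,\cL(X_d)\rangle_{\KMS,\rho}=\sum_{e_{jk}}\gamma_{jk}s_js_k|t_j-t_k|^2.
\]
By \eqref{eq:spectralgap} this is at least $\varphi_{\cG}\sum_js_j|t_j|^2$.
\item \textbf{Off-diagonal pieces.} Using $\|E_{jk}\|_{\KMS,\rho}^2=\sqrt{s_js_k}$ and \eqref{eq:lindbase2}, the form on the block $(j,k)$ equals
\[
\sqrt{s_js_k}\Bigl[(d_j+d_k)|x_{jk}|^2-2\,\mathbbm 1_{e_{jk}}\gamma_{jk}\sqrt{s_js_k}\,\Re(x_{jk}^2)\Bigr].
\]
Since $\Re(x_{jk}^2)\le|x_{jk}|^2$, this is at least $2\sqrt{s_js_k}\bigl(\tfrac{d_j+d_k}2-\mathbbm 1_{e_{jk}}\gamma_{jk}\sqrt{s_js_k}\bigr)|x_{jk}|^2$. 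That in turn is at least $\phi_{\cG}(s_j+s_k)|x_{jk}|^2$ by \eqref{eq:coherence}, and $(s_j+s_k)|x_{jk}|^2$ is exactly the contribution of $X_{jk}$ to $\Tr(D_\rho X^2)$.
\end{itemize}
Summing the pieces gives the claim, and hence $h_\rho(\cL)\ge\frac12\min\{\varphi_{\cG},\phi_{\cG}\}$.

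The main obstacle is the careful verification of the off-diagonal block computation. Two points need checking: the exact KMS weights, and that Hermiticity forces $x_{kj}=\overline{x_{jk}}$, which is what allows the cross term to be bounded by $|x_{jk}|^2$. Primitivity is used only to guarantee that $h_\rho(\cL)$ is defined and positive.
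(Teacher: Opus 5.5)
Your proof is correct and follows essentially the same route as the paper. The upper bound uses the diagonal test projections $P_S$. The lower bound splits the KMS Dirichlet form into the diagonal part (controlled by $\varphi_{\cG}$) and the $2\times 2$ off-diagonal blocks (controlled by $\phi_{\cG}$), which together give $\min\{\varphi_{\cG},\phi_{\cG}\}\,\rho(P)(1-\rho(P))$. The only cosmetic difference is that you center $P$ and prove the bound for every Hermitian $X$ with $\rho(X)=0$, whereas the paper works with $P$ directly and uses $P^2=P$ to rewrite $\sum_{j<k}(s_j+s_k)|p_{jk}|^2$ as $\sum_j s_j p_{jj}(1-p_{jj})$.
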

	\begin{proof}
		The upper bound follows by taking the diagonal test projections
$\displaystyle P_S=\sum_{j\in S}E_{jj}$ for all nonempty proper subsets $S\subset V(\cG)$.
		
		Suppose that $P$ is a nonzero proper projection with $(j,k)$-entry $p_{jk}$.
		By a direct computation, we have that
		\begin{align*}
			\langle P, \cL(P)\rangle_{\KMS,\rho}
			=& \sum_{j<k} \gamma_{jk} s_js_k(p_{jj}-p_{kk})^2 \\
			& +2\sum_{j<k} \sqrt{s_js_k}\left(\frac{d_j+d_k}{2}-\mathbbm{1}_{e_{jk}}\gamma_{jk}\sqrt{s_js_k} \right)(\Re p_{jk})^2 \\
			& +2\sum_{j<k} \sqrt{s_js_k}\left(\frac{d_j+d_k}{2}+\mathbbm{1}_{e_{jk}}\gamma_{jk}\sqrt{s_js_k} \right)(\Im p_{jk})^2.
		\end{align*}
		By Equations \eqref{eq:spectralgap} and \eqref{eq:coherence}, we have that 
		\begin{align*}
			\langle P, \cL(P)\rangle_{\KMS,\rho} \geq &  \varphi_{\cG}\left(\sum_{j} s_jp_{jj}^2-\rho(P)^2\right)
			+ \phi_{\cG} \sum_{j<k} (s_j+s_k)|p_{jk}|^2\\
			\geq & \varphi_{\cG} \left(\sum_{j} s_jp_{jj}^2-\rho(P)^2\right) +\phi_{\cG} \sum_{j} s_jp_{jj}(1-p_{jj})\\
			\geq & \min\{\varphi_{\cG}, \phi_{\cG}\}\left( \left(\sum_{j} s_jp_{jj}^2-\rho(P)^2\right) + \sum_{j} s_jp_{jj}(1-p_{jj})\right)\\
			=& \min\{\varphi_{\cG}, \phi_{\cG}\} \rho(P)(1-\rho(P)).
		\end{align*}
		By the fact that $\displaystyle \rho(P)(1-\rho(P))\geq \frac{1}{2}\min\{\rho(P), 1-\rho(P)\}$, we see that the lower bound is true.
	\end{proof}
	
	\begin{remark}
		Suppose that $n=2$ and $s_1\neq s_2$.
		The classical conductance $h_{\rho, \cG}=\gamma\max\{s_1, s_2\}$.
		We have that the projection conductance $h_{\rho}(\cL)$ is 
		\begin{align*}
			h_{\rho}(\cL)=\gamma\sqrt{s_1s_2}\left(\frac{1}{2}-\sqrt{s_1s_2}\right),
		\end{align*}
		and the minimum is attained at the projection $\displaystyle \frac{1}{2}\begin{pmatrix}
			1 & 1 \\ 1 & 1
		\end{pmatrix}$, which does not commute with $D_\rho$.
		By taking $\displaystyle s_1=\frac{1}{3}, s_2=\frac{2}{3}$, we see that $\displaystyle h_{\rho,\cG}=\frac{2}{3}\gamma$ and $\displaystyle h_{\rho}(\cL)=\frac{3\sqrt{2}-4}{18}\gamma < h_{\rho, \cG}$.
	\end{remark}

	In the following, we shall prove a quantum Cheeger inequality for graph-based KMS-symmetric quantum Markov semigroups.
	\begin{theorem}\label{thm:graphcheeger}
Suppose that $\{\Phi_t\}_{t\geq 0}$ is the graph-based KMS-symmetric quantum Markov semigroup defined by \eqref{eq:mnlind1}.
For an edgeless graph, we set $h_{\rho,\cG}^2/(2d_{\max})=0$.
If $n \geq 3$, we have that 
 \begin{align*}
\min\left\{ \frac{h_{\rho, \cG}^2}{2d_{\max}}, \frac{s_-}{2s_+} h_{\rho, \cG}\right\}     \leq \lambda_{\KMS} \leq 2h_{\rho, \cG}.
 \end{align*}
If $n=2$ and $\gamma_{12}=\gamma_{21}=\gamma$, then $\displaystyle \lambda_{\KMS}=\frac{\gamma}{2}(\sqrt{s_1}-\sqrt{s_2})^2$. 
\end{theorem}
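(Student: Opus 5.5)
The proof will be an exact block diagonalization of $\cL$ with respect to the KMS inner product; it reduces $\lambda_{\KMS}$ to a classical spectral gap and a finite list of scalar ``coherence'' rates. By \eqref{eq:lindbase1}, the diagonal subspace $\mathrm{span}\{E_{jj}\}$ is invariant under $\cL$. By \eqref{eq:lindbase2}, for each $j<k$ the two-dimensional space $\mathrm{span}\{E_{jk},E_{kj}\}$ is also invariant. These subspaces are mutually KMS-orthogonal, because $D_\rho$ is diagonal. A direct check gives
\[
\|E_{jk}\|_{\KMS,\rho}^2=\|E_{kj}\|_{\KMS,\rho}^2=\sqrt{s_js_k},\qquad \langle E_{jk},E_{kj}\rangle_{\KMS,\rho}=0 .
\]

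On the diagonal block, write $X=\sum_j t_jE_{jj}$. Then the KMS quadratic form is $\sum_{e_{jk}}\gamma_{jk}s_js_k(t_j-t_k)^2$, the squared norm is $\sum_j s_jt_j^2$, and the condition $\rho(X)=0$ is exactly $\vec t\perp\vec s$. So the gap of this block is precisely $\varphi_{\cG}$. On the block $\{E_{jk},E_{kj}\}$, the operator $\cL$ is represented by the symmetric matrix $\begin{pmatrix} a&-b\\-b&a\end{pmatrix}$, where
\[
a=\tfrac{d_j+d_k}{2},\qquad b=\mathbbm{1}_{e_{jk}}\gamma_{jk}\sqrt{s_js_k}.
\]
Its smallest eigenvalue is $a-b$. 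Hermitian test matrices lie in the real span of $E_{jk}+E_{kj}$ and $i(E_{jk}-E_{kj})$, so the Hermitian restriction does not change the infimum. I therefore expect the identity
\[
\lambda_{\KMS}=\min\Bigl\{\varphi_{\cG},\ \min_{j<k}\Bigl(\tfrac{d_j+d_k}{2}-\mathbbm{1}_{e_{jk}}\gamma_{jk}\sqrt{s_js_k}\Bigr)\Bigr\}.
\]
This identity holds with or without primitivity, consistent with the convention that $\lambda_{\KMS}=0$ when $\ker\cL\neq\bC I$.

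\emph{Upper bound.} We have $\lambda_{\KMS}\le\varphi_{\cG}$. The quantity $\varphi_{\cG}$ is the spectral gap of the classical reversible chain with stationary weights $s_j$ and edge weights $w_{jk}=\gamma_{jk}s_js_k$. Its conductance is exactly $h_{\rho,\cG}$, and its degrees are $\sum_k w_{jk}/s_j=d_j$. The classical Cheeger inequality \eqref{eq: classical Cheeger ineq} then gives $\varphi_{\cG}\le 2h_{\rho,\cG}$. The lower half of the same inequality gives $\varphi_{\cG}\ge h_{\rho,\cG}^2/(2d_{\max})$. In the edgeless case all quantities vanish and there is nothing to prove.

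\emph{Coherence rates (main step).} Here I will use $n\ge3$. First rewrite
\[
\tfrac{d_j+d_k}{2}-\mathbbm{1}_{e_{jk}}\gamma_{jk}\sqrt{s_js_k}
=\tfrac12\gamma_{jk}(\sqrt{s_j}-\sqrt{s_k})^2+\tfrac12\sum_{l\neq j,k}(\gamma_{jl}+\gamma_{kl})s_l .
\]
Drop the first (nonnegative) term. Then take $S=\{j,k\}$ as a test set in $h_{\rho,\cG}$; it is a proper subset precisely because $n\ge3$. This gives
\[
s_+\sum_{l\ne j,k}(\gamma_{jl}+\gamma_{kl})s_l\ \ge\ \sum_{l\neq j,k}(\gamma_{jl}s_j+\gamma_{kl}s_k)s_l\ \ge\ h_{\rho,\cG}\min\{s_j+s_k,\,1-s_j-s_k\}\ \ge\ h_{\rho,\cG}\,s_- .
\]
Hence each coherence rate is at least $\frac{s_-}{2s_+}h_{\rho,\cG}$, which gives the stated lower bound. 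The only delicate point is choosing a two-point cut rather than singletons, so that the coupling term $\gamma_{jk}\sqrt{s_js_k}$ cancels cleanly.

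\emph{Case $n=2$.} The only admissible $\vec t$ is proportional to $(s_2,-s_1)$, which gives $\varphi_{\cG}=\gamma$. The coherence rate is
\[
\tfrac{\gamma(s_2+s_1)}{2}-\gamma\sqrt{s_1s_2}=\tfrac{\gamma}{2}(\sqrt{s_1}-\sqrt{s_2})^2\le\gamma .
\]
Therefore $\lambda_{\KMS}=\frac{\gamma}{2}(\sqrt{s_1}-\sqrt{s_2})^2$.
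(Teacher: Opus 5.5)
Your proposal is correct and follows essentially the same route as the paper: the KMS-orthogonal splitting into the diagonal block and the two-dimensional blocks $\mathrm{span}\{E_{jk},E_{kj}\}$ gives $\lambda_{\KMS}=\min\{\varphi_{\cG},\min_{j<k}a_{jk}\}$ with $a_{jk}=\frac{d_j+d_k}{2}-\gamma_{jk}\sqrt{s_js_k}$. As in the paper, the classical Cheeger inequality then controls $\varphi_{\cG}$, and for $n\geq 3$ the two-point cut $S=\{j,k\}$ together with $2a_{jk}=\gamma_{jk}(\sqrt{s_j}-\sqrt{s_k})^2+\sum_{l\neq j,k}(\gamma_{jl}+\gamma_{kl})s_l$ bounds the coherence rates; your version is slightly more explicit on the Hermitian restriction and the blockwise splitting of the Rayleigh quotient, and it absorbs disconnected non-edgeless graphs into the (trivially valid) classical Cheeger inequality, where the paper treats that case separately.
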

\begin{proof}
If $\cG$ is disconnected, both $h_{\rho,\cG}$ and $\lambda_{\KMS}$
are zero, so the bounds are immediate, with the stated convention
when $d_{\max}=0$. We may therefore assume that $\cG$ is connected.
Set
\[
a_{jk}=\frac{d_j+d_k}{2}-\gamma_{jk}\sqrt{s_js_k},
\qquad \phi'_{\cG}=\min_{j<k}a_{jk}.
\]
The diagonal and off-diagonal invariant subspaces of $\cL$ give
\[
\lambda_{\KMS}=\min\{\varphi_{\cG},\phi'_{\cG}\}.
\]
The restriction of $\cL$ to the diagonal algebra is a reversible
Markov generator with invariant distribution $\vec{s}$, conductance
$h_{\rho,\cG}$, and maximal exit rate $d_{\max}$. The classical
Cheeger inequality \eqref{eq: classical Cheeger ineq} gives
\[
\frac{h_{\rho,\cG}^2}{2d_{\max}}
\leq\varphi_{\cG}\leq2h_{\rho,\cG}.
\]
In particular, this proves the required upper bound.

For $n\geq3$, choose $j<k$ such that $a_{jk}=\phi'_{\cG}$ and put
$S=\{j,k\}$. Since $S$ is a nonempty proper subset,
\[
\begin{aligned}
h_{\rho,\cG}\min\{s_j+s_k,1-s_j-s_k\}
&\leq\sum_{\ell\notin S}(\gamma_{j\ell}s_j+\gamma_{k\ell}s_k)s_\ell\\
&\leq s_+\sum_{\ell\notin S}(\gamma_{j\ell}+\gamma_{k\ell})s_\ell\\
&\leq2s_+a_{jk}.
\end{aligned}
\]
The last inequality follows from
\[
2a_{jk}=\sum_{\ell\notin S}(\gamma_{j\ell}+\gamma_{k\ell})s_\ell
+\gamma_{jk}(\sqrt{s_j}-\sqrt{s_k})^2.
\]
Moreover, $\min\{s_j+s_k,1-s_j-s_k\}\geq s_-$, so
\[
\phi'_{\cG}\geq\frac{s_-}{2s_+}h_{\rho,\cG}.
\]
Combining the two lower bounds proves the assertion for $n\geq3$.

If $n=2$, then $\varphi_{\cG}=\gamma$ and
\[
\phi'_{\cG}=\frac{\gamma}{2}(\sqrt{s_1}-\sqrt{s_2})^2
\leq\gamma.
\]
Hence $\lambda_{\KMS}=\phi'_{\cG}$, including the case $s_1=s_2$,
when the gap is zero. This completes the proof.
\end{proof}

	\section{Graph-based KMS-Symmetric QMS with nonzero Hamiltonian}
	
	In this section, we introduce a family of KMS-symmetric quantum Markov semigroups associated with an undirected graph, allowing a nonzero Hamiltonian.
	
	Suppose that $\cG=(V(\cG), E(\cG))$ with the vertex set $V(\cG)=\{1,2, \ldots, n\}$ is a finite undirected graph with a positive weight $\gamma_{jk}>0$ on the edge $e_{jk}$ in $E(\cG)$, where $\gamma_{jk}=\gamma_{kj}$. 
	For $e_{jk}\in E(\cG)$, let 
	\begin{align*}
		V_{jk}=\sqrt{\gamma_{jk}}(\sqrt{s_j}E_{jk}+\sqrt{s_k} E_{kj}).
	\end{align*}
	For each $e\in E(\cG)$, we define
	\begin{align*}
		W_e=\sum_{e'\in E(\cG)} t_{ee'}V_{e'}, \quad \widetilde{V}=\sum_{e\in E(\cG)}W_e^*W_e, 
	\end{align*}
	where $t_{ee'}\in \bR$.
	Let $T$ be the matrix whose $(e, e')$ entry is
	\begin{align*}
		\sum_{e''\in  E(\cG)} t_{e'' e} t_{e'' e'}.
	\end{align*}
	Let $t_-$ and $t_+$ denote the smallest and largest eigenvalues of $T$, respectively.
	We shall assume that $T$ is positive definite.
	Then $t_{\pm}>0$.

	Now we define a Hermitian matrix $H$ such that its $(j,j)$ entry is $0$ and $(j,k)$-entry is 
	\begin{align*}
		\frac{i}{2}\frac{\sqrt{s_k}-\sqrt{s_j}}{\sqrt{s_k}+\sqrt{s_j}}\widetilde{V}_{jk}, \quad j \neq k,
	\end{align*}
	where $\widetilde{V}_{jk}$ is the $(j,k)$-entry of $\widetilde{V}$.
	Then the generator $\cL$ is defined as: for $X\in M_n(\bC)$,
	\begin{align}\label{eq:graphlindh}
		\cL(X)=\frac{1}{2}\{\widetilde{V}, X\} -\sum_{e\in E(\cG)}W_e^* X W_e -i[H, X].  
	\end{align}
	
	Let 
	\begin{align*}
		\cL_0(X)= \frac{1}{2}\left\{\sum_{e\in E(\cG)}V_e^*V_e,  X\right\} -\sum_{e\in E(\cG)}V_e^* X V_e.
	\end{align*}
	Note that $\cL_0$ is a KMS-symmetric generator with respect to $\rho$.

	\begin{lemma}\label{lem:graphhambase}
		Suppose that $\cG$ is connected and either $n\geq3$, or $n=2$ with $s_1\neq s_2$.
		The semigroup with generator \eqref{eq:graphlindh} is a primitive KMS-symmetric quantum Markov semigroup.   
		Moreover, $\cL-t_-\cL_0$ and $t_+\cL_0-\cL$ are KMS-symmetric generators.
	\end{lemma}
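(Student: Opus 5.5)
Since $D_\rho=\sum_j s_jE_{jj}$ is diagonal, I would work in the standard structure of KMS-symmetric generators with the Hamiltonian correction. Each $V_{jk}$ satisfies $D_\rho^{1/2}V_{jk}D_\rho^{-1/2}=\sqrt{s_j/s_k}\,\sqrt{\gamma_{jk}}\sqrt{s_j}E_{jk}+\sqrt{s_k/s_j}\,\sqrt{\gamma_{jk}}\sqrt{s_k}E_{kj}$; the point is that $V_e$ is self-adjoint and satisfies the symmetry $V_e=V_e^*$ with weights arranged so that the map $X\mapsto\sum_e V_e^*XV_e$ is KMS-symmetric. Indeed, $\cL_0$ is already known to be KMS-symmetric, which is exactly the statement that the completely positive part $\Psi_0(X)=\sum_e V_e^*XV_e$ is KMS-symmetric. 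I would therefore begin by proving the following general fact. For any real symmetric positive semidefinite matrix $C=(c_{ee'})$, the map $\Psi_C(X)=\sum_{e,e'}c_{ee'}V_e^*XV_{e'}$ is completely positive and KMS-symmetric. This is verified coordinatewise: $\langle X,\Psi_C(Y)\rangle_{\KMS,\rho}=\sum c_{ee'}\Tr(X^*D^{1/2}V_e^*YV_{e'}D^{1/2})$, and one uses that $D^{1/2}V_eD^{-1/2}\cdot(\cdot)$ relates to $V_e$ via the modular identity $\Delta^{1/2}(V_e)=D^{1/2}V_eD^{-1/2}$. Since each $V_e$ is a real combination of $E_{jk},E_{kj}$ whose modular images are again of the same type, symmetry of $c$ and realness give the swap. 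Observing $\sum_e W_e^*XW_e=\Psi_T(X)$ with $T$ as defined, we get that $\Psi_T-t_-\Psi_0=\Psi_{T-t_-I}$ and $t_+\Psi_0-\Psi_T=\Psi_{t_+I-T}$ are completely positive and KMS-symmetric, because $T-t_-I\ge0$ and $t_+I-T\ge0$.

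Next I would invoke the standard characterization (e.g.\ from \cite{FagUma10,VerWir23}) that a generator $\cL(X)=\frac12\{\Psi^*(I),X\}-\Psi(X)-i[H,X]$ with KMS-symmetric completely positive $\Psi$ is KMS-symmetric precisely when $H$ is the unique off-diagonal Hermitian solution of $D^{1/2}(G)D^{-1/2}$-type equations, namely $H_{jk}=\frac{i}{2}\frac{\sqrt{s_k}-\sqrt{s_j}}{\sqrt{s_k}+\sqrt{s_j}}\Psi^*(I)_{jk}$ in the eigenbasis of $D_\rho$. I would verify this directly by computing $\langle X,\cL(Y)\rangle_{\KMS}-\langle\cL(X),Y\rangle_{\KMS}$ for $X=E_{ab},Y=E_{cd}$: the $\Psi$ part cancels, and what remains is linear in $K:=\frac12\Psi^*(I)+iH$. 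Symmetry then reduces to $D^{1/2}KD^{-1/2}$ being equal to the KMS-adjoint expression, which is entrywise the displayed formula. Here $\Psi_T^*(I)=\widetilde V$, and the formula for $H$ is linear in $\widetilde V$. Consequently $\cL-t_-\cL_0$ is exactly the generator built from $\Psi_{T-t_-I}$ with its matching Hamiltonian; note $\cL_0$ has zero Hamiltonian, which is consistent because $\sum V_e^*V_e$ is diagonal. Likewise $t_+\cL_0-\cL$ is the generator built from $\Psi_{t_+I-T}$. Hence both are KMS-symmetric generators, and in particular so is $\cL$.

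For primitivity, I would use the Dirichlet forms. From $\cL-t_-\cL_0\ge0$ in KMS sense, we get $\langle X,\cL X\rangle_{\KMS}\ge t_-\langle X,\cL_0X\rangle_{\KMS}$. So $\ker\cL\subseteq\ker\cL_0$, and since $\cL$ is self-adjoint and nonnegative for the KMS inner product, its kernel is the null space of the form. Finally, $\ker\cL_0=\bC I$ by the computation in Section~\ref{sec:graph}. On the diagonal, the connected graph gives a classical irreducible chain. Off the diagonal, by \eqref{eq:lindbase2} the relevant quantities are $a_{jk}>0$. Using the identity for $2a_{jk}$ from the proof of Theorem~\ref{thm:graphcheeger}, positivity holds for $n\ge3$ via a neighbor $\ell\notin\{j,k\}$ of $j$ or $k$ (connectedness), and for $n=2$ it holds via $s_1\ne s_2$. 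The faithful state $\rho$ is invariant by KMS symmetry. The main obstacle is the entrywise verification of the Hamiltonian formula; I would do it on matrix units.
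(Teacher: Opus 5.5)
Your handling of KMS symmetry and of the two difference generators follows the paper's proof. You parametrize by a real positive semidefinite $S$ on the edge set and write $\cL_S$ for the generator built from $\Psi_S$ and its matching Hamiltonian. You then note that $B_S=\sum_{e,e'}S_{ee'}V_e^*V_{e'}$ and $H_S$ depend linearly on $S$, and that $H_I=0$ because $\sum_eV_e^*V_e=\sum_jd_jE_{jj}$ is diagonal. This gives $\cL-t_-\cL_0=\cL_{T-t_-I}$ and $t_+\cL_0-\cL=\cL_{t_+I-T}$. Your entrywise condition on $H$ is the paper's identity $H_SD_\rho^{1/2}+D_\rho^{1/2}H_S=\frac{i}{2}(B_SD_\rho^{1/2}-D_\rho^{1/2}B_S)$.

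Two statements in your justification are wrong, though.

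First, $V_e$ is not self-adjoint: $V_{jk}^*=\sqrt{\gamma_{jk}}(\sqrt{s_k}E_{jk}+\sqrt{s_j}E_{kj})\neq V_{jk}$ unless $s_j=s_k$. Nor does ``real combination of $E_{jk},E_{kj}$'' suffice: for $V=aE_{jk}+bE_{kj}$ with $a,b$ real, the map $X\mapsto V^*XV$ is KMS-symmetric only if $|a|\sqrt{s_k}=|b|\sqrt{s_j}$. The identity that does the work is $V_eD_\rho^{1/2}=D_\rho^{1/2}V_e^*$, i.e.\ $V_{jk}D_\rho^{1/2}=\sqrt{\gamma_{jk}s_js_k}\,(E_{jk}+E_{kj})$ is Hermitian. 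It gives $\langle X,V_e^*YV_{e'}\rangle_{\KMS,\rho}=\langle V_e^*XV_{e'},Y\rangle_{\KMS,\rho}$ for every pair $e,e'$, hence KMS symmetry of $\Psi_C$ for every real $C$. Knowing only that $\cL_0$ (equivalently $\Psi_0$) is KMS-symmetric would not control the cross terms $e\neq e'$.

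Second, the anticommutator must contain $\Psi(I)=\sum_aU_a^*U_a$, so that $\Psi_T(I)=\widetilde V$. The matrix $\Psi_T^*(I)=\sum_eW_eW_e^*$ is different: already $V_{jk}^*V_{jk}=\gamma_{jk}(s_kE_{jj}+s_jE_{kk})$ while $V_{jk}V_{jk}^*=\gamma_{jk}(s_jE_{jj}+s_kE_{kk})$. With $\Psi^*(I)$ in place of $\Psi(I)$, the generator would in general not annihilate $I$.

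For primitivity you take a genuinely different route, and it is correct. The paper argues algebraically. If $\cL(X)=0$, applying the faithful invariant state to $\cL(X^*)X+X^*\cL(X)-\cL(X^*X)=\sum_e[W_e,X]^*[W_e,X]$ gives $[W_e,X]=[W_e^*,X]=0$. Invertibility of $(t_{ee'})$, which follows from $T>0$, transfers this to every $V_e,V_e^*$, and these generate $M_n(\bC)$ under the hypotheses.

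You instead feed the comparison back in. Since $\cL-t_-\cL_0$ has nonnegative KMS form and $t_->0$, you get $\ker\cL\subseteq\ker\cL_0$. Then $\ker\cL_0=\bC I$ follows from the invariant-subspace decomposition of Section~\ref{sec:graph}: an irreducible classical chain on the diagonal, and on each $\mathrm{span}\{E_{jk},E_{kj}\}$ a $2\times2$ block with eigenvalues $a_{jk}$ and $\frac{d_j+d_k}{2}+\gamma_{jk}\sqrt{s_js_k}$. Both are positive by the $2a_{jk}$ identity, together with connectivity for $n\ge3$ or $s_1\neq s_2$ for $n=2$.

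Your route avoids the algebra-generation step, which the paper only sketches, and yields $\lambda_{\KMS}(\cL)\ge t_-\lambda_{\KMS}(\cL_0)>0$ for free. Its cost is reliance on the explicit spectral analysis of $\cL_0$. It also relies on nonnegativity of the KMS form of a generator that need not be primitive. That fact is standard (KMS-symmetric unital completely positive maps with invariant $\rho$ are KMS-contractions, or use \eqref{eq:derivation}), but you should state it. The paper's argument needs only the faithful invariant state and is independent of the comparison.
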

	\begin{proof}
For a real symmetric positive semidefinite matrix \(S=(S_{ee'})\)
indexed by \(E(\cG)\), choose a real factorization \(S=R^{\mathsf T}R\)
and set
\[
 U_a=\sum_e R_{ae}V_e,\qquad
 B_S=\sum_a U_a^*U_a=\sum_{e,e'}S_{ee'}V_e^*V_{e'},
\]
where $R_{ae}$ is the $(a, e)$-entry of $R$.
Let \(H_S\) have zero diagonal and entries
\[
 (H_S)_{jk}
 =\frac{i}{2}\frac{\sqrt{s_k}-\sqrt{s_j}}
                     {\sqrt{s_k}+\sqrt{s_j}}(B_S)_{jk},
 \qquad j\ne k,
\]
and define
\[
 \cL_S(X)=\frac12\{B_S,X\}
          -\sum_a U_a^*XU_a-i[H_S,X].
\]
This is a quantum Markov generator and \(\cL_S(I)=0\).
The real coefficients in \(U_a\) and the identities
\begin{equation}\label{eq:graphrelation1}
 V_eD_\rho^{1/2}=D_\rho^{1/2}V_e^*,\qquad
 H_SD_\rho^{1/2}+D_\rho^{1/2}H_S
 =\frac{i}{2}(B_SD_\rho^{1/2}-D_\rho^{1/2}B_S)
\end{equation}
give
\[
 \cL_S^*(D_\rho^{1/2}XD_\rho^{1/2})
 =D_\rho^{1/2}\cL_S(X)D_\rho^{1/2}.
\]
Thus \(\cL_S\) is KMS-symmetric with respect to \(\rho\), and
\(\cL_S^*(D_\rho)=0\). For \(S=T\), this construction gives \(\cL\).
It also shows that \(H=0\) if and only if
\(\widetilde V D_\rho^{1/2}=D_\rho^{1/2}\widetilde V\).

To prove primitivity, let \(\cL(X)=0\). Apply the faithful invariant
state \(\rho\) to
\[
 \cL(X^*)X+X^*\cL(X)-\cL(X^*X)
 =\sum_e[W_e,X]^*[W_e,X].
\]
It follows that \([W_e,X]=0\); applying the same argument to \(X^*\)
also gives \([W_e^*,X]=0\). Since \(T>0\), the \(W_e\)'s span the
same space as the \(V_e\)'s, so \(X\) commutes with every \(V_e,V_e^*\).
These matrices generate \(M_n(\bC)\) under the stated assumptions.
Indeed, if \(n\ge3\), two adjacent edges \(ij,jk\) with distinct
endpoints give a nonzero multiple of \(E_{ik}\) as \(V_{ij}V_{jk}\).
Together with its adjoint this gives \(E_{ii}\); multiplication by
incident edge matrices and their adjoints, followed by connectivity,
then gives all matrix units. If \(n=2\) and \(s_1\ne s_2\), the
matrices \(V_{12},V_{12}^*\) already span \(E_{12},E_{21}\).
Consequently \(X\in\bC I\), proving primitivity.

Finally, \(B_S\), \(H_S\), and \(\cL_S\) depend linearly on \(S\).
For the identity matrix on the edge index set, \(B_I\) is diagonal,
so \(H_I=0\) and \(\cL_I=\cL_0\). Since
\[
 T-t_-I\ge0,\qquad t_+I-T\ge0,
\]
the construction above gives
\[
 \cL-t_-\cL_0=\cL_{T-t_-I},
 \qquad
 t_+\cL_0-\cL=\cL_{t_+I-T}.
\]
Both differences are therefore KMS-symmetric quantum Markov
generators with the common invariant density matrix \(D_\rho\).
\end{proof}
	
	\begin{proposition}
		Assume the hypotheses of Lemma~\ref{lem:graphhambase}, with generator \eqref{eq:graphlindh}.
		We have that 
		\begin{enumerate}
			\item $t_-\lambda_{\KMS}(\cL_0) \leq \lambda_{\KMS}(\cL)\leq t_+ \lambda_{\KMS}(\cL_0)$.
			\item $t_-\alpha_{MLSI}(\cL_0) \leq \alpha_{MLSI}(\cL)\leq t_+ \alpha_{MLSI}(\cL_0)$.
			\item $t_-\alpha_{CMLSI}(\cL_0) \leq \alpha_{CMLSI}(\cL)\leq t_+ \alpha_{CMLSI}(\cL_0)$.
		\end{enumerate}
	\end{proposition}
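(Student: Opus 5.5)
The proof rests on the comparison statement of Lemma~\ref{lem:graphhambase}: both $\cL-t_-\cL_0$ and $t_+\cL_0-\cL$ are KMS-symmetric quantum Markov generators with common invariant density $D_\rho$. Each of the three functionals (Dirichlet form, Fisher information, and its tensorized version) is monotone in the generator over the cone of such generators. The three inequalities will then follow from the sandwich $t_-\cL_0\le\cL\le t_+\cL_0$ in this cone, together with the facts that $\cL$ and $\cL_0$ are both primitive with the same invariant state and that all three quantities scale linearly under $\cL\mapsto c\cL$.

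For (1), let $\cL_1$ denote either $\cL-t_-\cL_0$ or $t_+\cL_0-\cL$. Since $\cL_1$ is KMS-symmetric, the representation \eqref{eq:derivation} applies to it, so $\langle X,\cL_1(X)\rangle_{\KMS,\rho}\ge 0$ for all $X$. Hence
\[
t_-\langle X,\cL_0(X)\rangle_{\KMS,\rho}\le\langle X,\cL(X)\rangle_{\KMS,\rho}\le t_+\langle X,\cL_0(X)\rangle_{\KMS,\rho}.
\]
Both semigroups are primitive with the same $\rho$, so $\lambda_{\KMS}$ is the infimum of these Rayleigh quotients over the same set $\{X=X^*\neq0,\ \rho(X)=0\}$. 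Taking infima gives (1).

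For (2) and (3), I would use the Fisher information characterizations \eqref{eq:mlsi2} and \eqref{eq:cmlsi}. Since $\cL$ and $\cL_0$ share the invariant state, $\mathbb{E}^*_\Phi(D)=\Tr(D)D_\rho$ for both, and in the complete case the reference state is $D_\rho\otimes D_R$ for both. It therefore suffices to show the pointwise inequality
\[
\cI_{\cL_1\otimes\Id_m}(D)=\Tr\bigl((\cL_1^*\otimes\Id_m)(D)\,(\log D-\log(D_\rho\otimes D_R))\bigr)\ge0
\]
for every KMS-symmetric generator $\cL_1$ with $\cL_1^*(D_\rho)=0$. The same expression is linear in the generator, so this inequality yields $t_-\cI_{\cL_0\otimes\Id}\le\cI_{\cL\otimes\Id}\le t_+\cI_{\cL_0\otimes\Id}$. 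Combined with the identical relative entropy on the right-hand side and linear scaling of the constants, this gives (3), and the case $m=1$ gives (2).

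\textbf{Main obstacle.} The hard step is nonnegativity of the Fisher information for the possibly non-primitive generator $\cL_1$ when the entropy is taken relative to $D_\rho\otimes D_R$ rather than relative to $\cL_1$'s own conditional expectation. My first attempt is the standard semigroup argument. The semigroup $e^{-t\cL_1^*\otimes\Id}$ fixes $D_\rho\otimes D_R$, because $\cL_1^*(D_\rho)=0$ and $\cL_1^*$ acts only on the first factor. By data processing, $H(e^{-t\cL_1^*\otimes\Id}(D)\,\|\,D_\rho\otimes D_R)$ is nonincreasing in $t$. Its derivative at $t=0$ is $-\Tr\bigl((\cL_1^*\otimes\Id)(D)(\log D-\log(D_\rho\otimes D_R))\bigr)$, where the term $\Tr\bigl((\cL_1^*\otimes\Id)(D)\bigr)=0$ is dropped because the generator is trace-preserving. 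This derivative is therefore $\le0$, which is exactly the needed inequality. A cleaner alternative, if the derivative computation becomes delicate, is the integral representation \eqref{eq:relative-entropy-integral}. I would check carefully that the derivative computation is legitimate for faithful $D$; in finite dimensions this is routine.
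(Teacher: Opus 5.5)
Your proposal is correct and follows essentially the same route as the paper: the sandwich from Lemma~\ref{lem:graphhambase} gives nonnegative KMS Dirichlet forms for $\cL-t_-\cL_0$ and $t_+\cL_0-\cL$ (hence the gap comparison), and nonnegativity of the entropy production relative to the common reference state $D_\rho\otimes D_R$ follows by data processing and differentiation at $t=0$, which gives the MLSI/CMLSI comparisons. The one notational difference is that the paper introduces a separate symbol $F_{\mathcal A}$ for this entropy production, whereas you write it as $\cI_{\cL_1\otimes\Id_m}$; under the paper's definition that notation would instead refer to $\cL_1$'s own, possibly non-primitive, conditional expectation, so you should rename it, as you already implicitly recognize.
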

	\begin{proof}
Let \(\mathcal{A}_-=\cL-t_-\cL_0\) and
\(\mathcal{A}_+=t_+\cL_0-\cL\). By Lemma \ref{lem:graphhambase}, these are
KMS-symmetric quantum Markov generators preserving \(D_\rho\).
Their KMS quadratic forms are nonnegative, and hence
\[
 t_-\langle X,\cL_0(X)\rangle_{\KMS,\rho}
 \le \langle X,\cL(X)\rangle_{\KMS,\rho}
 \le t_+\langle X,\cL_0(X)\rangle_{\KMS,\rho}.
\]
The Rayleigh quotient on \(\{X:\rho(X)=0\}\) gives the spectral-gap
comparison.

For the entropy comparisons, fix \(m\ge1\) and a faithful density
matrix \(D\in M_n\otimes M_m\), and write
\[
 D_R=(\Tr_{M_n}\otimes\Id_m)(D),\qquad
 D_\eta=D_\rho\otimes D_R.
\]
For a generator \(\mathcal{A}\) preserving \(D_\rho\), define its entropy
production relative to this common reference state by
\[
 F_{\mathcal{A}}(D)
 =\Tr\!\left(((\mathcal{A}^*\otimes\Id_m)(D))
                  (\log D-\log D_\eta)\right).
\]
The state \(\eta\) is invariant under
\(e^{-t\mathcal{A}^*}\otimes\Id_m\). Data processing therefore gives
\[
 F_{\mathcal{A}}(D)
 =-\left.\frac{d}{dt}
 H\!\left((e^{-t\mathcal{A}^*}\otimes\Id_m)(D)\,\middle\|\,D_\eta\right)
 \right|_{t=0}
 \ge0.
\]
Applying this to \(\mathcal{A}_-\) and \(\mathcal{A}_+\), and using linearity in the
generator, yields
\[
 t_-F_{\cL_0}(D)\le F_{\cL}(D)\le t_+F_{\cL_0}(D).
\]
Both \(\cL_0\) and \(\cL\) are primitive with invariant density
matrix \(D_\rho\), so
\(F_{\cL_0}(D)=\cI_{\cL_0\otimes\Id_m}(D)\) and
\(F_{\cL}(D)=\cI_{\cL\otimes\Id_m}(D)\), with the same relative
entropy \(H(D\|D_\eta)\). Taking infima of
\(\displaystyle \frac{F_{\mathcal{A}}(D)}{2H(D\|D_\eta)}\) over \(D\ne D_\eta\), first for \(m=1\)
and then over all \(m\ge1\), proves the MLSI and CMLSI comparisons.
\end{proof}
	
	\begin{example}\label{ex:graphham}
		Let 
		\begin{align*}
			D_\rho=\frac{1}{14}\begin{pmatrix}
				1 & 0& 0 \\ 0 & 4 & 0 \\ 0 & 0& 9
			\end{pmatrix}, \quad 
			V_1=\begin{pmatrix}
				0 & 1 & 0 \\ 2 & 0 & 0 \\ 0 & 0 & 0
			\end{pmatrix}, \quad 
			V_2=\begin{pmatrix}
				0 & 0 & 0 \\ 0 & 0 & 2 \\ 0 & 3 & 0
			\end{pmatrix}, 
			\quad 
			H=\theta\begin{pmatrix}
				0 & 0& i \\ 0 & 0 & 0 \\ -i & 0& 0
			\end{pmatrix},
		\end{align*}
		where $-1\leq \theta \leq 1$.
		Let
		\begin{align*}
			W_+=\sqrt{\frac{1+\theta}{2}}(V_1+V_2), \quad W_-=\sqrt{\frac{1-\theta}{2}}(V_1-V_2).
		\end{align*}
		The matrix $\widetilde{V}=\begin{pmatrix}
			4 & 0& 4\theta \\ 0 & 10 & 0 \\ 4\theta & 0& 4
		\end{pmatrix}$.
		Then 
		\begin{align*}
			\cL(X)=\frac{1}{2} \{\widetilde{V}, X\}-W_+^*XW_+ - W_-^*XW_--i[H, X]
		\end{align*}
		is a KMS-symmetric quantum Markov generator. Indeed,
\[
 W_+^*W_++W_-^*W_-=\widetilde V,
 \qquad
 T=\begin{pmatrix}1&\theta\\ \theta&1\end{pmatrix}\ge0,
\]
and the KMS identities in \eqref{eq:graphrelation1} hold for these
matrices. 
Viewed as a linear operator on $M_3(\mathbb C)$, $\mathcal L$ has
the following nine eigenvalues, counted with multiplicity:
\[
0,\quad 4,\quad
3\pm\sqrt{4+3\theta^2},\quad
11\pm\sqrt{4+27\theta^2},\quad
r_1(\theta),r_2(\theta),r_3(\theta),
\]
where $r_1(\theta)\leq r_2(\theta)\leq r_3(\theta)$ are the three roots,
counted with multiplicity, of the cubic polynomial
\[
C_\theta(x)
=x^3-22x^2+(128-36\theta^2)x-224+168\theta^2.
\]

For every \(|\theta|\le1\), zero is simple and all other eigenvalues
are positive. 
Let $a=3-\sqrt{4+3\theta^2}\in[3-\sqrt7,1]$.
Using $a^2-6a+5-3\theta^2=0$, we obtain
\[
C_\theta(a)
=-24-6a-(1-\theta^2)(120-33a)<0.
\]
For $0<|\theta|\leq1$, we also have
\[
C_\theta(4)=24\theta^2>0,
\qquad C_\theta(14)=-336\theta^2<0.
\]
Since $C_\theta(x)\to+\infty$ as $x\to+\infty$, its three roots
lie in $(a,4)$, $(4,14)$, and $(14,\infty)$, respectively.
For $\theta=0$, the factorization
$C_0(x)=(x-4)^2(x-14)$ gives the same lower bound.
Together with
\[
11-\sqrt{4+27\theta^2}\geq11-\sqrt{31}>1\geq a,
\]
this shows that zero is a simple eigenvalue and the smallest
nonzero eigenvalue is $a>0$. Thus the QMS is primitive, with
\[
\lambda_{\mathrm{KMS}}=3-\sqrt{4+3\theta^2}.
\]
For \(\theta\ne0\), the Hamiltonian is nonzero.
	\end{example}


	\section{Connection to MLSI and CMLSI}
	
	In \cite{GaoRou22}, GNS-symmetric quantum Markov semigroups were shown to satisfy the complete modified logarithmic Sobolev inequality. 
	However, Liu and the present authors \cite{LWW26} showed that KMS-symmetric quantum Markov semigroups do not, in general, satisfy the complete modified logarithmic Sobolev inequality. 
	In this section, we study KMS-symmetric quantum Markov semigroups with Poissonized generators and show that they satisfy the complete modified logarithmic Sobolev inequality (CMLSI). 
	A positive KMS spectral gap does not, by itself, guarantee CMLSI. Nevertheless, the quantum Cheeger inequality yields a CMLSI bound for semigroups with Poissonized generators.
	\begin{lemma}\label{lem:kmspoisson}
		Let $\Psi:M_n(\mathbb C)\to M_n(\mathbb C)$ be a unital completely positive map satisfying \(\Psi^*(D_\rho)=D_\rho\), and assume that \(\Psi\) is KMS-symmetric with respect to a faithful invariant state $\rho$.
		If the fixed-point space of $\Psi$ is $\bC I$, then, for every $\gamma>0$, the operator
		\begin{align}\label{eq:poissongenerator}
			\cL=\gamma(\Id-\Psi) 
		\end{align}
		generates a primitive KMS-symmetric quantum Markov semigroup.
		If $\Psi$ is not GNS-symmetric, then $\cL$ is not GNS-symmetric.
	\end{lemma}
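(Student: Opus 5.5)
We verify the three properties of $\cL=\gamma(\Id-\Psi)$ in turn: that it generates a quantum Markov semigroup, that the semigroup is KMS-symmetric and primitive, and that GNS-asymmetry passes from $\Psi$ to $\cL$.

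\emph{Semigroup structure.} Since $\Psi$ is unital completely positive, we have
\[
\Phi_t=e^{-t\cL}=e^{-\gamma t}e^{\gamma t\Psi}=e^{-\gamma t}\sum_{k\ge0}\frac{(\gamma t)^k}{k!}\Psi^k .
\]
This is a convex combination of the unital completely positive maps $\Psi^k$. Hence each $\Phi_t$ is unital and completely positive, and $\{\Phi_t\}_{t\geq 0}$ is a quantum Markov semigroup.

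\emph{KMS symmetry.} KMS symmetry of $\Psi$ means
\[
\langle X,\Psi(Y)\rangle_{\KMS,\rho}=\langle \Psi(X),Y\rangle_{\KMS,\rho},
\]
that is, $\Psi$ is self-adjoint for $\langle\cdot,\cdot\rangle_{\KMS,\rho}$. Every power $\Psi^k$ is then self-adjoint as well. So is every norm-convergent series in $\Psi$ with real coefficients, and in particular each $\Phi_t$. Therefore $\{\Phi_t\}$ is KMS-symmetric with respect to $\rho$. Invariance of $\rho$ follows from $\Psi^*(D_\rho)=D_\rho$, since then $\cL^*(D_\rho)=0$.

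\emph{Primitivity.} We have $\ker\cL=\{X:\Psi(X)=X\}=\bC I$ by hypothesis. Together with the faithful invariant state $\rho$, this is exactly the definition of primitivity recalled in Section 2.

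\emph{GNS symmetry.} GNS symmetry of a map $\mathcal{T}$ means self-adjointness for
\[
\langle X,Y\rangle_{\mathrm{GNS}}=\Tr(X^*YD_\rho).
\]
Suppose $\cL$ were GNS-symmetric. The identity map is trivially GNS-symmetric, so
\[
\Psi=\Id-\gamma^{-1}\cL
\]
would be GNS-symmetric too, since it is a real linear combination of GNS-self-adjoint operators and $\gamma>0$. This contradicts the assumption on $\Psi$.

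There is no serious obstacle in this argument. The only point needing care is to state the semigroup-level definitions precisely: KMS and GNS symmetry of $\Phi_t$ are equivalent to the corresponding self-adjointness of $\cL$. This holds because the $\Phi_t$ are analytic functions of $\cL$, and conversely $\cL=-\frac{d}{dt}\Phi_t\big|_{t=0}$.
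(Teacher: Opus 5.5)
Your proof is correct, and its skeleton matches the paper's. Both arguments establish the Markov property and KMS symmetry, use $\ker\cL=\{X:\Psi(X)=X\}=\bC I$ for primitivity, and use $\Psi=\Id-\gamma^{-1}\cL$ for the GNS statement. The difference is in how the first two properties are verified.

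The paper takes a Kraus decomposition $\Psi(X)=\sum_j V_j^*XV_j$ and uses $\sum_j V_j^*V_j=I$ to rewrite $\cL$ in GKSL form, with jump operators $\sqrt{\gamma}V_j$ and zero Hamiltonian. It then invokes Lindblad's theorem. It checks KMS symmetry at the generator level via $\cL^*(D_\rho^{1/2}XD_\rho^{1/2})=D_\rho^{1/2}\cL(X)D_\rho^{1/2}$.

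You instead use the Poisson representation $\Phi_t=e^{-\gamma t}\sum_{k\ge0}\frac{(\gamma t)^k}{k!}\Psi^k$. This exhibits each $\Phi_t$ directly as a convex combination of the unital completely positive iterates $\Psi^k$. You then obtain KMS symmetry of every $\Phi_t$ from the KMS self-adjointness of $\Psi$ by functional calculus.

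Your route is more self-contained: it needs no appeal to the GKSL theorem and checks the semigroup-level definitions directly. The paper's route has the advantage of producing an explicit Lindblad form for $\cL$, which fits the GKSL framework used elsewhere in the paper.

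One minor remark: the hypothesis $\Psi^*(D_\rho)=D_\rho$, which you use for invariance of $\rho$, is already implied by unitality and KMS symmetry. Take $X=I$ in the intertwining identity $\Psi^*(D_\rho^{1/2}XD_\rho^{1/2})=D_\rho^{1/2}\Psi(X)D_\rho^{1/2}$.
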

	\begin{proof}
		Suppose that $\displaystyle \Psi(X)=\sum_{j=1}^m V_j^* X V_j$, where $V_j\in M_n(\bC)$.
		By the fact that $\Psi(I)=I$, we have that $\displaystyle \sum_{j=1}^m V_j^*V_j=I$.
		This shows that 
		\begin{align*}
			\cL(X)=\gamma \left(\sum_{j=1}^m \frac{1}{2}\{V_j^*V_j, X\} -\sum_{j=1}^m V_j^* X V_j \right),
		\end{align*}
		and $\cL$ is a Lindbladian.
		If $\Psi$ is KMS-symmetric with respect to $\rho$, then 
		\begin{align*}
			\cL^*(D_\rho^{1/2}XD_\rho^{1/2})
			=&\gamma D_\rho^{1/2}XD_\rho^{1/2}- \gamma \Psi^*(D_\rho^{1/2}XD_\rho^{1/2}) \\
			= & \gamma D_\rho^{1/2}XD_\rho^{1/2}- \gamma D_\rho^{1/2}\Psi(X)D_\rho^{1/2}\\
			=&  D_\rho^{1/2}\cL(X)D_\rho^{1/2}
		\end{align*}
		i.e. $\cL$ is a KMS-symmetric generator.
		Moreover, $\cL$ is GNS-symmetric if and only if $\Psi$ is GNS-symmetric.
		
		Suppose that $X\in \ker\cL$.
		Then $\Psi(X)=X$.
		By assumption, $\ker\cL=\bC I$, so $\cL$ is a primitive quantum Markov generator.
	\end{proof}
	
	\begin{corollary}
		Suppose that $\cJ$ is a primitive KMS-symmetric generator.
		For every $s>0$ and $\gamma>0$, the operator $\cL=\gamma(\Id-e^{-s\cJ})$ is a primitive KMS-symmetric generator.
	\end{corollary}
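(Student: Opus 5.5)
The plan is to apply Lemma~\ref{lem:kmspoisson} with $\Psi=e^{-s\cJ}$. This requires four checks: that $\Psi$ is unital completely positive, that it preserves $D_\rho$, that it is KMS-symmetric with respect to $\rho$, and that its fixed-point space is $\bC I$.

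First, since $\cJ$ generates a quantum Markov semigroup $\{e^{-t\cJ}\}_{t\ge0}$, each $e^{-s\cJ}$ is a unital completely positive map. Next, KMS symmetry of the semigroup is exactly the defining identity
\[
\Tr\bigl(X^*D_\rho^{1/2}e^{-s\cJ}(Y)D_\rho^{1/2}\bigr)=\Tr\bigl(e^{-s\cJ}(X)^*D_\rho^{1/2}YD_\rho^{1/2}\bigr)
\]
at time $t=s$, so $\Psi$ is KMS-symmetric. Taking $Y=I$ and using $\Psi(I)=I$ gives $\Tr(X^*D_\rho)=\Tr(\Psi(X)^*D_\rho)$ for all $X$. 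Hence $\Psi^*(D_\rho)=D_\rho$; this is just the invariance $\rho\Phi_t=\rho$ recorded in the preliminaries.

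The main step is the fixed-point space. KMS symmetry makes $\cJ$ self-adjoint with respect to $\langle\cdot,\cdot\rangle_{\KMS,\rho}$. The quadratic form \eqref{eq:derivation} makes $\cJ$ positive semidefinite there. Therefore $\cJ$ diagonalizes in a KMS-orthonormal basis with eigenvalues $\mu\ge0$, and $e^{-s\cJ}$ acts on the same eigenvectors by $e^{-s\mu}$. Now suppose $\Psi(X)=X$. Then $X$ lies in the span of eigenvectors with $e^{-s\mu}=1$. Since $s>0$ and $\mu$ is real, this forces $\mu=0$, so $X\in\ker\cJ=\bC I$ by primitivity. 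Self-adjointness matters here: without it one would have to rule out nonreal $\mu$ with $s\mu\in 2\pi i\mathbb{Z}$.

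With these four facts, Lemma~\ref{lem:kmspoisson} shows that $\cL=\gamma(\Id-e^{-s\cJ})$ generates a primitive KMS-symmetric quantum Markov semigroup for every $\gamma>0$, which is the assertion.
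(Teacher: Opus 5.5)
Your proposal is correct and takes the same route as the paper: the paper's proof is just the one-line appeal to Lemma~\ref{lem:kmspoisson} with $\Psi=e^{-s\cJ}$. You make its hypotheses explicit, and your fixed-point argument supplies the step the paper leaves implicit: KMS self-adjointness of $\cJ$ gives a real spectrum, so $e^{-s\mu}=1$ with $s>0$ forces $\mu=0$ and hence $X\in\ker\cJ=\bC I$.
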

	\begin{proof}
		It follows from Lemma \ref{lem:kmspoisson}.
	\end{proof}

	\begin{lemma}\label{lem:entropybd}
		For every faithful density matrix $D$, we have 
		\begin{align*}
			H(D \| D_\rho) \leq \log(1+\|D_\rho^{-1/2} D D_\rho^{-1/2}-I\|_{\KMS, \rho}^2) \leq \|D_\rho^{-1/2} D D_\rho^{-1/2}-I\|_{\KMS, \rho}^2.
		\end{align*}
	\end{lemma}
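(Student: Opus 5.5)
The plan is to bound the relative entropy by a Rényi-type (collision or $\chi^2$) divergence and then use $\log(1+x)\le x$. Set $Y=D_\rho^{-1/2}DD_\rho^{-1/2}$. As in the proof of Theorem \ref{thm: Quantum Cheeger ineq}, and using $\rho(Y)=\Tr(D)=1$,
\[
\|Y-I\|_{\KMS,\rho}^2=\Tr\!\left[(D_\rho^{-1/4}DD_\rho^{-1/4})^2\right]-1 .
\]
So the first inequality is equivalent to
\[
H(D\|D_\rho)\le \log \Tr\!\left[(D_\rho^{-1/4}DD_\rho^{-1/4})^2\right].
\]
The right-hand side is the logarithm of $\exp\widetilde{D}_2(D\|D_\rho)$, the sandwiched Rényi divergence of order $2$. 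The second inequality is just $\log(1+x)\le x$ for $x\ge0$, so all the work is in the first one.

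To prove it, I will use monotonicity of the sandwiched Rényi divergences $\widetilde D_\alpha$ in $\alpha$, together with $\widetilde D_1=H$. This gives $H(D\|D_\rho)\le \widetilde D_2(D\|D_\rho)=\log\Tr[(D_\rho^{-1/4}DD_\rho^{-1/4})^2]$, which is exactly the claim.

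If a more self-contained argument is preferred, I would go through the Petz/Umegaki route instead. By Araki--Lieb--Thirring or the Golden--Thompson-type inequality, $H(D\|D_\rho)\le \Tr\bigl(D\log(D_\rho^{-1/2}DD_\rho^{-1/2})\bigr)$; this is the known bound $H\le \Tr D\log(D^{1/2}D_\rho^{-1}D^{1/2})$. Then I apply concavity of $\log$ via Jensen with respect to the spectral measure of the density $D$. This requires arranging the expression as an expectation in a suitable state, and here the sandwiched form with exponents $1/4$ is what appears naturally.

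The main obstacle is this middle step: justifying that relative entropy is dominated by the order-$2$ sandwiched quantity with the specific $D_\rho^{-1/4}$ placement. I would cite the monotonicity of sandwiched Rényi divergences in $\alpha$ (Müller-Lennert--Dupuis--Szehr--Fehr--Tomamichel; Beigi), rather than reprove it, and check that the sandwiched expression at $\alpha=2$ matches the KMS norm identity above.
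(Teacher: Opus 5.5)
Your main argument is the paper's proof: you write $\log\Tr\bigl[(D_\rho^{-1/4}DD_\rho^{-1/4})^2\bigr]=\widetilde H_2(D\|D_\rho)=\log\bigl(1+\|D_\rho^{-1/2}DD_\rho^{-1/2}-I\|_{\KMS,\rho}^2\bigr)$, bound $H(D\|D_\rho)=\lim_{\alpha\to1}\widetilde H_\alpha(D\|D_\rho)\le\widetilde H_2(D\|D_\rho)$ using monotonicity of the sandwiched R\'enyi divergence in $\alpha$ (M\"uller-Lennert et al.), and finish with $\log(1+x)\le x$. You should discard the ``self-contained'' alternative, because it does not prove the lemma: Golden--Thompson gives the reverse inequality $H(D\|D_\rho)\ge\Tr\bigl(D\log(D_\rho^{-1/2}DD_\rho^{-1/2})\bigr)$, and the genuine Hiai--Petz upper bound $H(D\|D_\rho)\le\Tr\bigl(D\log(D^{1/2}D_\rho^{-1}D^{1/2})\bigr)$ combined with Jensen only yields $\log\Tr(D^2D_\rho^{-1})$, which in general strictly exceeds the KMS quantity $\log\Tr(DD_\rho^{-1/2}DD_\rho^{-1/2})$.
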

	\begin{proof}
		For any $\alpha \in (0,1) \bigcup (1,\infty)$,  the order-$\alpha$
		R\'enyi divergence of positive densities $D_2>0$ and $D_1>0$ is defined by
		\begin{equation*}
			\widetilde{H}_\alpha(D_2 \|D_1)=\displaystyle
			\frac{1}{\alpha-1}
			\log\!\left(
			\frac{
				\operatorname{Tr}\!\left[
				\left(
				D_1^{\frac{1-\alpha}{2\alpha}}
				D_2
				D_1^{\frac{1-\alpha}{2\alpha}}
				\right)^\alpha
				\right]}
			{\operatorname{Tr}(D_2)}
			\right).
		\end{equation*}
		By Theorem~7 in \cite{MFOSM2013}, for fixed $D_1,D_2>0$, the map
		$\alpha\mapsto\widetilde{H}_\alpha(D_1\Vert D_2)$ is monotonically increasing. And Theorem~5 in \cite{MFOSM2013} yields $H(D_2 \|D_1 )=\lim\limits_{\alpha \rightarrow 1^+} \widetilde{H}_\alpha(D_2 \|D_1 )$.
		Therefore,  we obtain
		\begin{align*}
			H(D\|D_\rho)
			&\leq \widetilde{H}_2(D\|D_\rho) = \log \operatorname{Tr}\!\left[
			\left(D_\rho^{-1/4} D D_\rho^{-1/4}\right)^2
			\right] \\
			&= \log\!\left(
			1+\left\|D_\rho^{-1/2} D D_\rho^{-1/2}-I
			\right\|_{\mathrm{KMS},\rho}^{2}
			\right) \\
			&\leq \left\|D_\rho^{-1/2} D D_\rho^{-1/2}-I
			\right\|_{\mathrm{KMS},\rho}^{2}.
		\end{align*}
This completes the proof of the lemma.
	\end{proof}

	\begin{theorem}\label{thm:poissonmlsi}
		Suppose that $\{\Phi_t\}_{t\geq 0}$ is a primitive KMS-symmetric quantum Markov semigroup whose generator is given by \eqref{eq:poissongenerator}.
		We have that 
		\begin{align*}
			\alpha_{MLSI} \geq \frac{ s_-^2}{4\gamma s_+}\lambda_{\KMS}^2\geq \frac{h_\rho(\cL)^4 s_-^6}{16 \gamma s_+^5\|\cL\|_{\KMS,\rho}^2}.
		\end{align*}
	\end{theorem}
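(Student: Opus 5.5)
The plan is to compare the Fisher information with a quadratic ($\chi^2$-type) quantity, then use the KMS spectral gap and Lemma~\ref{lem:entropybd}. The Poissonized form $\cL=\gamma(\Id-\Psi)$ should make this direct. Fix a faithful density $D$ and put $Y=D_\rho^{-1/2}DD_\rho^{-1/2}-I$, so that $\rho(Y)=0$ and $Y=Y^*$.

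\textbf{Step 1: Fisher information as a sum of entropies.} Since $\cL^*=\gamma(\Id-\Psi^*)$, we have $\cI_\cL(D)=\gamma\Tr\bigl((D-\Psi^*(D))(\log D-\log D_\rho)\bigr)$. Expanding the traces gives
\[
\cI_\cL(D)=\gamma\Bigl[H(D\|D_\rho)-H(\Psi^*(D)\|D_\rho)+H(\Psi^*(D)\|D)\Bigr].
\]
The first difference is nonnegative by data processing, because $\Psi^*$ is CPTP and fixes $D_\rho$. Pinsker's inequality together with $\|\cdot\|_1\ge\|\cdot\|_{\mathrm{HS}}$ then gives
\[
\cI_\cL(D)\ \ge\ \frac{\gamma}{2}\|D-\Psi^*(D)\|_1^2\ \ge\ \frac{\gamma}{2}\|D-\Psi^*(D)\|_{\mathrm{HS}}^2 .
\]

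\textbf{Step 2: pass to the KMS picture.} By KMS symmetry, $D-\Psi^*(D)=\gamma^{-1}\cL^*(D_\rho^{1/2}YD_\rho^{1/2})=\gamma^{-1}D_\rho^{1/2}\cL(Y)D_\rho^{1/2}$. In an eigenbasis of $D_\rho$ we have $\|D_\rho^{1/2}ZD_\rho^{1/2}\|_{\mathrm{HS}}^2=\sum s_js_k|z_{jk}|^2\ge s_-\|Z\|_{\KMS,\rho}^2$.

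$\cL$ is KMS-self-adjoint and nonnegative, and it leaves $\{\rho(X)=0\}$ invariant. Hence on that subspace $\|\cL(Y)\|_{\KMS,\rho}^2\ge\lambda_{\KMS}^2\|Y\|_{\KMS,\rho}^2$. Here $\lambda_{\KMS}$ is defined over Hermitian $X$, but $\cL$ is $*$-preserving, so this is the bottom of the spectrum on all of the mean-zero space.

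Combining with Lemma~\ref{lem:entropybd} yields
\[
\cI_\cL(D)\ \ge\ \frac{s_-\lambda_{\KMS}^2}{2\gamma}\|Y\|^2_{\KMS,\rho}\ \ge\ \frac{s_-\lambda_{\KMS}^2}{2\gamma}H(D\|D_\rho).
\]
This gives $\alpha_{MLSI}\ge s_-\lambda_{\KMS}^2/(4\gamma)$. That is at least the claimed $\frac{s_-^2}{4\gamma s_+}\lambda_{\KMS}^2$, since $s_-\le s_+$.

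\textbf{Step 3: the Cheeger bound.} Substitute the lower bound of Theorem~\ref{thm:cheeger}, $\lambda_{\KMS}^2\ge h_\rho(\cL)^4s_-^4/(4s_+^4\|\cL\|_{\KMS,\rho}^2)$. This produces exactly $h_\rho(\cL)^4s_-^6/(16\gamma s_+^5\|\cL\|_{\KMS,\rho}^2)$.

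\textbf{Main obstacle.} The delicate point is Step 1. One must not try to contract $H(\Psi^*(D)\|D_\rho)$ strictly using the spectrum of $\Psi$: $\Psi$ may have eigenvalues near $-1$, so that route fails. Instead the argument discards this difference, using only that it is nonnegative, and extracts all the coercivity from the term $H(\Psi^*(D)\|D)$ via Pinsker. The remaining points are to check carefully the KMS-symmetry identity $\Psi^*(D_\rho^{1/2}XD_\rho^{1/2})=D_\rho^{1/2}\Psi(X)D_\rho^{1/2}$ and the HS-to-KMS comparison constant.
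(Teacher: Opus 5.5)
Your proposal is correct and follows the paper's proof essentially step for step: the entropy decomposition of $\cI_\cL(D)$, data processing plus Pinsker, the KMS-symmetry identity, the spectral gap, Lemma~\ref{lem:entropybd}, and then Theorem~\ref{thm:cheeger}. The only difference is that you compare norms directly via $\|D_\rho^{1/2}ZD_\rho^{1/2}\|_{\mathrm{HS}}^2\ge s_-\|Z\|_{\KMS,\rho}^2$, whereas the paper goes through $s_-^2\|Z\|_{\mathrm{HS}}^2\ge (s_-^2/s_+)\|Z\|_{\KMS,\rho}^2$. Your route gives the slightly stronger intermediate constant $s_-\lambda_{\KMS}^2/(4\gamma)$, and it is the same comparison the paper itself uses in the proof of Theorem~\ref{thm:poissoncmlsi}.
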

	\begin{proof}
		We have that 
		\begin{align*}
			\frac{1}{\gamma} \cI_{\cL}(D)
			=& \Tr((D-\Psi^*(D))(\log D-\log D_\rho)) \\
			=& H(D\| D_\rho)-H(\Psi^*(D)\|D_\rho)+H(\Psi^*(D)\| D) \\
			\geq & H(\Psi^*(D)\| D) \quad \text{ Data Processing Inequality}\\
			\geq & \frac{1}{2} \|\Psi^*(D)- D\|_1^2 \quad \text{Pinsker Inequality} \\
			\geq & \frac{1}{2} \|\Psi^*(D)- D\|_{HS}^2=\frac{1}{2\gamma^2}\|\cL^*(D)\|_{HS}^2. 
		\end{align*}
		Note that 
		\begin{align*}
			\|\cL^*(D)\|_{HS}^2
			=& \|D_\rho^{1/2}\cL(D_\rho^{-1/2} D D_\rho^{-1/2}) D_\rho^{1/2}\|_{HS}^2 \\
			\geq & s_-^2  \| \cL(D_\rho^{-1/2} D D_\rho^{-1/2}) \|_{HS}^2  \\
			\geq & \frac{s_-^2}{s_+}\| \cL(D_\rho^{-1/2} D D_\rho^{-1/2}) \|_{\KMS, \rho}^2\\
			\geq & \frac{s_-^2}{s_+} \lambda_{\KMS} \langle D_\rho^{-1/2} D D_\rho^{-1/2}, \cL(D_\rho^{-1/2} D D_\rho^{-1/2})\rangle_{\KMS, \rho}\\
			= & \frac{s_-^2}{s_+} \lambda_{\KMS} \langle D_\rho^{-1/2} D D_\rho^{-1/2}-I, \cL(D_\rho^{-1/2} D D_\rho^{-1/2}-I)\rangle_{\KMS, \rho}\\
			\geq & \frac{s_-^2}{s_+} \lambda_{\KMS}^2 \|D_\rho^{-1/2} D D_\rho^{-1/2}-I\|_{\KMS, \rho}^2 \\
			\geq & \frac{s_-^2}{s_+}\lambda_{\KMS}^2 H(D\|D_\rho) \quad \text{Lemma \ref{lem:entropybd}},
		\end{align*}
		where the fifth line follows from the fact that $\cL(I)=0$ and $\cL$ is symmetric with respect to the inner product $\langle \cdot, \cdot\rangle_{\KMS, \rho}$.
		Combining the two estimates, we have that 
		\begin{align*}
			\cI_{\cL}(D) \geq \frac{s_-^2}{2\gamma s_+}\lambda_{\KMS}^2 H(D\|D_\rho).
		\end{align*}
		Applying Theorem~\ref{thm:cheeger} completes the proof.
	\end{proof}

\begin{theorem}\label{thm:poissoncmlsi}
Suppose that $\{\Phi_t\}_{t\geq 0}$ is a primitive KMS-symmetric quantum Markov semigroup whose generator is given by \eqref{eq:poissongenerator}.
We have that 
\begin{align*}
    \alpha_{CMLSI} \geq \frac{ s_-^2}{4n \gamma s_+}\lambda_{\KMS}^2\geq \frac{h_\rho(\cL)^4 s_-^6}{16 n \gamma s_+^5\|\cL\|_{\KMS,\rho}^2}.
\end{align*}
\end{theorem}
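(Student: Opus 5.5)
We would mimic the proof of Theorem~\ref{thm:poissonmlsi} for the amplified generator $\cL\otimes\Id_m=\gamma(\Id-\Psi\otimes\Id_m)$ acting on $M_n\otimes M_m$. Fix $m$ and a faithful density $D$ on $M_n\otimes M_m$. Write $D_R=(\Tr_{M_n}\otimes\Id_m)(D)$ and $D_\eta=D_\rho\otimes D_R$. The map $\Psi^*\otimes\Id_m$ fixes $D_\eta$.

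The same three-term identity as before gives
\[
\tfrac1\gamma\cI_{\cL\otimes\Id_m}(D)=H(D\|D_\eta)-H((\Psi^*\otimes\Id_m)(D)\|D_\eta)+H((\Psi^*\otimes\Id_m)(D)\|D).
\]
Data processing drops the first two terms. Pinsker's inequality then gives
\[
\cI_{\cL\otimes\Id_m}(D)\ge \tfrac{1}{2\gamma}\|(\cL^*\otimes\Id_m)(D)\|_{1}^2 .
\]
We keep the trace norm here rather than passing to Hilbert--Schmidt immediately. The whole task is to show
\[
\tfrac{1}{2\gamma}\|(\cL^*\otimes\Id_m)(D)\|_1^2\ \ge\ \tfrac{s_-^2}{2n\gamma s_+}\lambda_{\KMS}^2\,H(D\|D_\eta),
\]
with a constant independent of $m$. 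The factor $1/n$ is the price of removing the $m$-dependence.

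Next we would reduce the auxiliary side. Replace $D$ by the rescaled $\widetilde D=(I\otimes D_R^{-1/2})D(I\otimes D_R^{-1/2})$. Then $H(D\|D_\eta)$ is controlled by a KMS-type quantity for $X=(D_\rho^{-1/2}\otimes D_R^{-1/2})D(D_\rho^{-1/2}\otimes D_R^{-1/2})-I$. For this we would use the $\widetilde H_2$ bound of Lemma~\ref{lem:entropybd}, applied with reference $D_\eta$. That lemma's proof is stated generally for faithful densities, and the reference state enters only through its density. This yields
\[
H(D\|D_\eta)\le \|X\|_{\KMS,\eta}^2 .
\]
Because $\eta=\rho\otimes\sigma_R$ and $\cL\otimes\Id_m$ is KMS-symmetric with respect to $\eta$, two facts are immediate. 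The KMS spectral gap of $\cL\otimes\Id_m$ on $\{Y:(\rho\otimes\Id)(Y)=0\}$ is again $\lambda_{\KMS}$, since it is the spectral gap of a tensor product with the identity in a product Hilbert space. Also $X$ lies in this subspace, because $(\Tr_{M_n}\otimes\Id)(D)=D_R$. Repeating the chain of the previous proof in $M_n\otimes M_m$ then gives
\[
\|(\cL\otimes\Id_m)(X)\|_{\KMS,\eta}^2\ge\lambda_{\KMS}^2\|X\|_{\KMS,\eta}^2 .
\]

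The main obstacle is converting $\|(\cL^*\otimes\Id_m)(D)\|_1$ into $\|(\cL\otimes\Id_m)(X)\|_{\KMS,\eta}$ without picking up $m$- or $s_-(D_R)$-dependence. The constants $s_\pm$ of $D_\eta$ are not controlled, because $D_R$ is arbitrary.

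We would first try the following route. Write
\[
(\cL^*\otimes\Id)(D)=(D_\rho^{1/2}\otimes D_R^{1/2})\,(\cL\otimes\Id)(X)\,(D_\rho^{1/2}\otimes D_R^{1/2}).
\]
Then apply a Hölder/Cauchy--Schwarz inequality in the $M_n$ factor only, treating $M_m$ through the operator-valued (Pisier $L_1(M_m;L_2)$-type) norm. Three ingredients enter:
\begin{itemize}
\item the conditional Cauchy--Schwarz bound $\|Y\|_1\ge \|Y\|_2$-type estimates on $M_n$, which lose only $\sqrt n$ in dimension;
\item $s_-$ and $s_+$ from $D_\rho$, as in the previous proof;
\item the $D_R$ factors, which we expect to cancel exactly against the $D_R^{-1/2}$ normalisation in the KMS norm of $\eta$.
\end{itemize}
Concretely, we expect an inequality of the form
\[
\|D_\rho^{1/2}\otimes D_R^{1/2}\,Z\,D_\rho^{1/2}\otimes D_R^{1/2}\|_1^2\ \ge\ \tfrac{s_-^2}{n s_+}\|Z\|_{\KMS,\eta}^2\cdot(\text{normalisation})\quad\text{for }Z=(\cL\otimes\Id)(X).
\]
The normalisation is to be absorbed using $\Tr D=1$. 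Here the dimension factor $n$ arises from comparing trace and Hilbert--Schmidt norms on the $n$-dimensional leg.

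If this direct route fails, the fallback is to invoke the known fact that $\alpha_{\mathrm{CMLSI}}\ge \alpha_{\mathrm{MLSI}}$ of a suitable comparison, up to a factor given by the index or dimension $n$ of $M_n$. That comparison rests on the Pimsner--Popa bound
\[
D\le n\,(\bE^*\otimes\Id)(D)\cdot(\cdots)
\]
together with the monotonicity \eqref{eq: K monotone} of $\mathbf K^{-1}$, applied inside the integral formula \eqref{eq:relative-entropy-integral}. We would then combine this with Theorem~\ref{thm:poissonmlsi}, and the second inequality follows from Theorem~\ref{thm:cheeger} exactly as before.
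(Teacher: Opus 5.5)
\textbf{There is a genuine gap.} Your main route breaks at Pinsker's inequality, and no H\"older step can repair it. The inequality you call ``the whole task'', $\|(\cL^*\otimes\Id_m)(D)\|_1^2\ge c\,H(D\|D_\rho\otimes D_R)$, fails for every $c>0$, already for $m=2$.

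Take a faithful density $D_1\neq D_\rho$ on $M_n$ and set $D=\varepsilon D_1\otimes E_{11}+(1-\varepsilon)D_\rho\otimes E_{22}$. Then $D_R=\mathrm{diag}(\varepsilon,1-\varepsilon)$, and
\[
(\cL^*\otimes\Id_2)(D)=\varepsilon\,\cL^*(D_1)\otimes E_{11},
\qquad
H(D\|D_\rho\otimes D_R)=\varepsilon H(D_1\|D_\rho).
\]
So your left side is of order $\varepsilon^2$, while the right side is of order $\varepsilon$. The quantity you started from, $H((\Psi^*\otimes\Id_m)(D)\|D)=\varepsilon H(\Psi^*(D_1)\|D_1)$, is linear in the weights of $D_R$. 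Pinsker replaces it by something quadratic in those weights. Nothing done afterwards can make the $D_R$ factors ``cancel'': in this example $\|(\cL\otimes\Id_2)(X)\|_{\KMS,\eta}^2$ is also of order $\varepsilon$.

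\textbf{The fallback does not close the gap.} As a general transfer principle, $\alpha_{\mathrm{CMLSI}}\gtrsim\alpha_{\mathrm{MLSI}}/n$ is false here. Every primitive KMS-symmetric semigroup satisfies MLSI (remark after Theorem~\ref{thm:lsi}), yet some fail CMLSI \cite{LWW26}. For Poisson generators, that comparison is exactly what the theorem asserts, so you cannot invoke it.

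The ingredients you name are the right ones, but they must replace Pinsker and act directly on $H((\Psi^*\otimes\Id_m)(D)\|D)$. The steps are as follows.
\begin{enumerate}
\item Both $D$ and $(\Psi^*\otimes\Id_m)(D)$ have marginal $D_R$, so both are $\le nI_n\otimes D_R$.
\item By \eqref{eq:relative-entropy-integral} and \eqref{eq: K monotone}, this gives $H((\Psi^*\otimes\Id_m)(D)\|D)\ge\frac{1}{2n\gamma^2}\langle Y,\mathbf{K}^{-1}_{I_n\otimes D_R}Y\rangle$, where $Y=(\cL^*\otimes\Id_m)(D-D_\rho\otimes D_R)$.
\item The reference $I_n\otimes D_R$ is trivial on the $M_n$ leg. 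In an eigenbasis of $D_R$, the form $\mathbf{K}^{-1}_{I_n\otimes D_R}$ therefore acts on the traceless blocks $D_{jk}$ of $D-D_\rho\otimes D_R$ by scalar weights.
\item Hence the single-system bound $\|\cL^*(Z)\|_{\mathrm{HS}}^2\ge\frac{s_-}{s_+}\lambda_{\KMS}^2\|Z\|_{\mathrm{HS}}^2$ for $\Tr Z=0$ applies block by block.
\item Finally, $(1-t)D_\rho\otimes D_R+tD\ge(1-t)s_-\,I_n\otimes D_R$ turns $\langle X,\mathbf{K}^{-1}_{I_n\otimes D_R}X\rangle$, with $X=D-D_\rho\otimes D_R$, into at least $s_-H(D\|D_\rho\otimes D_R)$.
\end{enumerate}
Chaining these gives $\cI_{\cL\otimes\Id_m}(D)\ge\frac{s_-^2}{2n\gamma s_+}\lambda_{\KMS}^2H(D\|D_\rho\otimes D_R)$. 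The factor $n$ comes from the Pimsner--Popa bound, not from comparing trace and Hilbert--Schmidt norms. The argument never uses a KMS norm with respect to $\eta$, whose constants you rightly noted are uncontrolled.
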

\begin{proof}
For arbitrary $m\geq 1$ , let $0<D \in M_n\otimes M_m$
be a density matrix and set
$D_R:=(\operatorname{Tr}_{M_n}\otimes\Id_m)(D)$.
Arguing as in the proof of Theorem~\ref{thm:poissonmlsi}, we have
\begin{equation}\label{eq: Fisher information expansion}
\frac{1}{\gamma}\mathcal{I}_{\cL \otimes \Id_m}(D) \geq  H\!\left(
(\Psi^*\otimes\mathrm{Id}_m)(D)
\,\middle\|\,D
\right).
\end{equation}

Note that
$D\leq nI_n\otimes
    (\Tr_{M_n}\otimes\mathrm{Id}_m)(D)$.
Since $\Psi^*$ is trace-preserving, $D$ and
$(\Psi^*\otimes\mathrm{Id}_m)(D)$ have the same marginal $D_R$,
so both satisfy the preceding bound, i.e.
$D\leq nI_n\otimes D_R,\;
 (\Psi^*\otimes\mathrm{Id}_m)(D)\leq nI_n\otimes D_R$.
Then the integral representation of relative entropy \eqref{eq:relative-entropy-integral} and the relation \eqref{eq: K monotone} yield
\begin{equation}\label{eq: integral rep of realtive entropy}
\begin{aligned}
 &H\!\left((\Psi^*\otimes\mathrm{Id}_m)(D)\,\middle\|\,D\right)\\
 =&\int_0^1(1-t)\,
 \left<(\Psi^*\otimes\mathrm{Id}_m)(D)-D, \mathbf{K}_{(1-t)D+t(\Psi^*\otimes\mathrm{Id}_m)(D)}^{-1} \left((\Psi^*\otimes\mathrm{Id}_m)(D)-D \right)\right>\,dt\\
 \geq&\frac1{2n}\,
 \left<(\Psi^*\otimes\mathrm{Id}_m)(D)-D, \mathbf{K}_{I_n \otimes D_R}^{-1}\left((\Psi^*\otimes\mathrm{Id}_m)(D)-D \right) \right>\\
 =&\frac{1}{2n \gamma^2} \left<(\cL^* \otimes \Id_m)(D-D_\rho \otimes D_R),\mathbf{K}_{I_n \otimes D_R}^{-1}(\cL^* \otimes \Id_m)(D-D_\rho \otimes D_R) \right>.
\end{aligned}
\end{equation}

Note that for any $X \in M_n(\bC)$ with $\Tr(X)=0$, 
\begin{equation}\label{eq:L dual HS}
\begin{aligned}
    \|\cL^*(X) \|_{\mathrm{HS}}^2&=\|D_\rho^{1/2} \cL(D_\rho^{-1/2} X D_\rho^{-1/2}) D_\rho^{1/2}\|_{\mathrm{HS}}^2 \\
    &\geq s_- \|\cL(D_\rho^{-1/2} X D_\rho^{-1/2})\|_{\KMS,\rho}^2
    \\
    &\geq s_- \lambda_\KMS^2 \| D_\rho^{-1/2} X D_\rho^{-1/2}\|^2_{\KMS, \rho} \\
    &\ge \frac{s_-}{s_+} \lambda_\KMS^2\|X\|_{\mathrm{HS}}^2.
\end{aligned}
\end{equation}
Now choose eigenvectors of $D_R$ and write
$ D_R=\operatorname{diag}(r_1,\ldots,r_m),
 \;
\displaystyle X=\sum_{j,k}X_{jk}\otimes E_{jk}$,
then
\begin{equation}\label{eq: Q for DR}
\left<X,\mathbf{K}_{I_n \otimes D_R}^{-1}X \right>
 =\sum_{j,k}\int_0^\infty
 \frac{dt}{(r_j+t)(r_k+t)}\|X_{jk}\|_{\mathrm{HS}}^2.
\end{equation}
Write $D-D_\rho \otimes D_R=\sum\limits_{j,k} D_{jk} \otimes E_{jk}$; then $\Tr(D_{jk})=0$. 
Applying \eqref{eq:L dual HS}, \eqref{eq: Q for DR} and relation \eqref{eq: K monotone}, we obtain
\begin{equation*}
\begin{aligned}
&\left<(\mathcal L^*\otimes\mathrm{Id}_m)
 (D-D_\rho\otimes D_R), \mathbf{K}_{I_n\otimes D_R}^{-1}(\mathcal L^*\otimes\mathrm{Id}_m)
 (D-D_\rho\otimes D_R) \right>\\
 =&\sum_{j,k}\int_0^\infty
 \frac{dt}{(r_j+t)(r_k+t)} \| \cL^*(D_{jk}) \|^2_{\mathrm{HS}}\\
 \geq& 
 \frac{s_-}{s_+} \lambda_\KMS^2\sum_{j,k}\int_0^\infty
 \frac{dt}{(r_j+t)(r_k+t)}  \|D_{jk}\|^2_{\mathrm{HS}}\\
 =
 &\frac{s_-}{s_+}\lambda_\KMS^2
 \left<
 D-D_\rho\otimes D_R, \mathbf{K}_{I_n\otimes D_R}^{-1}
 (D-D_\rho\otimes D_R) \right>\\
 \geq& \frac{s_-^2}{s_+}\lambda_\KMS^2\int_0^1(1-t)\, \left<(D-D_\rho\otimes D_R),\mathbf{K}^{-1}_{(1-t)D_\rho\otimes D_R+tD}
 (D-D_\rho\otimes D_R) \right>\,dt \\
 =&\frac{s_-^2}{s_+}\lambda_\KMS^2 H(D\|D_\rho \otimes D_R).
 \end{aligned}
\end{equation*}
Thus
\begin{equation}\label{eq: QD bigger than relative entropy}
\begin{aligned}
    &\left<(\mathcal L^*\otimes\mathrm{Id}_m)
 (D-D_\rho\otimes D_R), \mathbf{K}_{I_n\otimes D_R}^{-1}(\mathcal L^*\otimes\mathrm{Id}_m)
 (D-D_\rho\otimes D_R) \right> \\
 \geq &\frac{s_-^2}{s_+}\lambda_\KMS^2 H(D\|D_\rho \otimes D_R).
\end{aligned}
\end{equation}

Finally, combining inequalities \eqref{eq: Fisher information expansion}, \eqref{eq: integral rep of realtive entropy}, \eqref{eq: QD bigger than relative entropy}, we obtain 
\begin{equation*}
    \mathcal{I}_{\cL\otimes \Id_m}(D) \geq 2 \frac{ s_-^2}{4n \gamma s_+}\lambda_{\KMS}^2\;H (D\| D_\rho \otimes D_R).
\end{equation*}
The theorem then follows from the quantum Cheeger inequality (Theorem \ref{thm:cheeger}).
\end{proof}

\begin{remark}
  In \cite{LWW26}, Liu and the present authors show that the CMLSI is not true for every primitive KMS-symmetric quantum Markov semigroup.  
  Theorem \ref{thm:poissoncmlsi} shows that the CMLSI is true for primitive KMS-symmetric quantum Markov semigroups with Poissonized generators.
\end{remark}
	\begin{proposition}
		Suppose that $\cG$ is connected, $n\geq3$, and all weighted degrees are equal to a common value $d>0$.
Let $\{\Phi_t\}_{t\geq0}$ be the graph-based KMS-symmetric quantum Markov semigroup of Section~\ref{sec:graph}, and set $\gamma=d$.
		Then 
		\begin{align*}
			\alpha_{MLSI} \geq & \frac{ s_-^2}{4\gamma s_+}\lambda_{\KMS}^2 
			\geq \min\left\{ \frac{s_- ^2h_{\rho, \cG}^4}{16\gamma s_+d_{\max}^2}, \frac{s_-^4}{16\gamma s_+^3} h_{\rho, \cG}^2\right\}, \\
			\alpha_{CMLSI} \geq &  \frac{ s_-^2}{4n\gamma s_+}\lambda_{\KMS}^2 
			\geq \min\left\{ \frac{s_- ^2h_{\rho, \cG}^4}{16n\gamma s_+d_{\max}^2}, \frac{s_-^4}{16n\gamma s_+^3} h_{\rho, \cG}^2\right\}. 
		\end{align*}
	\end{proposition}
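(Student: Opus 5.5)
The plan is to show that, when all weighted degrees equal $d$, the graph generator \eqref{eq:mnlind1} is exactly of the Poissonized form \eqref{eq:poissongenerator} with $\gamma=d$. Then Theorems \ref{thm:poissonmlsi} and \ref{thm:poissoncmlsi} give the first inequality in each line, and Theorem \ref{thm:graphcheeger} converts $\lambda_{\KMS}$ into a bound in terms of $h_{\rho,\cG}$.

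\textbf{Step 1: the Poissonized form.} Compute $\sum_{e_{jk}}\gamma_{jk}V_{jk}^*V_{jk}$. Since $V_{jk}^*V_{jk}=s_kE_{jj}+s_jE_{kk}$, this sum equals $\sum_j d_jE_{jj}=dI$. Hence
\[
\cL(X)=dX-\sum_{e_{jk}}\gamma_{jk}V_{jk}^*XV_{jk}=d(\Id-\Psi)(X),
\]
where $\Psi(X)=d^{-1}\sum\gamma_{jk}V_{jk}^*XV_{jk}$. The map $\Psi$ is completely positive, and it is unital because $\Psi(I)=d^{-1}\cdot dI=I$.

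\textbf{Step 2: the hypotheses of Lemma \ref{lem:kmspoisson}.} KMS-symmetry of $\Psi$ follows from that of $\cL$: write $\Psi=\Id-d^{-1}\cL$, and note that $\Id$ is trivially KMS-symmetric. Also $\Psi^*(D_\rho)=D_\rho$, since $\cL^*(D_\rho)=0$. The fixed points of $\Psi$ are exactly $\ker\cL$. Primitivity of $\cL$ holds for connected $\cG$ with $n\ge3$, by the generation argument in Lemma \ref{lem:graphhambase} with $T=I$. So $\ker\cL=\bC I$, and Theorems \ref{thm:poissonmlsi} and \ref{thm:poissoncmlsi} apply with $\gamma=d$. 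They give
\[
\alpha_{MLSI}\ge \frac{s_-^2}{4\gamma s_+}\lambda_{\KMS}^2,\qquad \alpha_{CMLSI}\ge \frac{s_-^2}{4n\gamma s_+}\lambda_{\KMS}^2 .
\]

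\textbf{Step 3: bounding $\lambda_{\KMS}$.} For $n\ge3$, Theorem \ref{thm:graphcheeger} gives
\[
\lambda_{\KMS}\ge\min\left\{\frac{h_{\rho,\cG}^2}{2d_{\max}},\ \frac{s_-}{2s_+}h_{\rho,\cG}\right\}.
\]
Squaring a minimum of nonnegative numbers gives the minimum of the squares, so
\[
\lambda_{\KMS}^2\ge\min\left\{\frac{h_{\rho,\cG}^4}{4d_{\max}^2},\ \frac{s_-^2}{4s_+^2}h_{\rho,\cG}^2\right\}.
\]
Multiplying by $\frac{s_-^2}{4\gamma s_+}$, respectively by $\frac{s_-^2}{4n\gamma s_+}$, yields exactly the stated minima: $\frac{s_-^2h^4}{16\gamma s_+d_{\max}^2}$ and $\frac{s_-^4h^2}{16\gamma s_+^3}$, with the extra factor $n$ in the CMLSI case.

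\textbf{Main obstacle.} The step needing the most care is Step 1, namely that the anticommutator term collapses to $dI$. This uses the equal-degree hypothesis in an essential way, and it is what makes $\Psi$ unital. The remaining points are bookkeeping: the KMS-symmetry of $\Psi$, and checking that the theorems of Section 6 are applied with $\gamma=d$ and with the same $\lambda_{\KMS}$.
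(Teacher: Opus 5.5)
Your proposal is correct and follows the paper's proof. The equal-degree hypothesis gives $\sum_{e_{jk}}\gamma_{jk}V_{jk}^*V_{jk}=dI$, so $\cL=d(\Id-\Psi)$ with $\Psi$ unital, and the bounds then follow from Theorems \ref{thm:poissonmlsi} and \ref{thm:poissoncmlsi} combined with Theorem \ref{thm:graphcheeger}. Your explicit check of the hypotheses of Lemma \ref{lem:kmspoisson} (KMS-symmetry of $\Psi$, invariance of $D_\rho$, and primitivity via the generation argument with $T=I$) fills in what the paper asserts in a single line.
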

	\begin{proof}
		The assumptions imply that the semigroup is primitive. Moreover,
\[
\sum_{e_{jk}\in E(\cG)}\gamma_{jk}V_{jk}^*V_{jk}=dI.
\]
Hence the completely positive map
\[
\Psi(X)=\frac1d\sum_{e_{jk}\in E(\cG)}\gamma_{jk}V_{jk}^*XV_{jk}
\]
is unital, and $\cL=d(\Id-\Psi)=\gamma(\Id-\Psi)$.
The estimates follow by substituting Theorem~\ref{thm:graphcheeger} into Theorems~\ref{thm:poissonmlsi} and~\ref{thm:poissoncmlsi}.   
	\end{proof}

	\section{Connection to LSI and Hypercontractivity}
	
	In this section, we first establish the logarithmic Sobolev inequality and hypercontractivity for primitive KMS-symmetric quantum Markov semigroups and then apply the quantum Cheeger inequality.
	
	First of all, we recall the $p$-norm for noncommutative $L^p$ spaces for $1\leq p\leq \infty$.
	Suppose that $\rho$ is a faithful state on $M_n(\bC)$ and $X\in M_n(\bC)$.
	The $p$-norm of $X$ with respect to $\rho$ is defined to be
    \begin{align*}
    \|X\|_{p,\rho}
    :=
    \begin{cases}
        \displaystyle
        \left[
            \Tr\left(
                \left|
                    D_\rho^{\frac{1}{2p}} X D_\rho^{\frac{1}{2p}}
                \right|^p
            \right)
        \right]^{1/p},
        & 1 \le p < \infty, \\[8pt]
        \|X\|_\infty,
        & p = \infty.
    \end{cases}
\end{align*}
	Note that $\|X\|_{2, \rho}=\|X\|_{\KMS, \rho}$.
	Applying the noncommutative martingale convexity inequality of Ricard and Xu \cite[Theorem~1]{RicXu16} to the $\rho$-preserving conditional expectation $X\mapsto\rho(X)I$, we obtain, for any $X\in M_n(\bC)$ and $2\leq p<\infty$, 
	\begin{align}\label{eq:mart}
		\|X\|_{p, \rho}^2 \leq |\rho(X)|^2+(p-1) \|X-\rho(X)I\|_{p, \rho}^2
	\end{align}
	For $2\leq p<\infty$ and any $X\in M_n(\bC)$, we have that
	\begin{align*}
		\|X\|_{p, \rho} = & \left\|D_\rho^{\frac{1}{2p}} X D_\rho^{\frac{1}{2p}}\right\|_p \\
		= & \left\|D_\rho^{\frac{1}{2p}-\frac{1}{4}} D_\rho^{\frac{1}{4}}X D_\rho^{\frac{1}{4}}D_\rho^{\frac{1}{2p}-\frac{1}{4}}\right\|_p \\
		\leq & \left\|D_\rho^{\frac{1}{2p}-\frac{1}{4}} \right\|_\infty \left\|D_\rho^{\frac{1}{4}}X D_\rho^{\frac{1}{4}}\right\|_{p}\left\|D_\rho^{\frac{1}{2p}-\frac{1}{4}}\right\|_\infty \\
		\leq & \left\|D_\rho^{\frac{1}{2p}-\frac{1}{4}} \right\|_\infty \left\|D_\rho^{\frac{1}{4}}X D_\rho^{\frac{1}{4}}\right\|_{2}\left\|D_\rho^{\frac{1}{2p}-\frac{1}{4}}\right\|_\infty\\
		=& s_-^{1/p-1/2}\|X\|_{2, \rho}, 
	\end{align*}
	i.e.
	\begin{align}\label{eq:normbd}
		\|X\|_{p, \rho}\leq s_-^{1/p-1/2} \|X\|_{2, \rho}.
	\end{align}

	\begin{proposition}\label{prop:hyper1}
		Suppose that $\{\Phi_t\}_{ t\geq 0}$ is a primitive KMS-symmetric quantum Markov semigroup.
		For $2\leq q<\infty$, any $X\in M_n(\bC)$, and $\displaystyle t \geq \frac{2+\log s_-^{-1}}{4\lambda_{\KMS}}\log (q-1)$, we have that 
		\begin{align}\label{eq:hyperq2}
			\|\Phi_t(X)\|_{q, \rho} \leq \|X\|_{2, \rho}
		\end{align}
	\end{proposition}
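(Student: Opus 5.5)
Write $Y=\Phi_t(X)$ and split it into mean and fluctuation. Since $\rho\circ\Phi_t=\rho$, we have $\rho(Y)=\rho(X)$ and $Y-\rho(Y)I=\Phi_t(X-\rho(X)I)$. The martingale convexity inequality \eqref{eq:mart} then gives
\[
\|\Phi_t(X)\|_{q,\rho}^2\le |\rho(X)|^2+(q-1)\,\|\Phi_t(X_0)\|_{q,\rho}^2,\qquad X_0:=X-\rho(X)I .
\]
The target is $\|X\|_{2,\rho}^2$. Because the KMS inner product is the $L^2(\rho)$ inner product and $\langle I,X_0\rangle_{\KMS,\rho}=\rho(X_0)=0$, this splits orthogonally as $\|X\|_{2,\rho}^2=|\rho(X)|^2+\|X_0\|_{2,\rho}^2$. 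It therefore suffices to prove
\[
(q-1)\,\|\Phi_t(X_0)\|_{q,\rho}^2\le \|X_0\|_{2,\rho}^2 .
\]

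To bound the fluctuation term, first apply \eqref{eq:normbd} with $p=q$:
\[
\|\Phi_t(X_0)\|_{q,\rho}\le s_-^{1/q-1/2}\,\|\Phi_t(X_0)\|_{2,\rho}.
\]
Next use the spectral gap. $\cL$ is self-adjoint for $\langle\cdot,\cdot\rangle_{\KMS,\rho}$ and leaves $\cH_0=\{\rho(X)=0\}$ invariant. The Rayleigh quotient defining $\lambda_{\KMS}$ is over Hermitian elements, so I first extend it to all of $\cH_0$: write $X=A+iB$ with $A,B$ Hermitian. Since $\cL$ is $*$-preserving, the real quadratic form splits into the $A$ and $B$ parts, and the bound carries over. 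This gives $\|\Phi_t(X_0)\|_{2,\rho}\le e^{-\lambda_{\KMS}t}\|X_0\|_{2,\rho}$. Combining,
\[
(q-1)\,\|\Phi_t(X_0)\|_{q,\rho}^2\le (q-1)\,s_-^{2/q-1}\,e^{-2\lambda_{\KMS}t}\,\|X_0\|_{2,\rho}^2 ,
\]
so it remains to choose $t$ with $(q-1)\,s_-^{2/q-1}e^{-2\lambda_{\KMS}t}\le 1$.

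The main step is checking that the stated threshold on $t$ suffices, i.e.
\[
2\lambda_{\KMS}t\;\ge\;\log(q-1)+\Bigl(1-\tfrac{2}{q}\Bigr)\log s_-^{-1}.
\]
The hypothesis gives $2\lambda_{\KMS}t\ge \tfrac12(2+\log s_-^{-1})\log(q-1)=\log(q-1)+\tfrac12\log(q-1)\log s_-^{-1}$. So it is enough to show
\[
\tfrac12\log(q-1)\ge 1-\tfrac{2}{q}=\tfrac{q-2}{q}\qquad\text{for } q\ge2 .
\]
Set $u=q-1\ge1$; the inequality becomes $\log u\ge 2(u-1)/(u+1)$. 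Both sides vanish at $u=1$. The derivative of the difference is $\tfrac1u-\tfrac{4}{(u+1)^2}=\tfrac{(u-1)^2}{u(u+1)^2}\ge0$, so the inequality holds for all $u\ge1$.

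The step I expect to be delicate is this elementary scalar inequality: the constant $2+\log s_-^{-1}$ is tuned so that it works. A secondary point is making sure the Ricard--Xu inequality is applied to the correct object, namely $\Phi_t(X)$ together with its mean.
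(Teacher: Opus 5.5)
Your proposal is correct and follows the paper's proof step for step: split off the mean, apply the Ricard--Xu inequality \eqref{eq:mart}, pass from $L^q$ to $L^2$ via \eqref{eq:normbd}, use the spectral-gap decay $\|\Phi_t(X_0)\|_{2,\rho}\le e^{-\lambda_{\KMS}t}\|X_0\|_{2,\rho}$, and reduce the time threshold to the scalar inequality $\log(q-1)\ge 2(1-2/q)$. You also fill in two details the paper leaves implicit: the extension of the gap estimate from Hermitian to general $X_0$, and the derivative check of that scalar inequality.
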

	\begin{proof}
		Suppose that $X_0=X-\rho(X)I$.
		We have that $\Phi_t(X)=\rho(X)I+\Phi_t(X_0)$, $\rho(\Phi_t(X_0))=0$ and
		\begin{align*}
			\|X\|_{2, \rho}^2 =|\rho(X)|^2 +\|X_0\|_{2, \rho}^2.
		\end{align*}
		By the fact that the semigroup is primitive and KMS-symmetric, we have that 
		\begin{align}
			\|\Phi_t(X_0)\|_{2, \rho} \leq e^{-\lambda_{\KMS} t} \|X_0\|_{2, \rho}.
		\end{align}
		By Equations \eqref{eq:mart} and \eqref{eq:normbd}, we have that 
		\begin{align*}
			\|\Phi_t(X)\|_{q, \rho}^2
			\leq &  |\rho(X)|^2+(q-1)\|\Phi_t(X_0)\|_{q, \rho}^2 \\
			\leq &  |\rho(X)|^2+(q-1)s_-^{-1+2/q}\|\Phi_t(X_0)\|_{2, \rho}^2 \\
			\leq & |\rho(X)|^2+(q-1) s_-^{-1+2/q} e^{-2\lambda_{\KMS} t} \|X_0\|_{2, \rho}^2.
		\end{align*}
		By assuming that 
		\begin{align}\label{eq:timebd}
			(q-1)s_-^{-1+2/q} e^{-2\lambda_{\KMS} t}\leq 1,
		\end{align}
		we obtain that 
		\begin{align*}
			\|\Phi_t(X)\|_{q, \rho}^2\leq   |\rho(X)|^2+ \|X_0\|_{2, \rho}^2=\|X\|_{2, \rho}^2.
		\end{align*}
		Condition \eqref{eq:timebd} is satisfied whenever 
		\begin{align*}
			t\geq \frac{1}{2\lambda_{\KMS}}\left( \log (q-1) +\left( 1-\frac{2}{q} \right)\log s_-^{-1}\right),
		\end{align*}
		so inequality \eqref{eq:hyperq2} holds.
		By noting that $\displaystyle \log (q-1)\geq 2\left( 1-\frac{2}{q} \right)$, we see that when $\displaystyle t \geq \frac{2+\log s_-^{-1}}{4\lambda_{\KMS}}\log (q-1)$, the inequality \eqref{eq:hyperq2} is true.
		This completes the proof of the proposition.
	\end{proof}

	\begin{theorem}
		Suppose that $\{\Phi_t\}_{t\geq 0}$ is a primitive KMS-symmetric quantum Markov semigroup.
		For any $X\in M_n(\bC)$, we have that 
		\begin{align}\label{eq:hyper}
			\|\Phi_t(X)\|_{q, \rho} \leq \|X\|_{p, \rho},   \quad 1< p\leq q<\infty,
		\end{align}
		whenever $\displaystyle t \geq \frac{2+\log s_-^{-1}}{2\lambda_{\KMS}}\log \frac{q-1}{p-1}$.
	\end{theorem}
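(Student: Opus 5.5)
Let $c=\frac{2+\log s_-^{-1}}{4\lambda_{\KMS}}$. Proposition~\ref{prop:hyper1} then reads $\|\Phi_t\|_{2\to q}\le 1$ for $t\ge c\log(q-1)$, $q\ge 2$. The plan is to bootstrap this into the full $p\to q$ statement using three ingredients: duality, the semigroup property, and interpolation (or Gross' integration lemma). The target time in \eqref{eq:hyper} is $2c\log\frac{q-1}{p-1}$, which leaves a factor two of slack for this.

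\textbf{Step 1 (duality).} The $\rho$-weighted $L^p$ norms are dual under the pairing $\langle X,Y\rangle_{\KMS,\rho}=\Tr(X^*D_\rho^{1/2}YD_\rho^{1/2})$. Indeed, writing $X=D_\rho^{-1/(2p)}AD_\rho^{-1/(2p)}$ and $Y=D_\rho^{-1/(2p')}BD_\rho^{-1/(2p')}$, this pairing becomes $\Tr(A^*B)$. Since $\Phi_t$ is self-adjoint for this pairing by KMS symmetry, we get $\|\Phi_t\|_{p\to q}=\|\Phi_t\|_{q'\to p'}$. Hence Proposition~\ref{prop:hyper1} gives $\|\Phi_t\|_{q'\to 2}\le1$ for $t\ge c\log(q-1)=c\log\frac{1}{q'-1}$.

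\textbf{Step 2 (cases by position relative to $2$).}
\begin{itemize}
\item If $p\le 2\le q$, write $\Phi_t=\Phi_{t_2}\Phi_{t_1}$ with $t_1=c\log\frac1{p-1}$ for the step $p\to2$ and $t_2=c\log(q-1)$ for the step $2\to q$. The total time is $c\log\frac{q-1}{p-1}$, which lies within the claimed bound.
\item If $2\le p\le q$, I would interpolate. Complex (Riesz--Thorin type) interpolation for the Kosaki/weighted $L^p$ family applies here, since $\|\cdot\|_{p,\rho}$ is the symmetric Kosaki $L^p$ norm and these spaces form a complex interpolation scale. Interpolate between $\|\Phi_s\|_{2\to r}\le1$ with $s=c\log(r-1)$ for large $r$, and the trivial bound $\|\Phi_s\|_{\infty\to\infty}\le 1$ (unital CP). Alternatively, use $\|\Phi_s\|_{p\to p}\le1$, which holds by interpolating $\infty\to\infty$ with its dual $1\to1$.
\end{itemize}

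\textbf{Main obstacle.} The second case is where I expect the real difficulty: naive interpolation exponents need not reproduce $\log\frac{q-1}{p-1}$ exactly. My first attempt would instead be the standard Gross route. Prove a log-Sobolev inequality with constant $\alpha\ge 1/(4c)$ by differentiating Proposition~\ref{prop:hyper1} near $q=2$; then Olkiewicz--Zegarlinski's Gross lemma for KMS-symmetric primitive semigroups gives $\|\Phi_t\|_{p\to q}\le1$ for $t\ge\frac{1}{4\alpha}\log\frac{q-1}{p-1}$.

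Here is the issue with that route. Proposition~\ref{prop:hyper1} is only a hypercontractive statement at time $t(q)=c\log(q-1)$, which vanishes at $q=2$ with derivative $c$. Differentiating $\|\Phi_{t(q)}X\|_{q,\rho}\le\|X\|_{2,\rho}$ at $q=2$ should yield $\Ent_2(X)\le 2c\,\mathcal E(X)$ type LSI, i.e.\ a constant comparable to $1/c$. The Gross lemma then gives time $\sim c\log\frac{q-1}{p-1}$ up to a factor $\le 2$; this factor two is exactly the slack available in the statement. The regime $p<2<q$ without the LSI detour is covered by Step 2 directly, and $p\le q\le2$ follows by duality from the case $2\le q'\le p'$.
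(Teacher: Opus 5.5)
Your duality step is correct, and so is your treatment of $p\le 2\le q$: composing $p\to 2$ with $2\to q$ costs total time $c\log\frac{q-1}{p-1}$. However, the case $2\le p\le q$ is not actually proved, and your case $1<p\le q\le 2$ depends on it through duality. You set up the right interpolation, then dropped it in favour of a Gross-lemma argument, and that argument does not work here.

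\textbf{Why the Gross route fails.} Differentiating Proposition~\ref{prop:hyper1} at $q=2$ yields only the $2$-LSI. The noncommutative Gross integration lemma of Olkiewicz--Zegarlinski, run along a curve from $p$ to $q$, needs the $r$-LSI for every $r$ between $p$ and $q$, each with the weighted power map $I_{r/(r-1),r}$ in $\mathcal{E}_{r,\cL}$. Deriving those from the $2$-LSI requires some form of $L_p$-regularity, for example a quantum Stroock--Varopoulos inequality. This is not available for general KMS-symmetric semigroups, and avoiding it is exactly the point of this section. Moreover, in this paper the bounds on $\alpha_p(\cL)$ are themselves derived from the theorem you are proving, so using them would be circular.

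\textbf{How to close the gap.} The obstacle you feared in the interpolation route is not real. You do not need to reproduce $\log\frac{q-1}{p-1}$ exactly; the factor-two slack you noticed is enough.
\begin{itemize}
\item Take $r=2q/p\ge 2$. Proposition~\ref{prop:hyper1} gives $\|\Phi_t\|_{2\to r}\le 1$ for $t\ge c\log(2q/p-1)$.
\item For the same fixed $t$, interpolate this on the Kosaki scale with $\|\Phi_t\|_{\infty\to\infty}\le 1$, taking $1-\theta=2/p$. The interpolated exponents are $1/p_\theta=(1-\theta)/2=1/p$ and $1/q_\theta=(1-\theta)/r=1/q$.
\item Since
\[
\frac{2q}{p}-1\le\Bigl(\frac{q}{p}\Bigr)^2\le\Bigl(\frac{q-1}{p-1}\Bigr)^2 ,
\]
the required time satisfies $c\log(2q/p-1)\le 2c\log\frac{q-1}{p-1}$, which is precisely the bound in the statement.
\end{itemize}
This is the paper's argument. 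With it in place, your duality step and your composition step complete the proof exactly as in the paper.
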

	\begin{proof}
		Suppose that $2\leq p \leq q$.
		Let $\displaystyle r=\frac{2q}{p}$.
		We have that $r\geq 2$.
		By Proposition \ref{prop:hyper1}, we have that 
		\begin{align*}
			\|\Phi_t(X)\|_{\frac{2q}{p}, \rho} \leq \|X\|_{2, \rho}.
		\end{align*}
		By taking the interpolation with $\|\Phi_t(X)\|_{\infty} \leq \|X\|_{\infty}$, we have that 
		\begin{align*}
			\|\Phi_t(X)\|_{q, \rho} \leq \|X\|_{p, \rho}
		\end{align*}
		whenever $\displaystyle t\geq \frac{2+\log s_-^{-1}}{4\lambda_{\KMS}}\log \left(\frac{2q}{p}-1\right)$.
		By noting that $\displaystyle \frac{2q}{p}-1 \leq \left(\frac{q-1}{p-1}\right)^2$, we have that when $\displaystyle t \geq \frac{2+\log s_-^{-1}}{2\lambda_{\KMS}}\log \frac{q-1}{p-1}$, the inequality \eqref{eq:hyper} is true.

        For $1<p\le q\le2$, weighted $L_p$ duality and KMS symmetry give
\[
    \|\Phi_t\|_{p\to q,\rho}
    =\|\Phi_t\|_{q'\to p',\rho}.
\]
Since $2\le q'\le p'$ and
$(p'-1)/(q'-1)=(q-1)/(p-1)$, the preceding case applies.
		
		Suppose that $p\leq 2\leq q$.
		This follows from the combination of the case $p\leq 2$ and the case $2\leq q$.
		
		Finally, the hypercontractivity estimate \eqref{eq:hyper} holds for $1<p\leq q <\infty$.
	\end{proof}
	
	By using the hypercontractivity for primitive KMS-symmetric quantum Markov semigroups, we can obtain the logarithmic Sobolev inequalities.
	With the convention $0\log0=0$, define
	\begin{equation}\label{eq:entropy}
\begin{aligned}
\Ent_{p,\rho}(X)
&=\frac{1}{p}\Tr\left(
\left|D_\rho^{\frac{1}{2p}}XD_\rho^{\frac{1}{2p}}\right|^p
\left(\log\left|D_\rho^{\frac{1}{2p}}XD_\rho^{\frac{1}{2p}}\right|^p-\log D_\rho\right)
\right)\\
&\quad-\frac{1}{p}\|X\|_{p,\rho}^p\log\|X\|_{p,\rho}^p.
\end{aligned}
\end{equation}
	For $X\neq 0$, we have that 
	\begin{align*}
		\Ent_{p, \rho}(X)=\frac{\|X\|_{p, \rho}^p}{p} H \left( \left.\frac{|D_\rho^{\frac{1}{2p}}XD_\rho^{\frac{1}{2p}}|^p}{\|X\|_{p, \rho}^p} \right\| D_\rho \right).
	\end{align*}
	For $X>0$ and $1<p,q<\infty$, the weighted power map $I_{p,q}$ is
	\begin{align*}
		I_{p,q}(X) = D_{\rho}^{-\frac{1}{2p}}\left( D_{\rho}^{\frac{1}{2q}} X D_{\rho}^{\frac{1}{2q}}\right)^{q/p} D_{\rho}^{-\frac{1}{2p}}.
	\end{align*}
	For $1<p<\infty$, the $p$-Dirichlet form $\mathcal{E}_{p, \cL}$ with respect to $\cL$ is 
	\begin{align}\label{eq:pvariance}
		\mathcal{E}_{p, \cL}(X)=\frac{p}{2(p-1)}\left\langle I_{\frac{p}{p-1}, p}(X), \cL(X)\right\rangle_{\KMS,\rho}.
	\end{align}
	The logarithmic Sobolev inequality constant $\alpha_p(\cL)$ is defined to be
	\begin{align*}
		\alpha_p(\cL) =\inf_{0<X\notin \bC I} \frac{\mathcal{E}_{p, \cL}(X)}{\Ent_{p,\rho}(X)}.
	\end{align*}
	
	\begin{theorem}[Logarithmic Sobolev Inequality]\label{thm:lsi}
		Suppose that $\{\Phi_t\}_{t\geq 0}$ is a primitive KMS-symmetric quantum Markov semigroup.
		We have that for $1<p<\infty$, 
		\begin{align}
			\alpha_p(\cL)\geq  \frac{\lambda_{\KMS}}{\log s_-^{-1}+2}\geq \frac{h_\rho(\cL)^2s_-^2}{2s_+^2\|\cL\|_{\KMS, \rho}} \frac{1}{\log s_-^{-1}+2}.
		\end{align}
		Furthermore,
		\begin{align*}
			\alpha_2(\cL) \geq \frac{2\lambda_{\KMS}}{\log s_-^{-1}+2} \geq \frac{2 h_\rho(\cL)^2s_-^2}{2s_+^2\|\cL\|_{\KMS, \rho}} \frac{1}{\log s_-^{-1}+2}.
		\end{align*}
	\end{theorem}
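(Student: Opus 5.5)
The plan is to derive the logarithmic Sobolev inequality from the hypercontractivity theorem just proved, using Gross' differentiation argument in the weighted $L^p$ setting. The second inequality in each line then follows by inserting the lower bound on $\lambda_{\KMS}$ from Theorem~\ref{thm:cheeger}.

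Set $c=\frac{2+\log s_-^{-1}}{2\lambda_{\KMS}}$ and fix $1<p<\infty$ and $X>0$ with $X\notin\bC I$. Define $q(t)=1+(p-1)e^{t/c}$, so that $t=c\log\frac{q(t)-1}{p-1}$. The hypercontractivity theorem then gives $F(t):=\|\Phi_t(X)\|_{q(t),\rho}\le \|X\|_{p,\rho}=F(0)$ for all $t\ge0$, and hence $F'(0^+)\le 0$.

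The main computational step is to differentiate $F$ at $t=0$. The derivative of the weighted $p$-norm in the exponent is
\[
\frac{d}{dq}\|X\|_{q,\rho}=\frac{1}{q}\|X\|_{q,\rho}^{1-q}\Ent_{q,\rho}(X),
\]
with $\Ent$ as defined in \eqref{eq:entropy}. This follows by differentiating $\Tr\bigl((D_\rho^{1/2q}XD_\rho^{1/2q})^q\bigr)$ in $q$ for positive $X$, using the spectral calculus and $\frac{d}{dq}D_\rho^{1/2q}$, which produces the $-\log D_\rho$ term. The derivative in the argument is
\[
\frac{d}{dt}\|\Phi_t(X)\|_{p,\rho}\Big|_{t=0}=-\|X\|_{p,\rho}^{1-p}\bigl\langle I_{p',p}(X),\cL(X)\bigr\rangle_{\KMS,\rho},
\qquad p'=\frac{p}{p-1}.
\]
This is the duality pairing between $L^p$ and $L^{p'}$ in the KMS embedding. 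Here $I_{p',p}(X)$ is exactly the weighted derivative of the norm, which explains the definition of $I_{p,q}$.

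With $q'(0)=(p-1)/c$, the condition $F'(0)\le0$ gives
\[
\frac{p-1}{c\,p}\Ent_{p,\rho}(X)\le \langle I_{p',p}(X),\cL(X)\rangle_{\KMS,\rho}=\frac{2(p-1)}{p}\mathcal E_{p,\cL}(X).
\]
So $\Ent_{p,\rho}(X)\le 2c\,\mathcal E_{p,\cL}(X)$, that is,
\[
\alpha_p(\cL)\ge \frac{1}{2c}=\frac{\lambda_{\KMS}}{\log s_-^{-1}+2}.
\]
I expect the main obstacle to be the two derivative identities, especially the $q$-derivative of the weighted norm, where $D_\rho$ does not commute with $X$. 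I would handle it by writing $\|X\|_{q,\rho}^q=\Tr\bigl((D_\rho^{1/2q}XD_\rho^{1/2q})^q\bigr)$ and using $\frac{d}{dq}\Tr f(A_q)=\Tr\bigl(f'(A_q)\dot A_q\bigr)$ together with cyclicity. The two contributions $\frac12\log D_\rho$ from the left and right factors combine into the $-\log D_\rho$ entropy term. A further check is that the normalizations $\frac1p$ in $\Ent$ and $\frac{p}{2(p-1)}$ in $\mathcal E_p$ match, since this is what produces the stated constant.

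For $p=2$ I will use a sharper route. Proposition~\ref{prop:hyper1} gives $\|\Phi_t\|_{2\to q}\le1$ already for $t\ge\frac{2+\log s_-^{-1}}{4\lambda_{\KMS}}\log(q-1)$, which is half the time used above. Taking $p=2$ and $q(t)=1+e^{t/c'}$ with $c'=c/2$ and repeating the differentiation gives $\alpha_2(\cL)\ge \frac{1}{2c'}=\frac{2\lambda_{\KMS}}{\log s_-^{-1}+2}$. Finally, substituting $\lambda_{\KMS}\ge \frac{h_\rho(\cL)^2s_-^2}{2s_+^2\|\cL\|_{\KMS,\rho}}$ from Theorem~\ref{thm:cheeger} gives the Cheeger-type bounds in both lines.
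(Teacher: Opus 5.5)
Your proposal is correct and is the paper's proof: you differentiate the hypercontractivity bound at $t=0$ along $q(t)=1+(p-1)e^{\beta t}$ with $\beta=1/c=2\lambda_{\KMS}/(\log s_-^{-1}+2)$, and for $p=2$ you use the faster curve $q(t)=1+e^{2\beta t}$ allowed by Proposition~\ref{prop:hyper1}, followed by Theorem~\ref{thm:cheeger}. Your two derivative identities, for the $q$-derivative giving $\Ent_{q,\rho}$ and for the $t$-derivative giving the pairing with $I_{p',p}(X)$, fill in the computation that the paper compresses into one displayed line, and they produce the same constants.
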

	\begin{proof}
		Let $\displaystyle \beta=\frac{2\lambda_{\KMS}}{\log s_-^{-1}+2}$ and
		\begin{align*}
			q(t)=1+(p-1)e^{\beta t}.
		\end{align*}
		For $X>0$ and $1<p<\infty$, differentiating \eqref{eq:hyper} at $t=0$ yields
\[
\begin{aligned}
0&\geq\left.\frac{d}{dt}\log\|\Phi_t(X)\|_{q(t),\rho}\right|_{t=0}\\
&=\frac{p-1}{p\|X\|_{p,\rho}^p}
\left(\beta\Ent_{p,\rho}(X)-2\mathcal E_{p,\cL}(X)\right).
\end{aligned}
\]
Thus $\mathcal E_{p,\cL}(X)\geq(\beta/2)\Ent_{p,\rho}(X)$ and $\alpha_p(\cL)\geq\beta/2$.
For $p=2$, Proposition~\ref{prop:hyper1} allows the faster curve $q(t)=1+e^{2\beta t}$; the same calculation gives $\alpha_2(\cL)\geq\beta$.
The remaining bounds follow from Theorem~\ref{thm:cheeger}.
	\end{proof}

\begin{remark}
For a faithful density matrix $D$, set $X=D_\rho^{-1/2}DD_\rho^{-1/2}$. As $p\downarrow1$,
\[
    \mathcal E_{p,\mathcal L}(X)\longrightarrow
    \frac12\mathcal I_{\mathcal L}(D),
    \qquad
    \operatorname{Ent}_{p,\rho}(X)\longrightarrow H(D\|D_\rho).
\]
Thus Theorem \ref{thm:lsi} yields
\[
    \alpha_{\mathrm{MLSI}}(\mathcal L)
    \ge \frac{\lambda_{\mathrm{KMS}}}{2+\log s_-^{-1}}.
\]
This provides an alternative lower bound of the MLSI constant for primitive KMS-symmetric quantum Markov semigroup. This gives an alternative quantitative proof of the MLSI established
in \cite{LWW26} for primitive KMS-symmetric quantum Markov semigroups.
\end{remark}
	
	\begin{remark}
		For the primitive KMS-symmetric quantum Markov semigroup described in Example~\ref{ex:graphham}, we have that $\lambda_{\KMS}=3-\sqrt{4+3\theta^2}\geq3-\sqrt{7}$ and
		\begin{align*}
			\alpha_p(\cL) \geq \frac{3-\sqrt{4+3\theta^2}}{2+\log14}.
		\end{align*}
	\end{remark}
	
	\begin{proposition}
		Suppose that $\{\Phi_t\}_{t\geq0}$ is the graph-based KMS-symmetric quantum Markov semigroup of Section~\ref{sec:graph}, associated with a connected graph $\cG$ on $n\geq3$ vertices.
		Then for $1<p<\infty$, the logarithmic Sobolev constant satisfies
		\begin{align*}
			\alpha_p(\cL) \geq \frac{1}{\log s_-^{-1}+2}\min\left\{ \frac{h_{\rho, \cG}^2}{2d_{\max}}, \frac{s_-}{2s_+} h_{\rho, \cG}\right\}. 
		\end{align*}
	\end{proposition}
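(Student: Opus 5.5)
The plan is to chain two earlier results: Theorem~\ref{thm:lsi}, which gives $\alpha_p(\cL)\ge \lambda_{\KMS}/(\log s_-^{-1}+2)$ for any primitive KMS-symmetric semigroup, and Theorem~\ref{thm:graphcheeger}, which bounds $\lambda_{\KMS}$ below by the graph conductance when $n\ge3$. The only preliminary work is to confirm that the graph-based semigroup of Section~\ref{sec:graph} satisfies the hypotheses of Theorem~\ref{thm:lsi}.

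First I check that the semigroup is a primitive KMS-symmetric quantum Markov semigroup. KMS symmetry of the generator \eqref{eq:mnlind1} is the special case of Lemma~\ref{lem:graphhambase} with $T=I$. In that case $B_I$ is diagonal, so $H_I=0$ and $\cL_I=\cL_0$; after absorbing the weights $\gamma_{jk}$ into the $V_{jk}$, the operator $\cL_0$ is exactly the generator \eqref{eq:mnlind1}. Primitivity follows from the argument in the same lemma. If $\cL(X)=0$, then $X$ commutes with all $V_{jk}$ and $V_{jk}^*$. Because $\cG$ is connected and $n\ge3$, some pair of adjacent edges exists, and these matrices generate $M_n(\bC)$. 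Hence $X\in\bC I$.

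Next I apply Theorem~\ref{thm:lsi} to get
\[
\alpha_p(\cL)\ge \frac{\lambda_{\KMS}}{\log s_-^{-1}+2}, \qquad 1<p<\infty .
\]

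Then I insert the lower bound from Theorem~\ref{thm:graphcheeger}, valid for $n\ge3$:
\[
\lambda_{\KMS}\ge \min\left\{\frac{h_{\rho,\cG}^2}{2d_{\max}},\ \frac{s_-}{2s_+}h_{\rho,\cG}\right\}.
\]
Since $\cG$ is connected, it has at least one edge. Therefore $d_{\max}>0$, and the edgeless convention in that theorem is never needed. Combining the two displays gives exactly the stated bound.

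I do not expect any real obstacle. The one step needing care is confirming primitivity, since Theorem~\ref{thm:lsi} requires it. This is why the hypotheses of connectedness and $n\ge3$ appear in the statement.
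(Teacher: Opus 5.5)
Your proposal is correct and matches the paper's proof, which likewise just combines Theorem~\ref{thm:lsi} with the $n\geq 3$ lower bound on $\lambda_{\KMS}$ from Theorem~\ref{thm:graphcheeger}. Your explicit check of KMS symmetry and primitivity, via Lemma~\ref{lem:graphhambase} with $T=I$, supplies a hypothesis of Theorem~\ref{thm:lsi} that the paper leaves implicit; once KMS symmetry is known, primitivity also follows because the lower bound on $\lambda_{\KMS}$ is strictly positive for a connected graph.
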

	\begin{proof}
		This follows from Theorems~\ref{thm:graphcheeger} and~\ref{thm:lsi}.
	\end{proof}
	
	\begin{remark}\label{rem:lsifailure}
		The primitivity of quantum Markov semigroups does not guarantee the logarithmic Sobolev inequalities. 
		We shall test the example described in \cite[Section 4]{LWW26}.
		Let
		\begin{align*}
			P=\begin{pmatrix}
				0 & 1 \\ 1 & 0
			\end{pmatrix}, 
			\quad 
			Q=\begin{pmatrix}
				0 & -i \\ i & 0
			\end{pmatrix},
			\quad
			W=\begin{pmatrix}
				1 & 0 \\ 0 & -1
			\end{pmatrix}
		\end{align*}
		be the Pauli matrices in $M_2(\bC)$ and $\displaystyle \rho=\frac{1}{2}\Tr$.
		For any $X\in M_2(\bC)$, we define
		\begin{align}\label{eq:m2lind0}
			\cL(X)=i [P,X]+X-WXW.
		\end{align}
		Let $\Phi_t=e^{-t\cL}$.
		Then it is a primitive quantum Markov semigroup but not KMS-symmetric as shown in \cite{LWW26}.
		Suppose that $0<\varepsilon<1$ and $\displaystyle D_{\varepsilon}=\frac{1}{2}(I+\varepsilon W)$.
		We have that $\cL(D_\varepsilon)=\varepsilon Q$.
		By a direct computation, we have that for $1<p < \infty$,
		\begin{align*}
			\Tr(D_\varepsilon^{p-1}\cL(D_\varepsilon))=0.
		\end{align*}
		This implies that $\mathcal{E}_{p, \cL}(D_\varepsilon)=0$ for $1<p<\infty$.
		On the other hand, we have that
		\begin{align*}
			\Ent_{p,\rho}(D_\varepsilon)
&=\frac{1}{2^{p+1}p}\Biggl[
(1+\varepsilon)^p\log\frac{2(1+\varepsilon)^p}{(1+\varepsilon)^p+(1-\varepsilon)^p}\\
&\qquad\qquad+(1-\varepsilon)^p\log\frac{2(1-\varepsilon)^p}{(1+\varepsilon)^p+(1-\varepsilon)^p}
\Biggr]>0.
		\end{align*}
		Since $\Phi_t$ is contractive on $L_p(\rho)$,
$\mathcal E_{p,\mathcal L}$ is nonnegative.
Hence $\alpha_p(\mathcal L)=0$ for $1<p< \infty$.
		The failure of the modified logarithmic Sobolev inequality for the same example is established in \cite{LWW26}.
	\end{remark}

	\begin{remark}
		The primitivity of quantum Markov semigroups does not guarantee the hypercontractivity either.   
		We still take the example in Remark \ref{rem:lsifailure}.
		For any $X=aI +xP+yQ+zW\in M_2(\bC)$ with $a, x, y, z\in \bC$, we have that
		\begin{align*}
			\Phi_t(X)=aI+ (P, Q, W) \bfM (x, y, z)^{\mathsf{T}},
		\end{align*}
		where $\bfM$ is a $3\times 3$ real matrix as follows:
		\begin{align*}
			\bfM= \begin{pmatrix}
				e^{-2t} & 0 & 0 \\
				0 & e^{-t} \left(\cos(\sqrt{3} t)-\frac{\sqrt{3}}{3}\sin(\sqrt{3}t)\right) &  \frac{-2\sqrt{3}}{3}e^{-t}\sin(\sqrt{3}t) \\
				0& \frac{2\sqrt{3}}{3}e^{-t}\sin(\sqrt{3}t) & e^{-t} \left(\cos(\sqrt{3} t)+\frac{\sqrt{3}}{3}\sin(\sqrt{3}t)\right)
			\end{pmatrix} 
		\end{align*}
		The largest singular value of $\bfM$ is 
		\begin{align*}
			e^{-t}\exp\left(\mathrm{arsinh}\left(\frac{|\sin(\sqrt{3}t)|}{\sqrt{3}}\right)\right)
		\end{align*}
		where $\mathrm{arsinh}(s)=\log\left(s+\sqrt{1+s^2}\right), s\in \bR$.
		Let 
		\begin{align*}
			\alpha(t)=t-\mathrm{arsinh}\left(\frac{|\sin(\sqrt{3}t)|}{\sqrt{3}}\right).
		\end{align*}
		For $1<p\leq q<\infty$, the condition $\|\Phi_t\|_{p\to q}=1$ necessarily implies
\[
q\leq 1+(p-1)e^{2\alpha(t)},
\]
where
		\begin{align*}
			\|\Phi_t\|_{p\to q} =\sup_{X\neq 0}\frac{\|\Phi_t(X)\|_{q, \rho}}{\|X\|_{p,\rho}}.
		\end{align*}
		Indeed, for any nonzero traceless self-adjoint $A$, expansion at the identity gives
\[
\|I+\varepsilon A\|_{p,\rho}
=1+\frac{p-1}{2}\varepsilon^2\rho(A^2)+O(\varepsilon^3).
\]
Applying the contraction inequality to $I+\varepsilon A$ and taking a direction attaining the largest singular value of $\bfM$ yields
$(q-1)e^{-2\alpha(t)}\leq p-1$.
Since $\displaystyle \alpha(t)=\frac23t^3+O(t^5)$ as $t\downarrow0$, we have $\displaystyle \frac{\alpha(t)}{t}\to0$. Consequently, no finite constant $\beta>0$ can guarantee
$\|\Phi_t\|_{p\to q}=1$ for all $1<p\leq q<\infty$ whenever
$\displaystyle t\geq\beta\log\frac{q-1}{p-1}$.
This rules out a uniform positive-rate exponential hypercontractivity curve starting at $t=0$; it does not rule out $p$-to-$q$ contraction at an individual positive time.
	\end{remark}
	
	
		
	
	\bibliographystyle{abbrv}
	\bibliography{kms}

@article{Alon86,
    author = {N. Alon},
    title ={Eigenvalues and expanders} ,
    journal = {Combinatorica},
    year = {1986},
    volume={6},
    page={86-96},
}

@article{BarRou22,
    author = {I. Bardet and C. Rouz\'{e}},
    title = {Hypercontractivity and Logarithmic {S}obolev Inequality for Non-primitive Quantum Markov Semigroups and Estimation of Decoherence Rates},
    journal ={Ann. Henri Poincar\'{e}} ,
    year = {2022},
    volume={23}, 
    page={3839–3903}
}

@article{BDR20,
    author ={Beigi, S. and Datta, N. and Rouzé, C. } ,
    title = {Quantum Reverse Hypercontractivity: Its Tensorization and Application to Strong Converses.},
    journal = {Commun. Math. Phys. },
    year = {2020},
    volume={376},
    page={753–794},
}

@article{BGJ22,
title = {Complete logarithmic {S}obolev inequalities via {R}icci curvature bounded below},
journal = {Advances in Mathematics},
volume = {394},
pages = {108129},
year = {2022},
issn = {0001-8708},
author = {M. Brannan and L. Gao and M. Junge}
}

@article{BGJ23,
      title={Complete Logarithmic {S}obolev Inequalities via {R}icci Curvature Bounded Below {II}}, 
      author={M. Brannan and L. Gao and M. Junge},
      year={2023},
      journal = {Journal of Topology and Analysis},
      volume={15},
      page={741-794},
}

@article{CarMaa17,
author = {Eric A. Carlen and Jan Maas},
title = {{Gradient flow and entropy inequalities for quantum Markov semigroups with detailed balance}},
volume = {273},
journal = {Journal of Functional Analysis},
pages = {1810-1869},
year = {2017},
}

@article{CarMar15,
    author = {Carbone, R. and Martinelli, A.},
    title = {Logarithmic Sobolev inequalities in non-commutative algebras.},
    journal ={Infin. Dimens. Anal. Quantum Probab. Relat. Top.} ,
    year = {2015},
    volume={18},
    number={2},
    page={1550011}
}

@article{Cheeger70,
    author = {J. Cheeger},
    title = {A lower bound for the lowest eigenvalue of the Laplacian},
    journal ={Problems in Analysis: A Symposium in Honor of S. Bochner (R. C. Gunning, ed.), Princeton Univ. Press, Princeton, N.J.} ,
    year = {1970},
    page={195-199},
}

@article{GKS76,
    author ={V. Gorini and A. Kossakowski and E. C. G. Sudarshan} ,
    title = {Completely positive dynamical semigroups of $N$-level systems},
    volume={17},
    page={821},
    journal = {J. Math. Phys},
    year = {1976},
}

@article{FagUma10,
    author ={F. Fagnola and V. Umanit\'{a}} ,
    title ={Generators of {KMS} Symmetric {M}arkov Semigroups on {B(H)}: Symmetry and Quantum Detailed Balance} ,
    journal = {Commun. Math. Phys.},
    volume={ 298},
    pages={523--547},
    year = 2010,
}

@article{GaoRou22,
    author = {Gao, Li and Rouz\'{e}, Cambyse},
    title = {Complete Entropic Inequalities for Quantum {Markov} Chains},
    journal = {Archive for Rational Mechanics and Analysis},
    number={1},
    volume={245},
    year = {2022}
}

@article{GJLL25,
      title = {Complete positivity order and relative entropy decay},
      author = {L. Gao and M. Junge and N. LaRacuente and H. Li},
      year = {2025},
      journal = {Forum of Mathematics, Sigma},
      volume = {13},
      pages = {e31},
      doi = {10.1017/fms.2024.117},
}

@article{KasTem13,
    author = {M. J. Kastoryano and K. Temme},
    title = {Quantum logarithmic Sobolev inequalities and rapid mixing},
    journal ={Journal of Mathematical Physics} ,
    year = {2013},
    volume={54},
    number={5},
}

@Article{KFGV77,
  author       = {A. Kossakowski and A. Frigerio and V. Gorini and M. Verri},
  journal = {Commun. Math. Phys.},
  title        = {Quantum detailed balance and {KMS} conditions},
  doi          = { },
  number       = {},
  pages        = {97--110},
  url          = {},
  volume       = {57},
year         = {1977},
}

@article{Lin76,
  author    = {Göran Lindblad},
  title     = {{O}n the Generators of Quantum Dynamical Semigroups},
  journal   = {Commun. Math. Phys.},
  volume    = {48},
  pages     = {119--130},
  year      = {1976},
  doi       = {10.1007/BF01608499},
  url       = {https://doi.org/10.1007/BF01608499}
}

@article{LawSok88,
 author = {Gregory F. Lawler and Alan D. Sokal},
 journal = {Transactions of the American Mathematical Society},
 number = {2},
 pages = {557--580},
 publisher = {American Mathematical Society},
 title = {Bounds on the $L^2$ Spectrum for Markov Chains and Markov Processes: A Generalization of {C}heeger's Inequality},
 volume = {309},
 year = {1988}
}

@article{OlkZeg99,
    author ={Olkiewicz, R. and Zegarlinski, B.} ,
    title = {Hypercontractivity in noncommutative $L_p$ spaces},
    journal ={J. Funct. Anal.} ,
    year = {1999},
    volume={161}, 
    number={1},
    page={246-285}
}

@article{RicXu16,
author = {{\'E}ric Ricard and Quanhua Xu},
title = {{A noncommutative martingale convexity inequality}},
volume = {44},
journal = {The Annals of Probability},
number = {2},
publisher = {Institute of Mathematical Statistics},
pages = {867 -- 882},
year = {2016},
doi = {10.1214/14-AOP990},
URL = {https://doi.org/10.1214/14-AOP990}
}

@article{TKR10,
    author = {K. Temme and M. J. Kastoryano and M. B. Ruskai and M. M.
Wolf and F. Verstraete},
    title ={The $\chi^2$-Divergence and Mixing Times of Quantum Markov Processes} ,
    journal ={Journal of Mathematical Physics } ,
    year = {2010},
    page={122201},
    volume={51},
}

@article{TPK14,
    author ={Temme, K. and Pastawski, F. and Kastoryano, M.J.:},
    title ={Hypercontractivity of quasi-free quantum semigroups} ,
    journal = {Journal of Physics A: Mathematical and Theoretical},
    year = {2014},
    volume={47},
    page={405303}
}

@article{VerWir23,
    author ={M. Vernooij and M. Wirth} ,
    title ={Derivations and {KMS}-Symmetric Quantum {M}arkov Semigroups} ,
    journal ={Commun. Math. Phys.} ,
volume={403},
pages={381-416},
    year = {2023},
}

@article{Wir26,
    author = {M. Wirth},
    title = {The {KMS} and {GNS} Spectral Gap of Quantum {Markov} Semigroups},
    journal = {ArXiv:2604.21630v1},
    year = {2026}
}

@article{LWW26,
      author      = {Zhengwei Liu and Jincheng Wan and Jinsong Wu},
      journal     = {ArXiv:2609.13726},
      number      = {},
      title       = {Some entropic inequalities for primitive {KMS}-symmetric Quantum {Markov} Semigroups}, 
      volume      = {},
      year        = {2026},
}

@article{MFOSM2013,
       author     = {Müller-Lennert, Martin and Dupuis, Frédéric and Szehr, Oleg and Fehr, Serge and Tomamichel, Marco},
       journal    = {Journal of Mathematical Physics},
       number     = {12},
       title      = {On quantum Rényi entropies: A new generalization and some properties},
       volume     = {54},
       year       = {2013},
}
	
\end{document}